\theoremstyle{plain}
\newtheorem{theorem}{Théorème}[section]
\newtheorem{corollary}[theorem]{Corollaire}
\newtheorem{proposition}[theorem]{Proposition}
\theoremstyle{definition}
\newtheorem{definition}[theorem]{Définition}
\newtheorem*{remark}{Remarque}
\newtheorem{conjecture}[theorem]{Conjecture}
\newcommand{\EE}{\mathbb E}
\newcommand\N{{\mathbb N}}
\newcommand\Z{{\mathbb Z}}
\newcommand\R{{\mathbb R}}
\newcommand\DD{{\mathbb D}}
\newcommand{\B}{{\mathcal B}}
\newcommand{\UU}{\sum \limits_{n=1}^N}
\newcommand {\eps}{\varepsilon}
\newcommand {\FAIBLE} {\text{faible}}
\newcommand {\DIM}{\text{dim}}
\newcommand{\demi}{{\nicefrac{1}{2}}}
\newcommand{\err}{\mathrm{R}}
\newcommand {\RADcv}{\text{\rm Rad}}
\newcommand {\GAUSScv} {\gamma}
\newcommand {\RADbdd}{\text{\rm Rad}{}_\infty}
\newcommand {\GAUSSbdd} {\gamma_\infty}
\newcommand {\SUCHTHAT} {\,:\;}
\newcommand {\defequal} {\; \overset{\text{def}}{=\joinrel=} \;}
\newcommand {\norm}[1] {\| #1 \|}               
\newcommand {\bignorm}[1]{\bigl\| #1 \bigr\|}
\newcommand {\Bignorm}[1]{\Bigl\| #1 \Bigr\|}
\newcommand {\ELL} {{\mathcal L}}
\newcommand {\BOUNDED} {{\mathcal L}}
\newcommand {\LIN} {\text{\rm Lin}}
\newcommand {\HS} {\mathrm{S}_2}
\newcommand {\PHS} {\mathrm{PS}_2}
\newcommand {\ESPACEHS} {\mathrm{HS}}
\newcounter{aufzi}
\newenvironment{aufzi}{\begin{list}{ {\upshape\alph{aufzi})}}{
        \usecounter{aufzi}
        \topsep1ex
        \parsep0cm
        \itemsep1ex
        \leftmargin0.8cm
        \labelwidth0.5cm
        \labelsep0.3cm
}}
{\end{list}}
\newcounter{aufzii}
\newenvironment{aufzii}{\begin{list}{\hfill {\upshape(\roman{aufzii})}}{
        \usecounter{aufzii}
        \topsep1ex
        \parsep0cm
        \itemsep1ex
        \leftmargin0.8cm
        \labelwidth0.5cm
        \labelsep0.3cm
         \itemindent0cm
}}
{\end{list}}
\begin{document}

\title[Extensions Banachiques d'opérateurs de Hilbert-Schmidt]{Sur quelques extensions au cadre Banachique de la notion d'opérateur de Hilbert-Schmidt}
\date{\today}
 
\author[S.A. Abdillah]{Said Amana Abdillah} 
\address{Université des Comores\\Rue de la Corniche\\B.P2585 Moroni (Comores)}
\email{intissoir2002@hotmail.fr}

\author[J. Esterle]{Jean Esterle}
\address{Institut de Mathématiques de Bordeaux\\Université Bordeaux 1\\351, cours de la Libération\\33405 Talence CEDEX\\FRANCE}
\email{esterle@math.u-bordeaux.fr}
\author[B.H. Haak]{Bernhard H. Haak}
\address{Institut de Mathématiques de Bordeaux\\Université Bordeaux 1\\351, cours de la Libération\\33405 Talence CEDEX\\FRANCE}
\email{bernhard.haak@math.u-bordeaux.fr}

\begin{abstract}
  Le but de cet article est de faire le point sur diverses
  possibilités connues pour étendre au cadre Banachique la notion
  d'opérateur de Hilbert-Schmidt: opérateurs $p$-sommants,
  $\gamma$-sommants ou $\gamma$-radonifiants, opérateurs faiblement
  $*1$-nucléaires et classes d'opérateurs définies par des
  propriétés de factorisation. On introduit la classe $\PHS(E; F)$
  des opérateurs pré-Hilbert-Schmidt comme étant la classe des
  opérateurs $u:E\to F$ tels que $w\circ u \circ v$ soit
  Hilbert-Schmidt pour tout opérateur borné $v: H_1\to E$ et tout
  opérateur borné $w:F\to H_2$, $H_1$ et $H_2$ désignant des
  espaces de Hilbert quelconque. Hormis le cas trivial où l'un des
  deux espaces $E$ ou $F$ est un "espace de Hilbert-Schmidt", cet
  espace ne semble avoir été décrit que dans le cas banal où
  l'un des deux espaces $E$ et $F$ est un espace de Hilbert.

\bigskip

\noindent {\sc Mots clés}: Espaces de Banach, opérateurs de Hilbert-Schmidt,
opérateurs $p$-sommants, opérateurs presque sommants, opérateurs
$\gamma$-sommants, opérateurs $\gamma$-radonifiants, inégalité
de Grothendieck.

\bigskip

\noindent {\sc Abstract}:  In this work we discuss several ways to extend to the
context of Banach spaces the notion of Hilbert-Schmidt operators:
$p$-summing operators, $\gamma$-summing or $\gamma$-radonifying
operators, weakly $*1$-nuclear operators and classes of operators
defined via factorization properties. We introduce the class
$\PHS(E; F)$ of pre-Hilbert-Schmidt operators as the class of all
operators $u:E\to F$ such that $w\circ u \circ v$ is Hilbert-Schmidt
for every bounded operator $v: H_1\to E$ and every bounded operator
$w:F\to H_2$, where $H_1$ et $H_2$ are Hilbert spaces.  Besides the
trivial case where one of the spaces $E$ or $F$ is a "Hilbert-Schmidt
space", this space seems to have been described only in the easy
situation where one of the spaces $E$ or $F$ is a Hilbert space.

\bigskip

\noindent {\sc Keywords}: Banach spaces, Hilbert-Schmidt operators, $p$-summing
operators, almost summing operators, $\gamma$-summing operators,
$\gamma$-radonifying operators, Grothendieck's inequality.
\end{abstract}

\subjclass{47B10, 46C05, 46B25}

\maketitle

\section{Introduction} 


Le but de cet article, qui fait suite au travail entrepris par le
premier auteur dans sa thèse \cite{Abd}, est de faire le point sur
diverses possibilités d'étendre au cadre Banachique la notion
d'opérateur de Hilbert-Schmidt.

Soient $H_1$ et $H_2$ deux espaces de Hilbert. Les opérateurs de
Hilbert-Schmidt $T:H_1 \to H_2$ sont initialement définis par la
condition
\[
   \sum \limits_{n=1} ^{+\infty}\Vert Th_n\Vert^2<+\infty
\]
pour toute suite orthonormale $(h_n)_{n\ge 1}$ d'éléments de $H_1$,
ce qui est équivalent à la sommabilité de la famille $\Vert
Te_i\Vert^2_{i\in I}$ pour au moins une (donc pour toute) base
hilbertienne $(e_i)_{i \in I}$ de $H_1$.

Ces opérateurs admettent de nombreuses caractérisations bien
connues, par des conditions qui s'expriment naturellement dans le
cadre banachique: caractérisations par des propriétés de
factorisation, ou par l'action de l'opérateur induit sur des suites
vectorielles. On obtient ainsi d'une part les notions d'opérateurs
absolument sommants, $p$-sommants, presque-sommants,
$\gamma$-sommants, $\gamma$-radonifiants, et d'autre part les
opérateurs ayant certaines propriétés de factorisation.  Le tableau
suivant fait apparaître les propriétés d'inclusion
entre ces diverses classes d'opérateurs, qui coïncident
toutes dans le cas hilbertien. La classe la plus générale est
celle des opérateurs pré-Hilbert-Schmidt, c'est à dire la classe
des opérateurs linéaires $T:E \to F$ tels que $S\circ T\circ R:
H_1 \to H_2$ soit Hilbert-Schmidt pour tout opérateur linéaire
borné $R: H_1\to E$ et tout opérateur linéaire borné $S:F\to
H_2,$ $H_1$ et $H_2$ désignant des espaces de Hilbert.  En ce qui
concerne les propriétés de factorisation, rappelons qu'un
opérateur $T:E \to F$ est dit universellement factorisable si $T$
admet une factorisation à travers $G$ pour tout espace de Banach
$G$, et qu'un espace de Banach $L$ est appelé espace de
Hilbert-Schmidt si tout opérateur $T: H_1 \to H_2$ admettant une
factorisation à travers $L$ est un opérateur de Hilbert-Schmidt,
voir \cite{DJT}. Parmi les classes définies par des propriétés
de factorisation qui coïncident avec la classe des
opérateurs de Hilbert-Schmidt dans le cas hilbertien, la plus
restreinte est alors la classe des opérateurs universellement
factorisables et la plus générale est celle des opérateurs
admettant une factorisation de Hilbert-Schmidt. Le fait que les
espaces de type ${\mathcal L}_1$ ou ${\mathcal L}_{\infty}$ sont des
espaces de Hilbert-Schmidt est une conséquence de la célèbre
inégalité de Grothendieck, voir les sections 2 et 3.

\begin{center}
\begin{tikzpicture}[->,>=stealth',shorten >=1pt,auto, thick,main node/.style={circle,fill=blue!20,draw,font=\sffamily}]
  \node[main node, align=center, rectangle, draw=none,fill=none] (1) {Hilbert-\\Schmidt} ;
  \node[main node, align=center, rectangle, draw=none,fill=none] (2) [above left = 1.5cm and 3cm of 1] {universelle-\\ment\\factorisable} ;
  \node[main node, align=center, rectangle, draw=none,fill=none] (3) [above = 1cm of 1] {faiblement${}^*$\\nucléaire} ;
  \node[main node, align=center, rectangle, draw=none,fill=none] (4) [above right = 0.3cm and 3cm of 1] {$p$-sommant} ;
  \node[main node, align=center, rectangle, draw=none,fill=none] (5) [above left = 5cm and 2cm of 1] {${\mathcal L}_\infty$ factorisable} ;
  \node[main node, align=center, rectangle, draw=none,fill=none] (6) [above =4.3cm of 1] {${\mathcal L}_1$ factorisable} ;
  \node[main node, align=center, rectangle, draw=none,fill=none] (7) [above right = 1cm and 1cm of 5] {Hilbert-Schmidt\\factorisable} ;
  \node[main node, align=center, rectangle, draw=none,fill=none] (8) [above right = 8cm and 1cm of 1] {pré-Hilbert-Schmidt} ;
  \node[main node, align=center, rectangle, draw=none,fill=none] (9) [above = 1cm of 4] {$\gamma$-radonifiant} ;
  \node[main node, align=center, rectangle, draw=none,fill=none] (10)[above = 3cm of 4] {presque sommant} ;
  \node[main node, align=center, rectangle, draw=none,fill=none] (11)[above = 5cm of 4] {$\gamma$-sommant\\ $=$ \\$\err$-sommant} ;
  \node[draw=none, fill=none] (L)[above left  = 0.01cm and 5cm of 1] {} ;
  \node[draw=none, fill=none] (R)[above right =0.01cm and  6cm of 1] {} ;
  \node[draw=none, fill=none] (D)[right = 5.2cm of 1] {} ;
  \node[draw=none, fill=none] (U)[above right = 8cm and 5.2cm of 1] {} ;
  \node[main node, align=left, rectangle, draw=none,fill=none] (Hilbert) [right = 5.5cm of 1] {concept\\Hilbertien} ;
  \node[main node, align=left, rectangle, draw=none,fill=none] (Banach) [above right = 4cm and 5.5cm of 1] {concept\\Banachique} ;
  \path[every node/.style={font=\sffamily\small}]  
   (L) edge [dashed,-] node {} (R)
   (D) edge [dashed,-] node {} (U)
   (1) edge node {}  (2) 
       edge node {}  (5)
       edge node {}  (3)
       edge node {}  (4)
   (2) edge node {}  (5)
   (2) edge node {}  (6)
   (3) edge node {}  (6)
   (4) edge node {}  (9)
       edge [dashed] node [right] {\hspace*{0.5cm}si $p\le 2$} (5) 
   (5) edge node {}  (7)
   (6) edge node {}  (7)
   (7) edge node {}  (8)
   (9) edge node {} (10)
  (10) edge node {} (11)
  (11) edge node {}  (8)
   ;
\end{tikzpicture}
\end{center}

Dans la section~2 on rappelle les définitions de certains espaces de
suites classiques: suites faiblement $\ell p,$ suites Rademacher
sommables, Gauss sommables, et espaces de suites un peu plus
généraux pour les espaces de Banach contenant une copie de $c_0$
caractérisés par la bornitude de sommes partielles, et on
rappelles les inégalités de Khintchine, Kahane et Grothendieck.

Dans la section~3 on détaille les diverses caractérisations des
opérateurs de Hilbert-Schmidt évoquées plus haut, qui font
intervenir les diverses classes d'opérateurs introduites dans le
tableau ci-dessus, et on explique les inclusions et comparaisons
indiquées dans ce tableau. Ces caractérisations sont classiques,
à part sans doute l'identification entre opérateurs de
Hilbert-Schmidt et opérateurs faible$*$ 1-nucléaires, pour
laquelle nous n'avons pas trouvé de référence dans la
littérature.

Dans la section~4 nous rappelons les notions de type et cotype des
espaces de Banach, et le fait que la classe $\gamma(E; F)$ des
opérateurs $\gamma$-radonifiants de $E$ dans $F$ se réduit à la classe
$\Pi_2(E; F)$ des opérateurs $2$-sommants de $E$ dans $F$ si $F$ est
de cotype 2, et même à la classe $\Pi_1(E; F)$ des opérateurs
absolument sommants de $E$ dans $F$ si $E$ est en outre de type
fini. Par contre Linde et Pietsch ont observé dans \cite{lp} que si
$E=F=\ell_{\infty}$ alors $\Pi_q(E; F)$ contient strictement $\Pi_p(E;
F)$ pour $1\le p <q <+\infty$ et les inclusions $\cup_{p\ge 1}(E;
F)\subset \gamma(E; F)\subset \gamma^{\infty}(E; F)$ sont également
strictes. Nous évoquons aussi à la section 4 une confusion
gênante du chapitre~12 de \cite{DJT} entre la classe $\Pi_{ps}(E;F)$
des opérateurs presque-sommants et la classe $R^{\infty}(E;F)$ des
opérateurs Rademacher-sommants, alors que les théorèmes de
Hoffmann-J\o{}rgensen et Kwapie\'n \cite{HK}, \cite{Kwap} qui
garantissent l'égalité entre ces deux classes ne s'appliquent que
si $F$ ne contient aucun espace isomorphe à $c_0.$

A la question~5, après avoir rappelé que tout opérateur de
Hilbert-Schmidt est universellement factorisable et décrit diverses
caractérisations des espaces de Hilbert-Schmidt données dans
\cite{DJT}, on montre que la classe $PS_2(E;F)$ des opérateurs de
Hilbert-Schmidt de $E$ dans $F$ coïncide avec la classe
$\gamma(E;F)$ des opérateurs de $\gamma$-radonifiants de $E$ dans
$F$ si $F$ est de type 2, résultat qui semble nouveau. Des
résultats de Linde et Pietsch, complétés par des calculs de
Maurey mentionnés dans \cite{lp}, donnent une description complète
des classes $\gamma(\ell _p,\ell_q).$ On peut alors en déduire une
description complète des opérateurs pré-Hilbert-Schmidt de
$\ell_p$ dans $\ell_q$ pour $1\le p < 2$, $1 \le q <+\infty$ et pour
$2 \le p < +\infty,$ $2\le q <+\infty$ (il est clair d'autre part que
tout opérateur de $\ell_p$ dans $\ell_q$ est pré-Hilbert-Schmidt
si $\sup(p,q)=+\infty$).

Nous concluons l'article en conjecturant que tout opérateur
pré-Hilbert-Schmidt se factorise à travers un espace de
Hilbert-Schmidt. Il résulte du théorème de factorisation de
Pietsch que cette conjecture est vérifiée dans le cas particulier
des opérateurs $p$-sommants pour $1 \le p \le 2.$

\section{Quelques espaces de suites classiques}

 Les espaces de Banach considérés dans cet article
 sont des espaces de Banach réels. On notera $E $ et $F$ deux espaces
 de Banach et $H$ un espace de Hilbert. Le dual de $E$ sera noté $E^*$
 et la boule unité de $E$ sera notée $B_E$.  Si $u:E \to F$ est un
 opérateur linéaire continu, on définit son adjoint $u^*:F^* \to E^*$
 par la formule $\left\langle x,u^*(y^*)\right\rangle=\left\langle
   y^*,u(x)\right\rangle$, de sorte que
 $\left\|u^*\right\|=\left\|u\right\|$. On note $\BOUNDED(E; F)$
 l'espace des opérateurs bornés de $E$ dans $F$, et pour $x^*\in E^*$,
 $y \in F$ on définit $x^*{\otimes} y \in \BOUNDED(E; F)$ par la
 formule
\[
    (x^*{{\otimes}} y) \, x=\langle x,x^* \rangle y \ \ \ \ (x\in E).
\]
Pour $p > 1$ on note $p^*$ le conjugué de $p$, défini par la formule
$\frac{1}{p}+\frac{1}{p^*}=1$, avec la convention $p^*=+\infty$ si
$p=1$. Pour $p>1$ on note $(\ell_p(E), \Vert \cdot \Vert_p)$ l'espace de Banach
des suites absolument $p$-sommables d'éléments de $E$, on note 
$(\ell_{\infty}(E),\Vert \cdot \Vert)_{\infty}$  l'espace  des suites
bornées d'éléments de $E$, et on note $c_0(E)$ le sous-espace fermé de
$\ell_{\infty}(E)$ formé des suites qui convergent vers $0$. Lorsque
$E=\mathbb{R}$ ces espaces sont simplement notés $\ell_p, 
\ell_{\infty}$ et $c_0$; pour des familles $(x_i)_{i\in I}$, indexés
par $i \in I$ on indique si besoin est l'ensemble des indices: par exemple
$\ell_{p}(\Z,\R)$ désigne  les suites absolument $p$-sommables de réels indexées sur les entiers relatifs,
$\ell_{p}(\Z,  E)$ les suites absolument $p$-sommables d'éléments de $E$ indexées sur les entiers
relatifs, etc.

\begin{definition}\label{def:weak-lp}
  Soit $p \in [1, \infty]$. Une suite $(x_n)_{n\ge 1}$ d'un espace de
  Banach $E$ est appelée faiblement $p$-sommable si
\begin{equation}\label{eq:lp-faible}
  \left\|(x_n)_{n\ge 1}\right\|_{\ell_p^\FAIBLE(E)}    \defequal    \sup_{x^*\in B_{E^*}}\left(\sum_{n=1}^\infty\left|\left\langle x_n,x^*\right\rangle\right|^p\right)^{\nicefrac{1}{p}}
\end{equation}
est fini.
\end{definition}

\noindent L'espace vectoriel des suites d'éléments de $E$ qui sont
faiblement $p$-sommables sera noté $\ell_p^\FAIBLE(E)$. Muni de la
norme~(\ref{eq:lp-faible}), $\ell_p^\FAIBLE(E)$ est un espace de
Banach, voir par exemple \cite[page 32]{DJT}. Rappelons qu'une partie
$F\subset E^*$ est {\it normante} quand $\left \Vert x \right \Vert
=\sup_{x^*\in F}\left | \langle x,x^* \rangle \right |$ pour tout $x
\in E$. Si $F$ est un sous-ensemble normant de $E^*$, on vérifie
facilement que l'on a
\begin{equation} \label{lp-faible-via-normant}
  \Vert (x_n)_{n\ge 1}\Vert_{\ell_p^\FAIBLE(E)}=\sup_{x^*\in F}  \Bigl(\sum_{n=1}^\infty \left | \langle  x_n,x^* \rangle \Bigr | ^p\right )^{\nicefrac{1}{p}}.
\end{equation}


\noindent 
\begin{definition}
  Une série $\sum_n x_n$ dans $E$ est appelée {\it commutativement
    convergente} si la série $\sum_n x_{\sigma(n)}$ converge pour
  toute permutation $\sigma:\N \to \N$.
\end{definition}

\noindent La convergence commutative équivaut à la convergence de la
série $\sum_n \epsilon_n x_n$ pour toute suite $(\epsilon_n)_{n\ge
  1}\in \{-1,1\}^{\N}$, voir par exemple \cite[Théorème 1.9]{DJT}. Dans
ce cas
\begin{equation}  \label{eq:omnibus-thm-uncond-summable}
   \sup_{\eps_n = \pm 1}  \Bignorm{ \sum_n \epsilon_n x_n } 
\end{equation}
est fini (ibidem). Observons que pour tout $N \ge 1$, on a
\begin{align*}
 \sup_{\eps_n = \pm 1}  \Bignorm{ \sum_{n=1}^N \epsilon_n x_n } 
= & \;  \sup_{\eps_n = \pm 1}  \sup_{\norm{x^*}\le 1 } \sum_{n=1}^N  \epsilon_n \langle x_n, x^* \rangle  \\
= & \;  \sup_{\norm{x^*}\le 1 } \sup_{\eps_n = \pm 1} \sum_{n=1}^N  \epsilon_n \langle x_n, x^* \rangle  
= \sup_{\norm{x^*}\le 1 } \sum_{n=1}^N  \bigl| \langle x_n, x^* \rangle \bigr|.
\end{align*}
Ainsi, la quantité (\ref{eq:omnibus-thm-uncond-summable}) n'est rien que la norme
$\ell_1^\FAIBLE(E)$ de la suite $(x_n)$.

\bigskip

\noindent On va maintenant introduire les suites Gauss-sommables et
Rademacher-sommables. Dans la suite de
l'article, on notera $(\gamma_n)_{n\ge 1}$ une suite de variables 
gaussiennes indépendantes, et $(r_n)_{n\ge 1}$ une suite de variables
Rademacher indépendantes, c'est à dire une suite de variables
aléatoires indépendantes prenant les valeurs $\pm 1$ avec la
probabilité $\demi$. Une construction explicite d'une telle suite est
donné par $r_n(t) = \text{sign} \bigl( \sin(2^n \pi t)\bigr)$ sur
$\Omega=[0,1]$.

\begin{definition}\label{def:RAD}
  Une suite $(x_n)_{n\ge 1}$ d'un espace de Banach $E$ est appelée
  Rademacher-sommable, et on note  $(x_n) \in \RADcv(E)$, si la série $\sum
  r_n x_n$ converge dans $\ELL_2(\Omega; E)$. Dans ce cas on pose
\[
   \norm{  (x_n)_n }_{\RADcv(E)} \defequal   \Bigl( \EE \Bignorm{ \sum_{n} r_n x_n }^2 \Bigr)^\demi.
\]
\end{definition}

\noindent Rappelons que si $X$ et $Y$ sont des variables aléatoires
indépendants et symétriques, alors $\EE \norm{ X }^2 \le \EE \norm{ X +
  Y}^2$. Ainsi, les (deuxièmes) moments des sommes partielles de la
série $\sum r_n x_n$ sont croissants. Cette observation amène à
étudier l'espace $\RADbdd(E)$ des suites $(x_n)$ d'éléments de $E$ dont les sommes partielles sont
uniformément bornées en norme $\ELL_2$:
\[
   \norm{  (x_n)_n }_{\RADbdd(E)} \defequal  \sup_{N \ge 1} \Bigl( \EE \Bignorm{ \sum_{n = 1}^N r_n x_n }^2 \Bigr)^\demi
\]
La base canonique de $c_0$ montre que la bornitude uniforme des sommes
partielles n'implique pas la convergence; en effet on voit dans cet
exemple que $\EE \bignorm{ \sum_{n=N}^M r_n e_n }^2 = 1$, les sommes
partielles ne forment donc pas une suite de Cauchy. 

\begin{definition}\label{def:GAUSS}
  Une suite $(x_n)_{n\ge 1}$ d'un espace de Banach $E$ est appelée
  Gauss-sommable, et on note $(x_n) \in \GAUSScv(E)$, si la série $\sum
  \gamma_n x_n$ converge dans $\ELL_2(\Omega; E)$. Dans ce cas on pose
\[
   \norm{  (x_n)_n }_{\GAUSScv(E)} \defequal   \Bigl( \EE \Bignorm{ \sum_{n} \gamma_n x_n }^2 \Bigr)^\demi.
\]
\end{definition}

\noindent De manière analogue on considère aussi l'espace $\GAUSSbdd(E)$ des suites $(x_n)_{n\ge 1}$ d'éléments de $E$ qui satisfont
\[
   \norm{  (x_n)_n }_{\GAUSSbdd(E)} \defequal  \sup_{N\ge 1} \Bigl( \EE \Bignorm{ \sum_{n=1}^N \gamma_n x_n }^2 \Bigr)^\demi < +\infty,
\]
ce qui n'entraîne pas nécessairement la convergence de la série $\sum_{n=1}^{+\infty}\gamma_nx_n.$  On reviendra sur la distinction
entre convergence et bornitude uniforme des sommes partielles plus
loin; notons simplement que l'espace $c_0$ joue un rôle crucial, comme le
montre le théorème de Hoffmann-J\o{}rgensen et Kwapień, \cite{HK}.

\noindent Les espaces de suites $\GAUSSbdd(E)$ et $\RADbdd(E)$ sont des espaces de Banach. 
 En effet, toute suite de Cauchy
 $(x^{(n)})$ de $\GAUSSbdd(E)$ est bornée. Appelons donc $C>0$ le supremum
 des normes et notons $x^{(n)} = (x^{(n)}_k)_{k\ge 1}$. La suite
 $(x^{(n)}_k)_{n\ge 1}$ est de Cauchy et converge donc pour
 $n\to+\infty$ vers un $x_k$. On conclut avec le lemme de Fatou que pour
 tout ensemble fini $K\subset \N$
 \[
   \EE \Bignorm{ \sum_{k \in K} \gamma_k x_k  }^2 \le \liminf_n \EE \Bignorm{ \sum_{k \in K} \gamma_k x^{(n)}_k  }^2 \le C.
 \]
 Ainsi, $x = (x_k) \in \GAUSSbdd(E)$. Une autre application du lemme de
 Fatou donne facilement la convergence dans la norme de $\GAUSSbdd(E)$.
 En remplaçant $\gamma_n$ par $r_n$ la même preuve donne la complétude
 de $\RADbdd(E)$.  
\noindent  Cette propriété se transmet à leurs sous-espaces fermés $\GAUSScv(E)$ et
$\RADcv(E)$.

\bigskip

Nous terminons cette partie en rappelant trois inégalités classiques;
pour leurs démonstrations on renvoie par exemple à \cite[Théorème 1.10, 1.14 et
11.1]{DJT}.

\begin{theorem}[Inégalités de Khintchine]\label{thm:hinchin}
  Pour tout $0 < p < +\infty$, il existe des constantes positives
  $A_p$ et $B_p$ telles que pour toute suite $\bigl(a_n\bigr)_{n\ge
    1}\in\ell_2$, on ait :
\[
A_p\Bigl(\sum \limits_ {n=1}^{+\infty} \bigl|a_n\bigr|^2\Bigr)^\demi
\leq
\Bigl(\int_0^1 \Bigl|\sum _n  a_n r_n(t) \Bigr|^p dt \Bigr)^{\nicefrac{1}{p}}
\leq
B_p\Bigl(\sum \limits_ {n=1}^{+\infty} \bigl|a_n\bigr|^2\Bigr)^\demi.
\]  
\end{theorem}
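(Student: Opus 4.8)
Le plan est de se ramener d'abord aux sommes finies. On fixe $N$, on pose $S = \sum_{n=1}^N a_n r_n$, et par homogénéité on peut supposer $\sum_{n=1}^N a_n^2 = 1$ ; le cas général des séries s'obtiendra ensuite par passage à la limite $N\to+\infty$. Le cas $p=2$ est immédiat : l'orthonormalité des $r_n$ dans $\ELL_2(\Om)$ (on a $\EE[r_m r_n] = 0$ pour $m\ne n$ par indépendance, et $\EE[r_n^2]=1$) donne $\EE|S|^2 = \sum_n a_n^2 = 1$, de sorte que $A_2 = B_2 = 1$ conviennent.

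Pour la majoration, la clef est une estimation sous-gaussienne. Par indépendance, la fonction génératrice des moments se factorise :
\[
  \EE\bigl[ e^{tS}\bigr] = \prod_{n=1}^N \EE\bigl[ e^{t a_n r_n}\bigr] = \prod_{n=1}^N \cosh(t a_n).
\]
En invoquant l'inégalité élémentaire $\cosh(x)\le e^{x^2/2}$ (comparaison des séries de Taylor, via $(2k)!\ge 2^k k!$), on obtient $\EE[e^{tS}]\le e^{t^2/2}$ pour tout $t\in\R$. Une majoration de Chernoff en résulte : $\PP(|S|>\la)\le 2e^{-\la^2/2}$ pour tout $\la>0$. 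En intégrant l'identité $\EE|S|^p = \int_0^{+\infty} p\la^{p-1}\PP(|S|>\la)\,d\la$ contre cette queue, on majore $\EE|S|^p$ par une expression du type $2^{p/2+1}\Gamma(\tfrac{p}{2}+1)$, dont la racine $p$-ième fournit $B_p$, valable pour tout $0<p<+\infty$.

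Pour la minoration, je distinguerais deux régimes. Si $p\ge 2$, la croissance des normes $\ELL_r(\Om)$ sur l'espace de probabilité $\Om=[0,1]$ donne directement $\norm{S}_p\ge \norm{S}_2 = 1$, et $A_p=1$ convient. Le cas $0<p<2$ est le point délicat, et c'est là que je m'attends à la principale difficulté : la seule orthogonalité ne suffit plus, et il faut recourir à une interpolation. On choisit un exposant $q>2$ et l'on écrit, par log-convexité des normes $\ELL_r(\Om)$ (inégalité de Hölder),
\[
  \norm{S}_2 \le \norm{S}_p^{1-\theta}\,\norm{S}_q^{\theta}, \qquad\text{où}\qquad \tfrac{1}{2}=\tfrac{1-\theta}{p}+\tfrac{\theta}{q},
\]
avec $\theta\in(0,1)$ puisque $p<2<q$. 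Comme $\norm{S}_2=1$ et $\norm{S}_q\le B_q$ par la majoration déjà acquise, il vient $1\le \norm{S}_p^{1-\theta}B_q^{\theta}$, d'où la minoration $\norm{S}_p\ge B_q^{-\theta/(1-\theta)}=:A_p>0$.

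Il ne resterait alors qu'à repasser aux séries infinies. Pour $(a_n)\in\ell_2$, les inégalités déjà établies sur les blocs finis montrent que les sommes partielles de $\sum a_n r_n$ forment une suite de Cauchy dans $\ELL_p(\Om)$ ; on étend les deux inégalités à la limite par continuité de la norme, la majoration pouvant aussi se lire directement via le lemme de Fatou. La difficulté essentielle reste confinée à la minoration pour $p<2$, tout le reste étant une combinaison de l'argument exponentiel et d'estimations standard.
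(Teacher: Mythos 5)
Votre démonstration est correcte, mais il faut d'abord souligner que l'article lui-même ne démontre pas ce théorème : il renvoie explicitement aux Théorèmes 1.10, 1.14 et 11.1 de \cite{DJT}. Votre texte fournit donc une preuve autonome là où l'article se contente d'une citation, et c'est l'une des preuves standard : réduction aux sommes finies normalisées, cas $p=2$ par orthonormalité des $r_n$, majoration par l'estimation sous-gaussienne (factorisation de la fonction génératrice par indépendance, inégalité $\cosh x\le e^{x^2/2}$, borne de Chernoff $\PP(|S|>\lambda)\le 2e^{-\lambda^2/2}$, puis intégration de la queue donnant $\EE|S|^p\le 2^{p/2+1}\Gamma(\tfrac{p}{2}+1)$), et minoration pour $p<2$ par log-convexité des normes $\ELL_r$ entre $p<2<q$, ce qui donne $A_p=B_q^{-\theta/(1-\theta)}$ ; le passage aux séries infinies par le critère de Cauchy dans $\ELL_p(\Omega)$ est également correct, la majoration sur les blocs finis contrôlant les queues. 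Deux précisions méritent d'être ajoutées pour que l'argument couvre bien tout $0<p<+\infty$ : d'une part, pour $0<p<1$, $\norm{\cdot}_p$ n'est qu'une quasi-norme, mais votre interpolation n'utilise Hölder qu'avec les exposants conjugués $r=p/(2(1-\theta))$ et $r'=q/(2\theta)$, qui sont bien $\ge 1$ puisque la définition de $\theta$ entraîne $2(1-\theta)\le p$ et $2\theta\le q$, de sorte que l'argument reste licite, et la convergence dans la métrique de $\ELL_p$ suffit pour le passage à la limite ; d'autre part, pour $p\le 2$ la majoration est immédiate avec $B_p=1$ par monotonie des normes sur un espace de probabilité, l'argument exponentiel n'étant réellement nécessaire que pour $p>2$ (symétriquement, votre minoration n'est délicate que pour $p<2$, comme vous le notez justement).
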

 
\noindent En général, on ne peut pas remplacer la valeur absolue par la norme
d'un espace de Banach. Cependant, on a l'inégalité de Kahane (voir par
exemple \cite[Théorème 11.1]{DJT}) :

\begin{theorem}[Inégalités de Kahane]\label{thm:kahane}
  Soit $0 < p,q < \infty$. Il existe alors une constante $K_{p,q}$
  telle que pour tout espace de Banach $E$ et $x_1, \ldots, x_N \in E$ on ait
\[
   \Bigl( \EE \Bignorm{ \sum_{n=1}^N r_n x_n }^q \Bigr)^{\nicefrac1q} \le K_{p,q} \Bigl( \EE \Bignorm{ \sum_{n=1}^N r_n x_n }^p \Bigr)^{\nicefrac1p}
\]
et 
\[
   \Bigl( \EE \Bignorm{ \sum_{n=1}^N \gamma_n x_n }^q \Bigr)^{\nicefrac1q} \le K_{p,q} \Bigl( \EE \Bignorm{ \sum_{n=1}^N \gamma_n x_n }^p \Bigr)^{\nicefrac1p}.
\]
\end{theorem}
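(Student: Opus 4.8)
Le plan est de traiter séparément les deux sens, en posant $S = \Bignorm{\sum_{n=1}^N r_n x_n}$ (et de même dans le cas gaussien). Lorsque $q \le p$, l'inégalité de Jensen (ou de Hölder sur l'espace de probabilité $\Omega$) donne immédiatement $(\EE S^q)^{1/q} \le (\EE S^p)^{1/p}$, avec constante $1$. Tout le contenu réside donc dans le cas $q > p$, où il faut majorer un moment d'ordre élevé par un moment d'ordre plus petit. Pour cela, je chercherais à établir une inégalité de queue \emph{uniforme} de la forme $\PP\bigl(S > \lambda\,(\EE S^p)^{1/p}\bigr) \le C_p\,e^{-c\lambda}$, la constante ne dépendant ni de $E$, ni de $N$, ni des $x_n$. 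Une telle décroissance, intégrée grâce à $\EE S^q = q\int_0^{+\infty} t^{q-1}\PP(S>t)\,dt$ et au changement de variable $t = \lambda\,(\EE S^p)^{1/p}$, fournit aussitôt la majoration voulue, avec une constante $K_{p,q}$ ne dépendant que de $p$ et $q$.

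Pour le cas gaussien, cette inégalité de queue s'obtient proprement par concentration. La fonction $(\gamma_1,\dots,\gamma_N)\mapsto \Bignorm{\sum_n \gamma_n x_n}$ est $\sigma$-lipschitzienne pour la norme euclidienne, où $\sigma = \sup_{\norm{x^*}\le 1}\bigl(\sum_n \langle x_n,x^*\rangle^2\bigr)^{1/2}$, d'où $\PP(|S - m| > t)\le 2\,e^{-t^2/2\sigma^2}$ pour $m$ une médiane de $S$. Je contrôlerais alors $m$ et $\sigma$ par $(\EE S^p)^{1/p}$: l'inégalité de Markov donne $m \le 2^{1/p}(\EE S^p)^{1/p}$, tandis que pour $x^*$ fixé la variable $\langle \sum_n\gamma_n x_n, x^*\rangle$ est gaussienne centrée d'écart-type $(\sum_n\langle x_n,x^*\rangle^2)^{1/2}$ et de module majoré par $S$, de sorte que $(\sum_n\langle x_n,x^*\rangle^2)^{1/2} = c_p^{-1}\norm{\langle\sum_n\gamma_n x_n,x^*\rangle}_p \le c_p^{-1}(\EE S^p)^{1/p}$; en passant au supremum sur $x^*$ on obtient $\sigma \le c_p^{-1}(\EE S^p)^{1/p}$. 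La concentration donne enfin $(\EE S^q)^{1/q}\le m + C_q\,\sigma$, ce qui conclut.

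Le cas de Rademacher est le véritable obstacle, et ce pour une raison structurelle: on ne dispose pas d'analogue lipschitzien satisfaisant. L'inégalité des différences bornées ferait intervenir la quantité $\sum_n\norm{x_n}^2$, qui n'est pas contrôlée par $(\EE S^p)^{1/p}$ en général --- prendre $x_n = e_n$ dans $\ell_\infty^N$, où $S = 1$ alors que $\sum_n\norm{x_n}^2 = N$ ---, ce qui ruinerait l'universalité de la constante. Je m'appuierais donc plutôt sur l'inégalité de Hoffmann--Jørgensen pour les sommes de vecteurs indépendants symétriques, de la forme
\[
  \PP(S > 2t + s)\ \le\ \PP\bigl(\textstyle\max_n\norm{r_n x_n} > s\bigr)\ +\ 4\,\PP(S>t)^2 .
\]
Le point clef est que le terme $\max_n\norm{x_n}$ est lui-même dominé, en loi, par $S$: de $\norm{x_n}\le 2\max_k\norm{\sum_{j\le k} r_j x_j}$ et de l'inégalité maximale de Lévy on tire $\PP(\max_n\norm{x_n} > 2u)\le 2\,\PP(S>u)$. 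En injectant cette estimation dans l'inégalité de Hoffmann--Jørgensen et en itérant à partir d'une échelle $t_0$ telle que $\PP(S > t_0)$ soit assez petit, le terme quadratique impose une décroissance exponentielle $\PP(S > \lambda t_0)\le C e^{-c\lambda}$, uniforme en $E$, $N$ et $(x_n)$, avec $t_0$ comparable à $(\EE S^p)^{1/p}$ par l'inégalité de Markov.

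Une fois l'inégalité de queue uniforme acquise dans les deux cas, l'intégration décrite au premier paragraphe achève la preuve. Le point délicat est bien l'estimation de déviation dans le cas de Rademacher avec une constante \emph{universelle} (indépendante de la dimension et de l'espace), le reste se ramenant à la combinaison de l'inégalité de Markov, de la concentration et de l'intégration des queues. Il s'agit d'un résultat classique, pour lequel on pourra se reporter à \cite[Théorème 11.1]{DJT}.
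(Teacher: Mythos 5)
Votre preuve est correcte, mais elle suit une route différente de celle du texte, qui en fait ne démontre pas cette inégalité : il renvoie pour la démonstration à \cite[Théorème~11.1]{DJT} et note seulement que la version gaussienne se déduit de la version Rademacher via le théorème central limite. Votre schéma (Jensen pour $q\le p$ ; pour $q>p$, inégalité de queue exponentielle uniforme puis intégration contre $q\,t^{q-1}$) est le schéma classique, et vos deux estimations de déviation sont justes : pour les gaussiennes, la concentration de Borell autour de la médiane, la médiane étant contrôlée par Markov et le paramètre de Lipschitz $\sigma$ par comparaison des projections unidimensionnelles $\langle\sum_n\gamma_n x_n,x^*\rangle$ à $S$ ; pour Rademacher, Hoffmann--J\o{}rgensen joint à l'inégalité maximale de Lévy, laquelle rend le terme $\max_n\norm{x_n}$ (ici déterministe) inférieur à $2t_0$ dès que $\PP(S>t_0)<1/2$, de sorte que l'itération fournit même une décroissance doublement exponentielle le long des échelles dyadiques, donc largement l'exponentielle requise. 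La différence de fond porte sur le cas gaussien : le texte (suivant \cite{DJT}) l'obtient par transfert du cas Rademacher via le théorème central limite, ce qui ne coûte aucune machinerie supplémentaire une fois le cas Rademacher acquis, alors que votre argument par concentration est direct et autonome, avec des constantes explicites, mais repose sur l'inégalité isopérimétrique gaussienne, un outil nettement plus lourd ; votre traitement du cas Rademacher est, lui, essentiellement la preuve classique vers laquelle le texte renvoie. Enfin, votre exemple $x_n=e_n$ dans $\ell_\infty^N$ identifie correctement la raison pour laquelle une approche par différences bornées échouerait dans le cas Rademacher.
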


\noindent La version Gaussienne des inégalités de Kahane se démontre
élégamment à partir de la version Rademacher en utilisant le théorème central  limite.

\begin{theorem}[Inégalité de Grothendieck]\label{thm:grothendieck-ineq}
  Soit $H$ un espace de Hilbert réel de dimension $n$, 
  $(a_{ij})_{i,j\leq 1}$ une matrice $n{\times}n$ et soient $x_1,
  \ldots,x_n, y_1, \ldots ,y_n \in B_H$. On a:
\[
    \Bigl|\sum_{i,j} a_{ij}\bigl\langle x_i,y_j\bigr\rangle\Bigr|
\leq 
    K_G \sup \Bigl\{\; \Bigl|  \sum_{i,j}a_{ij} s_i t_j \Bigr| \SUCHTHAT \; |s_i|\leq 1, |t_j| \leq 1 \Bigr\}
\] 
où $K_G$ est une constante universelle appelée constante de
Grothendieck.
\end{theorem}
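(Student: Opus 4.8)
The plan is to establish the inequality with Krivine's sharp constant $K_G\le\pi/\bigl(2\ln(1+\sqrt2)\bigr)$, via the Gaussian sign trick combined with a tensorisation argument. First I would reduce to unit vectors. Since the right-hand side does not depend on the $x_i,y_j$, it suffices to bound the supremum of the left-hand side over all $x_i,y_j\in B_H$; and because the form $\sum_{i,j}a_{ij}\langle x_i,y_j\rangle$ is linear in each of the vector variables separately, the supremum of its modulus over $B_H^{2n}$ is attained when every $x_i$ and every $y_j$ is a unit vector. So I may assume $\norm{x_i}=\norm{y_j}=1$.

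The idea is then to linearise the inner products probabilistically. Writing $G=\sum_k\gamma_k e_k$ for a standard Gaussian vector of $H$ and $g_x\defequal\langle x,G\rangle$, the variables $g_x,g_y$ are jointly centred Gaussian with $\EE[g_xg_y]=\langle x,y\rangle$ and unit variance. Their signs $\text{sign}(g_x)\in\{-1,1\}$ are exactly the kind of scalars $s_i,t_j$ appearing on the right, and the classical Grothendieck identity reads
\[
   \EE\bigl[\text{sign}(g_x)\,\text{sign}(g_y)\bigr]=\tfrac{2}{\pi}\arcsin\langle x,y\rangle .
\]
The obstruction is that this produces $\arcsin\langle x,y\rangle$ instead of $\langle x,y\rangle$ itself; cancelling the $\arcsin$ is the heart of the matter.

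To invert it I would use Krivine's trick. Starting from $\sin(c\,t)=\sum_{k\ge0}\frac{(-1)^kc^{2k+1}}{(2k+1)!}t^{2k+1}$ and the tensor identity $\langle x,y\rangle^m=\langle x^{\otimes m},y^{\otimes m}\rangle$, I build in $\hat H\defequal\bigoplus_{k\ge0}H^{\otimes(2k+1)}$ the vectors
\[
   u_i=\Bigl(\sqrt{\tfrac{c^{2k+1}}{(2k+1)!}}\;x_i^{\otimes(2k+1)}\Bigr)_{k\ge0},\qquad
   v_j=\Bigl((-1)^k\sqrt{\tfrac{c^{2k+1}}{(2k+1)!}}\;y_j^{\otimes(2k+1)}\Bigr)_{k\ge0},
\]
so that $\langle u_i,v_j\rangle=\sin(c\langle x_i,y_j\rangle)$ and $\norm{u_i}^2=\norm{v_j}^2=\sinh c$. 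Choosing $c=\ln(1+\sqrt2)$, i.e.\ $\sinh c=1$, makes the $u_i,v_j$ unit vectors; moreover $|c\langle x_i,y_j\rangle|\le c<\pi/2$, so $\arcsin\bigl(\sin(c\langle x_i,y_j\rangle)\bigr)=c\langle x_i,y_j\rangle$. Applying the Grothendieck identity to $u_i,v_j$, with the Gaussian now taken in the finite-dimensional span of the $2n$ vectors $u_i,v_j$ (which sidesteps any measure-theoretic issue in $\hat H$), yields $\frac{2c}{\pi}\langle x_i,y_j\rangle=\EE[\text{sign}(g_{u_i})\,\text{sign}(g_{v_j})]$.

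Summing against $a_{ij}$ and setting $s_i=\text{sign}(g_{u_i})$, $t_j=\text{sign}(g_{v_j})$ then gives
\[
   \Bigl|\sum_{i,j}a_{ij}\langle x_i,y_j\rangle\Bigr|
   =\frac{\pi}{2c}\,\Bigl|\EE\sum_{i,j}a_{ij}s_it_j\Bigr|
   \le\frac{\pi}{2c}\sup_{|s_i|,|t_j|\le1}\Bigl|\sum_{i,j}a_{ij}s_it_j\Bigr|,
\]
which is the asserted inequality with $K_G=\pi/\bigl(2\ln(1+\sqrt2)\bigr)$. I expect the main obstacle to be conceptual rather than computational: the Grothendieck identity is a routine two-dimensional Gaussian integral (handled in polar coordinates), whereas the decisive and non-obvious step is Krivine's tensorisation, engineered precisely so that the composition $\arcsin\circ\sin$ collapses to the identity on $[-c,c]$ and returns the original inner products, at the sole cost of the normalisation $\sinh c=1$.
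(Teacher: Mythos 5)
Your proof is correct, but note that the paper itself does not prove this theorem: it is recalled as one of three classical inequalities whose proofs are explicitly delegated to \cite{DJT}, so there is no internal argument to compare yours against. What you give is a complete, self-contained proof by Krivine's method, and every step holds up: the reduction to unit vectors is sound (the form is affine in each vector variable separately, so the supremum of its modulus over the product of balls is attained on the product of spheres); the Grothendieck identity $\EE[\mathrm{sign}(g_u)\,\mathrm{sign}(g_v)]=\tfrac{2}{\pi}\arcsin\langle u,v\rangle$ is correctly stated for unit vectors; and the tensorisation in $\bigoplus_{k\ge 0}H^{\otimes(2k+1)}$ with $\sinh c=1$, i.e.\ $c=\ln(1+\sqrt2)$, does exactly what you claim, namely $\langle u_i,v_j\rangle=\sin\bigl(c\langle x_i,y_j\rangle\bigr)$ with $u_i,v_j$ unit vectors, and since $c<\pi/2$ the composition $\arcsin\circ\sin$ collapses to multiplication by $c$. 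Restricting the Gaussian to the finite-dimensional span of the $2n$ vectors $u_i,v_j$ legitimately sidesteps any measure-theoretic issue in the infinite direct sum. The payoff of your route, beyond self-containedness, is the explicit constant $K_G\le\pi/\bigl(2\ln(1+\sqrt2)\bigr)$, which is sharper than what cruder sign-averaging arguments yield; the paper only needs the existence of some universal constant, which your argument certainly delivers, and all of the paper's subsequent uses of the inequality (the Grothendieck theorem of Section~3 and the factorisation results) go through verbatim with your constant.
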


\noindent Il est important de noter que, $\ell_{\infty}^n$ et
$\ell_1^n$ étant considérés au sens réel, on a
\begin{align*}
\left\|\left(a_{ij}\right)\right\|_{\ell_{\infty}^n\rightarrow\ell_{1}^n}\
=& \; \sup_{\left|y_j\right|\leq 1}\Bigl\|\sum_j a_{ij}y_j\Bigr\|_{\ell_1^n}
= \; \sup_{\left|x_i\right|\leq 1} \sup_{\left|y_j\right|\leq 1}\sum_{i,j}a_{ij}x_i y_j\\
=& \; \sup_{x_i=\pm1}\sup_{y_j=\pm1}\sum_{i,j}a_{ij}x_i y_j.
\end{align*}

\section{Opérateurs de Hilbert-Schmidt et leurs généralisations}

\begin{definition}
  Soit $u \in {\mathcal L}(H_1; H_2)$ un opérateur linéaire. On dit que $u$ est un
  opérateur Hilbert-Schmidt s'il existe une base orthonormale
  $(e_i)_{i\in I}$ de $H_1$ telle que $u e_i \in \ell_2(I; H_2)$.
  L'ensemble des opérateurs de Hilbert-Schmidt est noté
  $S_2(H_1; H_2)$.
\end{definition}

\noindent Il est facile de voir que $S_2(H_1,H_2)$ est un idéal de
$\BOUNDED(H_1; H_2)$.  On peut résumer la théorie classique des
opérateurs de Hilbert-Schmidt, développée par exemple dans \cite{DJT},
par le théorème suivant.

\begin{theorem}\label{thm:HS}
  Soient $H_1$ et $H_2$ deux espaces de Hilbert de dimension infinie,
  et $u \in {\mathcal L}(H_1;H_2)$. Alors les
  conditions suivantes sont équivalentes.
\begin{aufzi}
  \item\label{item:HS1} $u$ est Hilbert-Schmidt.
  \item\label{item:HS2} Pour toute base orthonormale $(e_i)_{i\in I}$
    de $H_1$, on a $u e_i \in \ell_2(I; H_2)$ .
  \item\label{item:HS3} Pour toute famille orthonormale $(e_i)_{i \in
      I}$ d'éléments de $H_1$ et pour toute famille orthonormale
    $(f_j)_{j \in J}$ d'éléments de $H_2$ on a $\bigl( \langle
    u(e_i),f_j \rangle\bigr) \in \ell_2(I{\times}J; \R)$.
  \item\label{item:HS4} Il existe une suite orthonormale $(e_n)_{n\ge
      1}$ d'éléments de $H_1$, une suite orthonormale $(f_n)_{n\ge 1}$
    d'éléments de $H_2$, et une suite $(\tau_n)_{n\ge 1}\in \ell_2$
    telles que l'on ait
    \[ 
        ux=\sum _n\tau_n\langle x,e_n \rangle f_n \ (x \in H_1).
    \] 
  \item\label{item:HS5} Pour toute suite $(x_n)_{n\ge 1} \in
    \ell_2^\FAIBLE(H_1)$, on a $(ux_n)_{n\ge 1}  \in \ell_2(H_2)$.
  \end{aufzi}
  Dans ce cas on a
\[
\sum_{i\in I}\Vert u(e_i)\Vert^2= \sum_{n=1}^{+\infty}a_n^2(u)= \sum
_{n=1}^{+\infty}\vert \tau _n \vert^2 = \inf \Bigl\{ \sum_{n=1}^{+\infty}
\bignorm{ u x_n }^2 \SUCHTHAT \; \norm{ (x_n)_{n\ge 1} }_{\ell_2^\FAIBLE(H_1)}
\le 1 \Bigr\}
\]
où $a_n(u)= \inf \left \{ \Vert u - v\Vert : v \in {\mathcal
    L}(H_1,H_2), rang(v) < n \right \}$ pour $n \ge 1$, et où
$(\tau_n)_{n\ge 1}$ est la suite introduite en \ref{item:HS4}.  
\end{theorem}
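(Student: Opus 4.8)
The plan is to prove the equivalences through a short cycle of implications pivoting on the singular value decomposition, keeping track of the relevant quantities so that the four numerical identities fall out at the end.

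First I would establish the invariance of $\sum_{i}\norm{ue_i}^2$ under the choice of orthonormal basis, which delivers \ref{item:HS1}$\Leftrightarrow$\ref{item:HS2}$\Leftrightarrow$\ref{item:HS3} simultaneously. Fixing orthonormal bases $(e_i)_{i\in I}$ of $H_1$ and $(f_j)_{j\in J}$ of $H_2$, Parseval's identity gives $\norm{ue_i}^2=\sum_{j}|\langle ue_i,f_j\rangle|^2=\sum_j|\langle e_i,u^*f_j\rangle|^2$; since all terms are nonnegative, Tonelli's theorem lets me exchange the summations,
\[
\sum_i\norm{ue_i}^2=\sum_{(i,j)}|\langle ue_i,f_j\rangle|^2=\sum_j\norm{u^*f_j}^2.
\]
The left member does not depend on $(f_j)$ and the right member does not depend on $(e_i)$, so the common value is independent of both bases; this is precisely \ref{item:HS3}, and specialising one family to a basis recovers \ref{item:HS2}. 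For orthonormal families that are not bases I would complete them to bases and use that any subsum of a convergent nonnegative series converges.

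Next I would derive the singular value decomposition, giving \ref{item:HS1}$\Rightarrow$\ref{item:HS4}. Finiteness of $\sum_i\norm{ue_i}^2$ exhibits $u$ as the $\BOUNDED(H_1;H_2)$-limit of its finite truncations (via Cauchy--Schwarz on the tails), hence $u$ is compact and so is the positive self-adjoint operator $u^*u$. The spectral theorem supplies an orthonormal sequence $(e_n)$ of eigenvectors of $u^*u$ with eigenvalues $\tau_n^2>0$, and setting $f_n\defequal ue_n/\tau_n$ produces an orthonormal sequence in $H_2$ with $ux=\sum_n\tau_n\langle x,e_n\rangle f_n$; moreover $\sum_n\tau_n^2=\sum_n\norm{ue_n}^2<+\infty$, so $(\tau_n)\in\ell_2$. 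The converse \ref{item:HS4}$\Rightarrow$\ref{item:HS1} is immediate: completing $(e_n)$ to a basis of $H_1$ yields $\norm{ue_n}=|\tau_n|$ on the $e_n$ and $0$ elsewhere, whence $\sum_i\norm{ue_i}^2=\sum_n\tau_n^2$.

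For the last equivalence I would treat \ref{item:HS1}$\Rightarrow$\ref{item:HS5} and \ref{item:HS5}$\Rightarrow$\ref{item:HS1} separately. Writing $\norm{ux_k}^2=\sum_n\tau_n^2|\langle x_k,e_n\rangle|^2$ and summing over $k$ with Tonelli gives $\sum_k\norm{ux_k}^2=\sum_n\tau_n^2\sum_k|\langle x_k,e_n\rangle|^2\le\norm{(x_k)}_{\ell_2^\FAIBLE(H_1)}^2\sum_n\tau_n^2$, since each $e_n$ is a unit vector (identifying $H_1^*$ with $H_1$); conversely, feeding \ref{item:HS5} the members $(e_n)$ of an orthonormal basis, which satisfy $\norm{(e_n)}_{\ell_2^\FAIBLE(H_1)}=1$ by Parseval, forces $(ue_n)\in\ell_2(H_2)$, i.e.\ \ref{item:HS2}. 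The same inequality, together with equality attained at an orthonormal basis, also identifies the extremal quantity on the right of the final display as $\sum_n\tau_n^2$ (it is the square of the $2$-summing norm of $u$).

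It remains to match the approximation numbers, the one genuinely delicate point, which I expect to be the main obstacle. Ordering $\tau_1\ge\tau_2\ge\cdots$, the truncation of \ref{item:HS4} to rank $n-1$ gives $a_n(u)\le\tau_n$; for the reverse, any $v$ of rank $<n$ admits, by a dimension count, a unit vector $x\in\mathrm{span}(e_1,\dots,e_n)\cap\ker v$, for which $\norm{(u-v)x}^2=\norm{ux}^2=\sum_{k\le n}\tau_k^2|\langle x,e_k\rangle|^2\ge\tau_n^2$, so that $a_n(u)\ge\tau_n$. Hence $a_n(u)=\tau_n$, and summing the squares closes the chain $\sum_i\norm{ue_i}^2=\sum_n a_n^2(u)=\sum_n\tau_n^2$, completing the numerical identities.
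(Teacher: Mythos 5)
Your proof is correct, but there is nothing in the paper to compare it against: the authors state Théorème~\ref{thm:HS} explicitly as a summary of the classical theory, refer to \cite{DJT} for its development, and pass immediately to $p$-summing operators without any proof. Your argument is the standard self-contained one, and all of its steps are sound: the Tonelli/double-sum identity $\sum_i\norm{ue_i}^2=\sum_{i,j}|\langle ue_i,f_j\rangle|^2=\sum_j\norm{u^*f_j}^2$ gives basis-independence and hence \ref{item:HS1}$\Leftrightarrow$\ref{item:HS2}$\Leftrightarrow$\ref{item:HS3} at once; norm-convergence of the finite truncations gives compactness, so the spectral theorem applied to $u^*u$ yields the Schmidt decomposition \ref{item:HS4} (with $f_n=ue_n/\tau_n$ orthonormal because the $e_n$ are eigenvectors); the same Tonelli computation gives \ref{item:HS4}$\Rightarrow$\ref{item:HS5}; and the rank/dimension-count argument identifying $a_n(u)$ with the decreasingly ordered $\tau_n$ is exactly the classical proof that approximation numbers equal singular values. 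What your route buys is a complete proof where the paper offers only a citation.

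Two points deserve explicit attention rather than silence. First, the last display of the theorem as printed says $\inf$, but that quantity is trivially zero (the zero sequence is admissible), so the identity as literally stated is false; you computed the supremum, i.e.\ $\pi_2(u)^2$, which is clearly what is intended, but since you are proving a different (corrected) statement you should say so. Second, the theorem assumes only that $H_1$, $H_2$ are infinite-dimensional, not separable, whereas your step \ref{item:HS5}$\Rightarrow$\ref{item:HS2} feeds an orthonormal \emph{basis} into \ref{item:HS5}, and $\ell_2^\FAIBLE(H_1)$ contains only sequences; likewise your completions of orthonormal families to bases may produce uncountable index sets. The fix is one line: if $\sum_{i\in I}\norm{ue_i}^2=+\infty$, then already some countable subfamily has infinite sum, and that orthonormal sequence lies in the unit ball of $\ell_2^\FAIBLE(H_1)$ by Bessel's inequality, contradicting \ref{item:HS5}. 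With these two remarks added, the proof is complete.
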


\subsection{Opérateurs $p$-sommants}

\begin{definition}\label{def:p-summing}
  Soient $E$ et $F$ deux espaces de Banach, et soit $p \in
  [1,+\infty[$. On dit qu'un opérateur $u \in \LIN(E; F)$ est
  $p$-sommant si $u(x_n) \in \ell_p(F)$ pour toute suite $(x_n) \in
  \ell_p^\FAIBLE(E)$.
\end{definition}

\noindent En utilisant le théorème du graphe fermé, on vérifie qu'un opérateur
$u:E\to F$ est $p$-sommant si et seulement si il existe une constante
$c\geq 0$ telle que pour $m\in\mathbb{N}$, $x_1,\dots,x_N \in E$, on
ait:
\begin{equation}\label{eq:p-summing}
  \sum_{n=1}^{N}\left\|ux_n\right\|^{p}\leq c^{p}\sup_{\left\|x^*\right\|\leq1}\sum_{n=1}^{N}\left|\left\langle
      x_n,x^*\right\rangle\right|^{p}.
\end{equation} 
et dans ce cas on note $\pi_p(u)$ le infimum des constantes $c>0$
vérifiant (\ref{eq:p-summing}). Autrement dit, l'application $\tilde
u: (x_n)_{n\ge 1} \to (ux_n)_{n\ge 1}$ est une application continue de
$\ell_p^\FAIBLE(E)$ dans $\ell_p(E)$, et $\pi_p(u)= \Vert \tilde u
\Vert$, voir \cite[Proposition 2.1.]{DJT}.  On note
$\Pi_p\left(E;F\right)$ l'espace des opérateurs $p$-sommants $u:E \to
F$. Muni de la norme $\pi_p$, $\Pi_p\left(E;F\right)$ est un espace de
Banach \cite[Proposition 2.6, p. 38]{DJT}.  

On déduit de l'inégalité
de Hölder que pour $1 \le p < q < +\infty$, on a  $\Pi_p(E; F)\subset
\Pi_q(E; F)$, et $\pi_q(u)\le \pi_p(u)$ pour $u \in \Pi_p(E; F)$, voir \cite[Théorème
2.8]{DJT}.  De
plus il est immédiat que $\Pi_p(E; F) \subseteq \BOUNDED(E; F)$ avec
$\norm{u}_{\BOUNDED(E; F)} \le \pi_p(u)$.  On a alors la propriété
d'idéal \cite[p. 37]{DJT}, qui résulte immédiatement de la définition
ci-dessus.

\begin{proposition}[propriété d'idéal]
  Soient $E, F, Z, W$ des espaces de Banach, soit $u \in \BOUNDED(E; F)$
  un opérateur $p$-sommant, et soient $v \in \BOUNDED(Z; E)$ et $w \in
  \BOUNDED(F ; W)$ deux opérateurs linéaires continus. Alors
  $w{\circ}u{\circ}v$ est $p$-sommant et \ $\pi_p\left(w{\circ}
    u{\circ}v\right) \leq
  \left\|w\right\|\pi_p\left(u\right)\left\|v\right\|$.
\end{proposition}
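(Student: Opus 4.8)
La stratégie serait d'appliquer directement la caractérisation (\ref{eq:p-summing}) des opérateurs $p$-sommants, en enchaînant trois majorations élémentaires correspondant respectivement aux trois opérateurs $w$, $u$ et $v$. Fixons $N \ge 1$ et $z_1, \dots, z_N \in Z$; il s'agit de majorer $\sum_{n=1}^N \norm{wuvz_n}^p$.

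Je commencerais par faire sortir $\norm{w}$: comme $w$ est borné, on a $\norm{w(uvz_n)} \le \norm{w}\,\norm{uvz_n}$, d'où $\sum_n \norm{wuvz_n}^p \le \norm{w}^p \sum_n \norm{u(vz_n)}^p$. On applique ensuite l'inégalité (\ref{eq:p-summing}) à l'opérateur $p$-sommant $u$ et aux vecteurs $vz_1, \dots, vz_N \in E$, ce qui donne
\[
  \sum_{n=1}^N \norm{u(vz_n)}^p \le \pi_p(u)^p \sup_{\norm{x^*}\le 1} \sum_{n=1}^N \bigl| \langle vz_n, x^* \rangle \bigr|^p.
\]

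L'étape-clé, et la seule qui demande un léger argument, est le traitement de $v$, qui se fait par passage à l'adjoint. Pour $x^* \in E^*$ on a $\langle vz_n, x^* \rangle = \langle z_n, v^* x^* \rangle$; lorsque $x^*$ parcourt $B_{E^*}$, l'image $v^* x^*$ reste dans $\norm{v^*}\,B_{Z^*} = \norm{v}\,B_{Z^*}$, puisque $\norm{v^*}=\norm{v}$. Par homogénéité en $y^*$, on en déduit
\[
  \sup_{\norm{x^*}\le 1} \sum_{n=1}^N \bigl| \langle z_n, v^* x^* \rangle \bigr|^p \le \norm{v}^p \sup_{\norm{y^*}\le 1} \sum_{n=1}^N \bigl| \langle z_n, y^* \rangle \bigr|^p.
\]

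En combinant ces trois majorations on obtient
\[
  \sum_{n=1}^N \norm{wuvz_n}^p \le \bigl( \norm{w}\, \pi_p(u)\, \norm{v} \bigr)^p \sup_{\norm{y^*}\le 1} \sum_{n=1}^N \bigl| \langle z_n, y^* \rangle \bigr|^p,
\]
ce qui établit à la fois que $wuv$ est $p$-sommant, via la caractérisation (\ref{eq:p-summing}), et que $\pi_p(wuv) \le \norm{w}\,\pi_p(u)\,\norm{v}$, la quantité $\pi_p(wuv)$ étant l'infimum des constantes admissibles. La preuve ne présente pas de véritable difficulté: le seul point à ne pas négliger est l'identité $\norm{v^*}=\norm{v}$ rappelée au début de la section~2, qui permet de contrôler $v$ sans perte sur la constante.
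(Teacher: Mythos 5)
Votre démonstration est correcte et correspond exactement à l'argument que l'article sous-entend : le texte ne donne aucune preuve détaillée et se contente d'affirmer que la propriété d'idéal \og résulte immédiatement de la définition ci-dessus \fg{} (avec renvoi à \cite[p.~37]{DJT}), et votre vérification directe via la caractérisation (\ref{eq:p-summing}) --- majoration par $\|w\|$, application de l'inégalité $p$-sommante à $u$, puis passage à l'adjoint $v^*$ avec $\|v^*\|=\|v\|$ --- est précisément cette vérification immédiate.
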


Si $K$ est un sous-ensemble faible$*$-fermé normant de $B_{E^*}$,
alors $K$ est faible$*$-compact et l'application $\iota_E: E \to {\mathcal
  C}(K)$ est une isométrie, où ${\mathcal C}(K)$ désigne
l'algèbre de Banach des fonctions continues sur $K$ et où
$\iota_E(x)(x^*)=\langle x,x^* \rangle $ pour $x \in E$, $x^*\in K$.
Un exemple d'opérateur $p$-sommant est l'injection $j_p:
{\mathcal{C}}(K)\to {\mathcal L}^p(\mu):={\mathcal L}^p(K,\mu)$ où ${\mathcal C}(K)$ désigne
l'espace des fonctions continues sur un compact $K$ et $\mu$ une
mesure de probabilité sur $K$, c'est à dire une mesure de Radon
positive sur $K$ telle que $\mu(K)=1$. Le caractère fondamental de cet
exemple est montré par le théorème suivant, voir
\cite[Théorème~2.13]{DJT}, qui joue un rôle essentiel dans la théorie
des opérateurs $p$-sommants.

 \begin{theorem}[Théorème de factorisation de Pietsch]\label{thm:pietsch}
   On suppose que $1\leq p <+\infty$. Soit $u \in \LIN(E ; F)$ un
   opérateur linéaire et $K$ un sous-ensemble normant faible
   $\ast$-fermé de $B_{E^*}$. Les assertions suivantes sont
   équivalentes.
\begin{aufzi}
\item $u$ est $p$-sommant.
\item  Il existe une mesure de probabilité $\mu$ sur K, un sous-espace
  vectoriel fermé $E_p \ de \ \ELL_p\left(\mu\right)$ et un opérateur
  $\hat{u} \in \BOUNDED(E_{p}; F)$ tels que:
 \begin{aufzii}
  \item $ j_{p}\circ \iota_{E}\left(E\right)\subset E_p$ et
  \item $\hat{u}\circ j_{p}\circ \iota_{E}\left(x\right)=ux$
  $\forall$ $x\in E$.  Autrement dit le diagramme suivant commute.
 \[
 \shorthandoff{;:!?}
\xymatrix{ 
    E   \ar[r]^{\iota_E} \ar[d]_{u}   &   \; \iota_E(E) \; \ar[d]_{j_p^E}  \ar@{^{(}->}[r]    & C(K) \ar[d]^{j_p}\\
    F                           &   \; E_p \; \ar[l]^{\widetilde u} \ar@{^{(}->}[r]   & \ELL_p(\mu) \\
}
 \]
\end{aufzii}
\end{aufzi}
\end{theorem}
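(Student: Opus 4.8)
Le plan est de traiter séparément les deux implications. L'implication $(b)\Rightarrow(a)$ sera la partie facile: si $u=\hat u\circ j_p\circ \iota_E$, alors $u$ s'écrit comme composée de l'isométrie $\iota_E$, de l'injection $j_p:C(K)\to\ELL_p(\mu)$ qui est $p$-sommante (c'est l'exemple fondamental rappelé avant l'énoncé), et de l'opérateur borné $\hat u$. La propriété d'idéal des opérateurs $p$-sommants donnera immédiatement que $u$ est $p$-sommant, avec $\pi_p(u)\le \norm{\hat u}$. C'est l'implication $(a)\Rightarrow(b)$ qui demandera du travail. Je poserai $c=\pi_p(u)$; comme $K$ est normant, la caractérisation~(\ref{eq:p-summing}) et la formule~(\ref{lp-faible-via-normant}) permettront de récrire la $p$-sommabilité sous la forme
\[
   \sum_{n=1}^N \bignorm{u x_n}^p \le c^p \sup_{x^* \in K} \sum_{n=1}^N \bigl| \langle x_n, x^* \rangle \bigr|^p
   \qquad (x_1,\dots,x_N\in E).
\]

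L'idée centrale sera de fabriquer une \emph{mesure de Pietsch} $\mu$ par séparation de Hahn-Banach. Je considèrerai, dans l'espace $C(K)$ des fonctions continues réelles sur le compact $K$, l'ensemble $\mathcal{C}$ des fonctions de la forme
\[
   g(x^*) = \sum_{n=1}^N \Bigl( \bignorm{u x_n}^p - c^p \bigl| \langle x_n, x^* \rangle \bigr|^p \Bigr),
   \qquad N\ge 1,\ x_1,\dots,x_N\in E.
\]
La première étape consistera à vérifier que $\mathcal{C}$ est un cône convexe: l'homogénéité viendra du remplacement de $x_n$ par $\lambda^{1/p}x_n$, et la stabilité par addition de la concaténation des familles. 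L'inégalité ci-dessus se relira alors en disant que $\inf_{x^*\in K}g(x^*)\le 0$ pour tout $g\in\mathcal{C}$, de sorte que $\mathcal{C}$ sera disjoint du cône ouvert convexe $\mathcal{P}=\{f\in C(K)\SUCHTHAT f>0 \text{ sur } K\}$.

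La séparation de Hahn-Banach entre le convexe $\mathcal{C}$ et l'ouvert convexe $\mathcal{P}$ fournira une forme linéaire continue non nulle $\varphi\in C(K)^*$ et une constante de séparation que le caractère conique de $\mathcal{C}$ (et l'approche de $0$ par des fonctions de $\mathcal{P}$) forcera à être nulle; on aura donc $\varphi\le 0$ sur $\mathcal{C}$ et $\varphi\ge 0$ sur $\mathcal{P}$. Par continuité, $\varphi$ sera positive sur les fonctions positives, donc, via le théorème de représentation de Riesz, représentée par une mesure de Radon positive non nulle, que je normaliserai par sa masse totale $\varphi(\mathbf 1)>0$ en une mesure de probabilité $\mu$. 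En appliquant $\varphi\le 0$ à la fonction $g$ associée à la famille réduite au seul vecteur $x$ et en utilisant $\mu(K)=1$, j'obtiendrai la domination
\[
   \bignorm{u x}^p \le c^p \int_K \bigl| \langle x, x^* \rangle \bigr|^p \, d\mu(x^*) = c^p \, \bignorm{ j_p \circ \iota_E(x) }_{\ELL_p(\mu)}^p
   \qquad (x \in E).
\]

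Il restera enfin à en déduire la factorisation. Je poserai $E_p$ égal à l'adhérence de $j_p\circ\iota_E(E)$ dans $\ELL_p(\mu)$, ce qui assurera~(i); la domination précédente montrera que l'application $j_p\circ\iota_E(x)\mapsto ux$ est bien définie et $c$-lipschitzienne sur le sous-espace dense $j_p\circ\iota_E(E)$ de $E_p$, donc se prolongera en un opérateur borné $\hat u\in\BOUNDED(E_p;F)$ vérifiant~(ii) avec $\norm{\hat u}\le c$. La principale difficulté sera l'argument de séparation et le contrôle de la forme $\varphi$ ainsi obtenue: il faudra s'assurer qu'elle est positive, non nulle, et de masse totale strictement positive afin de pouvoir la normaliser en une mesure de probabilité.
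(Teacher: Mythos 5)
Votre proposition est correcte, mais il faut préciser d'emblée que l'article lui-même ne démontre pas ce théorème : il l'énonce en renvoyant à \cite[Théorème~2.13]{DJT}, de sorte qu'il n'y a pas de preuve interne à laquelle comparer la vôtre ; la comparaison pertinente est avec la référence citée. Votre schéma est l'argument classique de Pietsch et tous les points délicats y sont identifiés et traités correctement : pour l'implication facile, la propriété d'idéal (la corestriction de $j_p\circ\iota_E$ à valeurs dans $E_p$ reste $p$-sommante, donc $u=\hat u\circ(j_p\circ\iota_E)$ est $p$-sommant avec $\pi_p(u)\le\Vert\hat u\Vert\,\pi_p(j_p)\le\Vert\hat u\Vert$, la mesure $\mu$ étant de probabilité) ; pour l'implication difficile, la structure de cône convexe de $\mathcal{C}$ (homogénéité par $x_n\mapsto\lambda^{\nicefrac{1}{p}}x_n$, additivité par concaténation des familles), la disjonction avec le cône ouvert des fonctions strictement positives grâce à la reformulation de la $p$-sommabilité sur l'ensemble normant $K$ via (\ref{lp-faible-via-normant}), la nullité de la constante de séparation par conicité, la positivité de la forme séparante et le fait que $\varphi(\mathbf{1})>0$ (sans quoi $\varphi$ serait nulle), puis la domination $\Vert ux\Vert^p\le c^p\int_K\vert\langle x,x^*\rangle\vert^p\,d\mu$ qui assure à la fois la bonne définition et le caractère $c$-lipschitzien de $j_p\iota_E(x)\mapsto ux$ sur le sous-espace dense de $E_p=\overline{j_p\iota_E(E)}$. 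La seule différence notable avec \cite{DJT} est technique : la référence déduit le théorème de domination du lemme de Ky Fan (un lemme de type minimax sur les convexes compacts, Lemme 2.11 de \cite{DJT}), tandis que vous utilisez directement une séparation de Hahn-Banach entre deux cônes de ${\mathcal C}(K)$ ; les deux voies sont équivalentes et standard, la vôtre ayant l'avantage de ne reposer que sur Hahn-Banach et le théorème de représentation de Riesz, au prix de la vérification explicite des propriétés de cône et de la normalisation de la forme séparante que vous signalez vous-même comme l'étape sensible.
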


\begin{corollary} \label{cor:pietsch}
  Soient $E$ et $F$ deux espaces de Banach, et soit $K$ un
  sous-ensemble faible$ *$ fermé normant de $B_{E^*}$. Un
  opérateur linéaire $u \in \BOUNDED(E; F)$ est $2$-sommant si et
  seulement si il existe une mesure de probabilité $\mu$ sur $K$ et
  $\tilde{u}\in \mathcal{L}\left(\ELL_p\left(\mu\right),F\right)$ tel que
  le diagramme suivant commute
 \[
 \shorthandoff{;:!?}\xymatrix{
 E \ar[r]^u \ar[d]_{\iota_E} \ar[r] &F\\
 C\left(K\right) \ar[r]_{j_2} &\ELL_2\left(\mu\right) \ar[u]_{\tilde{u}}
 }
 \]
 et on a dans ce cas $\left\|\tilde{u}\right\|=\pi_2\left(u\right)$.
\end{corollary}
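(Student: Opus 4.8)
Le plan est de déduire ce corollaire du théorème de factorisation de Pietsch (Théorème~\ref{thm:pietsch}) appliqué avec $p=2$, la seule nouveauté par rapport à cet énoncé général étant que l'opérateur $\tilde u$ sera défini sur $\ELL_2(\mu)$ tout entier et non sur un simple sous-espace fermé. C'est ici qu'interviendra de façon essentielle le fait que $\ELL_2(\mu)$ est un espace de Hilbert. Supposons donc $u$ $2$-sommant. Le Théorème~\ref{thm:pietsch} fournit une mesure de probabilité $\mu$ sur $K$, un sous-espace fermé $E_2 \subset \ELL_2(\mu)$ contenant $j_2{\circ}\iota_E(E)$, et un opérateur $\hat u \in \BOUNDED(E_2; F)$ tel que $\hat u{\circ}j_2{\circ}\iota_E = u$. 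On noterait alors $P$ la projection orthogonale de $\ELL_2(\mu)$ sur $E_2$ et on poserait $\tilde u \defequal \hat u{\circ}P$. Comme $P$ agit comme l'identité sur $E_2$ et que $j_2{\circ}\iota_E(E) \subset E_2$, on obtient $\tilde u{\circ}j_2{\circ}\iota_E = \hat u{\circ}j_2{\circ}\iota_E = u$, c'est-à-dire la factorisation annoncée sur $\ELL_2(\mu)$ entier.

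Pour la réciproque, si $u = \tilde u{\circ}j_2{\circ}\iota_E$ avec $\tilde u$ borné, alors $u$ est $2$-sommant: $\iota_E$ est une isométrie, l'injection $j_2$ est $2$-sommante, et la propriété d'idéal des opérateurs $2$-sommants permet de conclure. On peut aussi l'obtenir directement par le calcul qui sert à établir l'inégalité sur les normes ci-dessous.

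Il reste à établir l'égalité $\|\tilde u\| = \pi_2(u)$. Pour l'inégalité $\pi_2(u) \le \|\tilde u\|$, on partirait de $x_1, \ldots, x_N \in E$ et on écrirait
\[
\sum_{n=1}^N \|u x_n\|^2 = \sum_{n=1}^N \|\tilde u(j_2\iota_E x_n)\|^2 \le \|\tilde u\|^2 \int_K \sum_{n=1}^N |\langle x_n, x^*\rangle|^2 \, d\mu(x^*) \le \|\tilde u\|^2 \sup_{x^* \in K} \sum_{n=1}^N |\langle x_n, x^*\rangle|^2,
\]
la dernière majoration venant de ce que $\mu$ est une probabilité; comme $K$ est normant, le supremum vaut $\|(x_n)\|_{\ell_2^\FAIBLE(E)}^2$ d'après~(\ref{lp-faible-via-normant}), ce qui donne $\pi_2(u) \le \|\tilde u\|$ via la caractérisation~(\ref{eq:p-summing}). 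Pour l'inégalité inverse, on reviendrait à la preuve du Théorème~\ref{thm:pietsch}: la mesure $\mu$ y est construite de sorte que l'inégalité de domination $\|u x\| \le \pi_2(u)\,\|j_2\iota_E x\|_{\ELL_2(\mu)}$ soit satisfaite pour tout $x \in E$, ce qui signifie exactement $\|\hat u\| \le \pi_2(u)$ sur le sous-espace dense $j_2\iota_E(E)$, puis sur $E_2$ par densité. Enfin $\tilde u$ coïncide avec $\hat u$ sur $E_2$ et $P$ est de norme au plus $1$, d'où $\|\tilde u\| = \|\hat u\|$; en combinant les trois inégalités on obtient $\pi_2(u) \le \|\tilde u\| = \|\hat u\| \le \pi_2(u)$, donc l'égalité voulue.

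Le point véritablement spécifique à ce corollaire --- et la seule vraie difficulté --- est le passage de $E_2$ à $\ELL_2(\mu)$ tout entier: c'est la projection orthogonale, de norme $1$ et laissant $E_2$ invariant, qui assure simultanément la préservation de la factorisation et de la norme, propriété qui ferait défaut dans un espace de Banach non hilbertien. L'obtention de l'égalité exacte $\|\tilde u\| = \pi_2(u)$ (et non d'une simple inégalité) exige par ailleurs de suivre la constante optimale dans la preuve du Théorème~\ref{thm:pietsch}, et non seulement l'existence qualitative de la factorisation.
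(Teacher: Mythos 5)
Your proof is correct and takes essentially the same route as the paper: apply Theorem~\ref{thm:pietsch} with $p=2$ and extend $\hat u$ to all of $\ELL_2(\mu)$ by setting $\tilde u = \hat u \circ P$, where $P$ is the orthogonal projection onto $E_2$. You merely spell out what the paper's one-sentence proof leaves implicit (the converse via the ideal property, and the norm equality via the Pietsch domination inequality together with $\|P\|\le 1$), which is entirely consistent with its argument.
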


\begin{proof} Ceci se déduit du théorème de factorisation de
  Pietsch dans le cas $p=2$, en posant $\tilde u = \hat u \circ P$,
  où $P$ désigne la projection orthogonale de $L^2(\mu)$ sur
  $E_2$.
\end{proof}

\subsection{Opérateurs absolument sommants}

\begin{definition}\label{def:abs-summing}
  Un opérateur $u:E\to F$ est appelé {\it absolument sommant} si la
  série $\sum_n \norm{ u x_n }$ converge pour toute série
  commutativement convergente $\sum x_n$  d'éléments de $E$.
\end{definition}

\noindent L'ensemble des opérateurs absolument sommants est noté
$\Pi_{abs}(E; F)$, et les éléments de $\Pi_{abs}(E; F)$ sont
caractérisés par la condition
\[
    \pi_{abs}(u) \defequal \sup \; \sum\limits_{n=1}^N  \left \Vert ux_n \right \Vert <+\infty,
\]
le supremum étant calculé sur toutes les familles finies
$(x_1,\dots,x_N)$ d'éléments de $E$ telles que
\[
\sup_{\epsilon_1=\pm 1, \dots, \epsilon_N=\pm 1}\Bignorm{ \sum  \limits _{n=1}^N \epsilon_nux_n } \le 1.
\]
Comme l'ensemble $\{-1,1\}^{\N}$ est un sous-ensemble normant de la
boule unité de $\ell_{\infty}=\ell_1^*$, on obtient facilement le
résultat suivant.

\begin{proposition} \label{pro:abs-sommant-egal-1-sommant}
  Soit $E, F$ deux espaces de Banach, et soit $u \in {\mathcal
    L}(E; F)$. Alors $u$ est absolument sommant si et seulement si $u$
  est $1$-sommant, et dans ce cas $\pi_1(u)=\pi_{abs}(u)$.
\end{proposition}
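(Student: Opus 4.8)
The plan is to prove directly that $\pi_{abs}(u) = \pi_1(u)$ as elements of $[0,+\infty]$ for every $u \in \BOUNDED(E;F)$; the equivalence of the two classes and the equality of norms then follow at once, since $u$ is absolutely summing exactly when $\pi_{abs}(u) < +\infty$ (the characterization recalled just before the statement) and $1$-summing exactly when $\pi_1(u) < +\infty$. The whole idea is that both quantities are suprema of $\sum_{n=1}^N \norm{u x_n}$ over the \emph{same} set of normalized finite families, once their constraints are written in a common language.

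The bridge is the identity already recorded in Section~2: for any finite family $x_1, \dots, x_N \in E$,
\[
   \sup_{\eps_n = \pm 1} \Bignorm{ \sum_{n=1}^N \eps_n x_n } = \sup_{\norm{x^*}\le 1} \sum_{n=1}^N \bigl| \langle x_n, x^* \rangle \bigr| = \norm{ (x_n)_{n \le N} }_{\ell_1^\FAIBLE(E)},
\]
the first equality coming precisely from the fact that $\{-1,1\}^N$ is a norming subset of the unit ball of $\ell_\infty^N = (\ell_1^N)^*$. I would use this to rewrite the constraint $\sup_{\eps}\bignorm{\sum \eps_n x_n}\le 1$ appearing in the definition of $\pi_{abs}(u)$ as $\norm{(x_n)_{n\le N}}_{\ell_1^\FAIBLE(E)}\le 1$, so that
\[
   \pi_{abs}(u) = \sup\Bigl\{ \sum_{n=1}^N \norm{u x_n} \SUCHTHAT \sup_{\norm{x^*}\le 1}\sum_{n=1}^N \bigl|\langle x_n, x^* \rangle\bigr| \le 1 \Bigr\}.
\]

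On the other side, the characterization~(\ref{eq:p-summing}) of $p$-summing operators with $p=1$ says that $\pi_1(u)$ is the least constant $c$ with $\sum_{n=1}^N \norm{u x_n} \le c \sup_{\norm{x^*}\le1}\sum_{n=1}^N |\langle x_n, x^*\rangle|$ for all finite families. By homogeneity (replacing $x_n$ by $\la x_n$) this least constant equals the supremum of $\sum_{n=1}^N \norm{u x_n}$ over the families satisfying $\sup_{\norm{x^*}\le1}\sum_{n=1}^N|\langle x_n, x^*\rangle|\le 1$, that is, exactly the expression just obtained for $\pi_{abs}(u)$. Hence $\pi_1(u)=\pi_{abs}(u)$, which finishes the argument. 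The only step carrying any real content is the Section~2 identity turning the Rademacher-type supremum into the weak $\ell_1$ norm; everything after it is homogeneity bookkeeping, so I do not expect a genuine obstacle here.
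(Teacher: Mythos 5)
Your argument is correct and takes essentially the same route as the paper: the paper deduces the proposition from exactly the Section~2 identity you invoke (the supremum over signs equals the weak $\ell_1$ norm because $\{-1,1\}^N$ is norming for the unit ball of $\ell_\infty=\ell_1^*$), leaving the homogeneity bookkeeping implicit where you spell it out. There is no gap; your write-up simply makes the paper's one-line justification explicit.
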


\noindent En particulier, $\Pi_{abs}(E; F)$ est un idéal d'opérateurs.
On a vu dans l'équivalence \ref{item:HS5} du Théorème~\ref{thm:HS} que
des opérateurs Hilbert-Schmidt coïncident avec les opérateurs
$2$-sommants.  En utilisant le Corollaire~\ref{cor:pietsch} et
l'inégalité de Khintchine on obtient le résultat suivant, voir
\cite[Théorème~2.21.]{DJT}.

\begin{theorem} Soit $E$ un espace de Banach, soit $H$ un espace de
  Hilbert, et soit $u \in \BOUNDED(E;H)$. S'il existe $p \ge 1$
  tel que $u^*$ soit $p$-sommant, alors $u$ est absolument sommant et
  on a
\[
    \pi_1(u)\le A_1^{-1}\, B_p\,  \pi_p(u^*),
\]
où $A_1$ et $B_q$ sont les constantes intervenant dans les
inégalités de Khintchine.
\end{theorem}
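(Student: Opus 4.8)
The plan is to establish the equivalent finite inequality. By Proposition~\ref{pro:abs-sommant-egal-1-sommant} it suffices to show that $u$ is $1$-summing with the stated bound, and by the characterization of $\pi_{abs}$ recalled above, together with the identity $\norm{(x_n)}_{\ell_1^\FAIBLE(E)}=\sup_{\eps_n=\pm1}\bignorm{\sum_n\eps_n x_n}$ established in Section~2, this amounts to proving $\sum_{n=1}^N\norm{ux_n}\le A_1^{-1}B_p\,\pi_p(u^*)$ whenever $x_1,\dots,x_N\in E$ satisfy $W\defequal\norm{(x_n)}_{\ell_1^\FAIBLE(E)}\le1$. The whole point is to produce the exponent $1$ on the left out of the Hilbertian structure of the target.

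The first, and decisive, step is to \emph{linearize} each norm $\norm{ux_n}$ by means of the \emph{lower} Khintchine inequality. Fixing an orthonormal basis $(e_k)$ of $H$ and writing $c_{n,k}=\langle ux_n,e_k\rangle$, one has $\norm{ux_n}=(\sum_k c_{n,k}^2)^{\demi}$, so Theorem~\ref{thm:hinchin} (with $p=1$) gives $A_1\norm{ux_n}\le\EE\bigl|\sum_k r_k c_{n,k}\bigr|$. Introducing the random-sign vector $w=\sum_k r_ke_k$ (which I would first truncate to $\sum_{k\le K}r_ke_k\in H$, the sequence $(c_{n,k})_k$ being square-summable, and then let $K\to\infty$), this reads $A_1\norm{ux_n}\le\EE|\langle ux_n,w\rangle|=\EE|\langle x_n,u^*w\rangle|$. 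Summing over $n$ and applying Fubini, I obtain $A_1\sum_n\norm{ux_n}\le\EE\,\sum_n|\langle x_n,u^*w\rangle|$.

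Next I would observe that, for any fixed $y^*\in E^*$, the normalization of $W$ yields $\sum_n|\langle x_n,y^*\rangle|=\norm{y^*}\sum_n\bigl|\langle x_n,y^*/\norm{y^*}\rangle\bigr|\le W\,\norm{y^*}\le\norm{y^*}$; applied with $y^*=u^*w$ this bounds the inner sum and leaves $A_1\sum_n\norm{ux_n}\le\EE\norm{u^*w}_{E^*}$. It remains to prove $\EE\norm{u^*w}_{E^*}\le B_p\,\pi_p(u^*)$, and this is exactly where the $p$-summing hypothesis enters. By Pietsch's factorization theorem~\ref{thm:pietsch} applied to $u^*$ (and the norm estimate on the factor, as in Corollary~\ref{cor:pietsch} for $p=2$), there is a probability measure $\mu$ on a weak${}^*$-closed normant set $K\subset B_H$ with $\norm{u^*g}_{E^*}\le\pi_p(u^*)\bigl(\int_K|\langle g,h'\rangle|^p\,d\mu(h')\bigr)^{\nicefrac1p}$ for every $g\in H$. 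Taking $g=w$, using Jensen to pull the power $\nicefrac1p$ out of $\EE$, Fubini, and then the \emph{upper} Khintchine inequality on the scalar sums $\langle w,h'\rangle=\sum_k r_k\langle e_k,h'\rangle$, I get $\EE|\langle w,h'\rangle|^p\le B_p^p(\sum_k|\langle e_k,h'\rangle|^2)^{\nicefrac p2}=B_p^p\norm{h'}_H^p$.

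The crucial feature is that $h'\in K\subset B_H$ forces $\norm{h'}_H\le1$, so this last quantity is bounded by $B_p^p$ \emph{uniformly and independently of $N$}; integrating against the probability measure $\mu$ then gives $\EE\norm{u^*w}_{E^*}\le B_p\,\pi_p(u^*)$, and combining the displays yields $A_1\sum_n\norm{ux_n}\le B_p\,\pi_p(u^*)\,W$, as desired. The main obstacle is conceptual rather than computational: one must resist the temptation to linearize $\norm{ux_n}$ through a dual unit vector $h_n=ux_n/\norm{ux_n}$ and to feed the family $(h_n)$ into the $p$-summing inequality for $u^*$, since the weak-$\ell_2$ norm of such a family of unit vectors is then uncontrolled and produces a spurious factor growing with $N$. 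Replacing the whole family of unit vectors by the single random vector $w=\sum_k r_ke_k$ is precisely what keeps the estimate $N$-free, because the relevant sum $\sum_k|\langle e_k,h'\rangle|^2$ is then taken over the orthonormal basis against the \emph{fixed} vector $h'$ and equals $\norm{h'}^2\le1$.
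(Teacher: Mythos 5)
Your proof is correct and follows exactly the route the paper indicates for this statement (which it cites from \cite[Théorème~2.21]{DJT} with the hint ``Pietsch factorization plus Khintchine''): linearize each $\Vert ux_n\Vert$ by the lower Khintchine inequality via a random sign vector over an orthonormal basis, dualize through $u^*$, and control $\EE\Vert u^*w\Vert_{E^*}$ by the Pietsch domination for $u^*$ together with the upper Khintchine inequality, the norm bound $\Vert h'\Vert\le 1$ on the Pietsch measure's support giving the $N$-free constant $B_p\,\pi_p(u^*)$. This is essentially the same argument, your truncation $\sum_{k\le K} r_k e_k$ being only a cosmetic variant of working in the finite-dimensional span of the $ux_n$.
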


\noindent Il résulte de la condition \ref{item:HS4} du
Théorème~\ref{thm:HS} que si $u:H_1\to H_2$ est un opérateur de
Hilbert-Schmidt, alors $u^*:H_2\to H_1$ l'est aussi. On en déduit le
corollaire suivant.

\begin{corollary} \label{coro:sur-Hilbert-p-sommant-egal-HS}
  Soit $u\in\BOUNDED(H_1;H_2)$. Alors les conditions suivantes
  sont équivalentes
  \begin{aufzi}
  \item \label{item:p-sommant-1} Il existe $p \ge 1$ tel que $u$ soit $p$-sommant.
  \item \label{item:p-sommant-2} $u$ est absolument sommant (donc $p$-sommant pour tout $p\ge 1$).
  \item \label{item:p-sommant-3} $u$ est un opérateur de  Hilbert-Schmidt.
  \end{aufzi}
\end{corollary}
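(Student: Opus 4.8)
Le plan est de démontrer le cycle d'implications \ref{item:p-sommant-1}$\Rightarrow$\ref{item:p-sommant-3}$\Rightarrow$\ref{item:p-sommant-2}$\Rightarrow$\ref{item:p-sommant-1}, en m'appuyant sur la dualité hilbertienne et sur le fait, rappelé juste avant l'énoncé, que l'adjoint d'un opérateur de Hilbert-Schmidt est lui-même de Hilbert-Schmidt. L'implication \ref{item:p-sommant-2}$\Rightarrow$\ref{item:p-sommant-1} est immédiate, puisque d'après la Proposition~\ref{pro:abs-sommant-egal-1-sommant} un opérateur absolument sommant n'est autre qu'un opérateur $1$-sommant; il suffit donc de prendre $p=1$.

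Pour \ref{item:p-sommant-3}$\Rightarrow$\ref{item:p-sommant-2}, je partirais d'un opérateur $u$ de Hilbert-Schmidt. Son adjoint $u^*:H_2\to H_1$ est alors de Hilbert-Schmidt, donc $2$-sommant d'après l'équivalence \ref{item:HS1}$\Leftrightarrow$\ref{item:HS5} du Théorème~\ref{thm:HS}. Le but de $u$ étant l'espace de Hilbert $H_2$, il ne reste qu'à invoquer le théorème qui précède --- selon lequel la $p$-sommabilité de l'adjoint entraîne la sommabilité absolue de l'opérateur --- avec $p=2$, pour conclure que $u$ est absolument sommant.

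L'implication \ref{item:p-sommant-1}$\Rightarrow$\ref{item:p-sommant-3} concentre la seule vraie difficulté. L'inclusion de Hölder $\Pi_p\subset\Pi_q$ n'est valable que pour $q\ge p$: elle permet de monter dans l'échelle des exposants mais non de redescendre, et ne saurait donc ramener directement une hypothèse de $p$-sommabilité avec $p>2$ à la $2$-sommabilité voulue. L'idée est de contourner cet obstacle par un passage à l'adjoint, qui traite tous les exposants de façon uniforme. Supposant $u$ $p$-sommant, je considérerais $u^*:H_2\to H_1$. Les espaces $H_1$ et $H_2$ étant réflexifs, l'opérateur $u^{**}$ s'identifie à $u$ et est donc $p$-sommant; le théorème qui précède, appliqué cette fois à $u^*$ (dont le but $H_1$ est hilbertien), entraîne alors que $u^*$ est absolument sommant, c'est à dire $1$-sommant. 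L'inclusion de Hölder $\Pi_1(H_2;H_1)\subset\Pi_2(H_2;H_1)$ montre que $u^*$ est $2$-sommant, donc de Hilbert-Schmidt par \ref{item:HS5}; enfin $u=u^{**}$ est de Hilbert-Schmidt, l'adjoint d'un opérateur de Hilbert-Schmidt l'étant encore.

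Je m'attends à ce que l'unique point méritant attention soit précisément cette descente sous l'exposant $2$: elle ne se fait qu'au prix du détour par l'adjoint, en transférant l'hypothèse portant sur $u$ en une hypothèse sur $u^{**}=u$ afin d'invoquer le théorème reliant la $p$-sommabilité de l'adjoint à la sommabilité absolue. Les vérifications restantes (l'identification $u^{**}=u$ via la réflexivité des espaces de Hilbert, et le fait que le but de l'opérateur considéré soit bien hilbertien à chaque application du théorème) sont routinières.
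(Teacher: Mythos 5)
Votre démonstration est correcte et repose sur les mêmes ingrédients que celle de l'article : l'identification $u=u^{**}$ par réflexivité, le théorème précédant l'énoncé (la $p$-sommabilité de l'adjoint entraîne la sommabilité absolue quand le but est hilbertien), l'inclusion de Hölder $\Pi_1\subset\Pi_2$ et l'équivalence \ref{item:HS5} du Théorème~\ref{thm:HS}. La seule différence est l'ordre du cycle --- vous prouvez \ref{item:p-sommant-1}$\Rightarrow$\ref{item:p-sommant-3}$\Rightarrow$\ref{item:p-sommant-2}$\Rightarrow$\ref{item:p-sommant-1} là où l'article prouve \ref{item:p-sommant-1}$\Rightarrow$\ref{item:p-sommant-2}$\Rightarrow$\ref{item:p-sommant-3}$\Rightarrow$\ref{item:p-sommant-1} en appliquant deux fois le théorème en question --- mais l'idée clé, la descente sous l'exposant $2$ via le passage à l'adjoint, est identique.
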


\begin{proof} Si $u=u^{**}$ est $p$-sommant pour un réel $p\ge 1$,
  alors $u^*$ est absolument sommant, donc $u$ l'est aussi. D'autre
  part si $u$ est absolument sommant il est $2$- sommant, donc c'est
  un opérateur de Hilbert-Schmidt. Le fait que
  \ref{item:p-sommant-3} implique \ref{item:p-sommant-1} résulte du
  fait que tout opérateur de Hilbert-Schmidt est $2$-sommant.
\end{proof}

\subsection{Inégalité de Grothendieck  et factorisations des opérateurs de Hilbert-Schmidt}

L'inégalité de Grothendieck (Théorème~\ref{thm:grothendieck-ineq}) permet
d'obtenir les majorations classiques suivantes.

\begin{corollary} Soient $n$ et $N$ deux entiers positifs, et soit $p\in [1,2]$.
\begin{aufzi}
\item Pour tout opérateur $u: \ell_1^n\to \ell_2^N$ on a $\pi_1(u)\le K_G\Vert u \Vert$.
\item Pour tout opérateur $v: \ell_{\infty}^n\to \ell_p^N$ on a $\pi_2(u)\le K_G \Vert v \Vert$.
\end{aufzi}
\end{corollary}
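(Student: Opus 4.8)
The plan is to read off both inequalities directly from Grothendieck's inequality (Theorem~\ref{thm:grothendieck-ineq}), using the identification of the supremum on its right-hand side with an $\ell_\infty\to\ell_1$ operator norm recorded just after that theorem. In each case I will test the relevant summing norm against a finite family and recognise the quantity to be estimated as a Grothendieck bilinear form $\sum_{i,j}a_{ij}\langle\xi_i,\eta_j\rangle$ with unit (or suitably normalised) vectors of $\ell_2^N$ or $\ell_2^M$.

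For (a), fix $x_1,\dots,x_M\in\ell_1^n$ and, for each $k$, choose a unit vector $z_k\in\ell_2^N$ with $\langle ux_k,z_k\rangle=\|ux_k\|$ (here the Hilbert structure of the target is used). Writing $x_k=\sum_i x_k^{(i)}e_i$ in the canonical basis of $\ell_1^n$ gives $\sum_k\|ux_k\|=\sum_{i,k}x_k^{(i)}\langle ue_i,z_k\rangle$. Putting $\xi_i=ue_i/\|u\|$, which satisfies $\|\xi_i\|\le1$ because $\|e_i\|_{\ell_1}=1$, and $\eta_k=z_k$, this equals $\|u\|\sum_{i,k}x_k^{(i)}\langle\xi_i,\eta_k\rangle$, so Theorem~\ref{thm:grothendieck-ineq} bounds it by $K_G\|u\|\sup_{|s_i|\le1,\,|t_k|\le1}\bigl|\sum_{i,k}x_k^{(i)}s_it_k\bigr|$. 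The last supremum equals $\sup_{\|s\|_\infty\le1}\sum_k|\langle x_k,s\rangle|$, which is exactly the $\ell_1^\FAIBLE(\ell_1^n)$-norm of $(x_k)$ since $(\ell_1^n)^*=\ell_\infty^n$; comparing with \eqref{eq:p-summing} yields $\pi_1(u)\le K_G\|u\|$. (The stated theorem has a square matrix and equal numbers of vectors, but one applies it inside the common space $\ell_2^N$ after padding with zeros so that the index sets match.)

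For (b) I will first settle $p=1$ by the dual form of the same computation. Given $x_1,\dots,x_M\in\ell_\infty^n$, dualise the outer sum: $\bigl(\sum_k\|vx_k\|_1^2\bigr)^{1/2}=\sup_{\|\lambda\|_2\le1}\sum_k\lambda_k\|vx_k\|_1$. With $A_{ji}=(ve_i)_j$ and $\epsilon_{jk}=\operatorname{sign}(vx_k)_j$, one has $\|vx_k\|_1=\sum_j\epsilon_{jk}(vx_k)_j$, and the inner sum becomes $\sum_{i,j}A_{ji}\langle\xi_j,\eta_i\rangle$ with $\xi_j=(\lambda_k\epsilon_{jk})_k$ and $\eta_i=(x_k^{(i)})_k$ in $\ell_2^M$. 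Here $\|\xi_j\|=\|\lambda\|_2\le1$ (because $\epsilon_{jk}=\pm1$) and $\|\eta_i\|\le\sigma$, where $\sigma=\max_i\bigl(\sum_k|x_k^{(i)}|^2\bigr)^{1/2}$ is precisely the $\ell_2^\FAIBLE(\ell_\infty^n)$-norm of $(x_k)$ (the supremum over $B_{\ell_1^n}$ is attained at the extreme points $\pm e_i$). Grothendieck's inequality, together with the identification of its right-hand side with $\|A\|_{\ell_\infty^n\to\ell_1^N}=\|v\|$, then gives $\pi_2(v)\le K_G\|v\|$.

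The remaining point, and the one I expect to be the genuine obstacle, is to pass from $p=1$ to an arbitrary $p\in[1,2]$ in (b). The direct computation breaks down for $p>1$: dualising the target $\ell_p^N$ through $\ell_{p^*}^N$ produces column vectors $\zeta_j$ whose $\ell_2$-norms are no longer controlled (for $p^*>2$ the constraint $\|w_k\|_{p^*}\le1$ does not bound $\|w_k\|_2$), and the supremum furnished by Theorem~\ref{thm:grothendieck-ineq} is intrinsically the $\ell_\infty\to\ell_1$ norm rather than the $\ell_\infty\to\ell_p$ norm. The natural remedy is to reduce to $p=1$ via the isometric embedding $\ell_p^N\hookrightarrow L^1$ given by $p$-stable random variables: composing $v$ with such an embedding $J$ leaves both $\|v\|=\|Jv\|$ and $\pi_2(v)=\pi_2(Jv)$ unchanged, since an isometric embedding of the target preserves the $2$-summing norm. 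It therefore suffices to prove the inequality for operators into $L^1$, and one recovers the finite $\ell_1^N$ situation either by approximating $J$ by a discrete $\ell_1^M$-valued embedding and letting the error tend to $0$, or by extending the finite-dimensional Grothendieck estimate to an $L^1$ target. Making this reduction precise, rather than the Grothendieck estimate itself, is where the real work lies.
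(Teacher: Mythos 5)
Your part (a) and your part (b) for $p=1$ are correct and complete. Writing the quantity to be estimated as a Grothendieck bilinear form $\sum_{i,j}a_{ij}\langle \xi_i,\eta_j\rangle$ and identifying the supremum on the right-hand side of Théorème~\ref{thm:grothendieck-ineq} with the $\ell_\infty^n\to\ell_1^N$ operator norm is exactly the classical argument behind this corollary; the paper itself gives no proof at all (it states the corollary as classical and immediately subsumes it in Théorème~\ref{thm:grothendieck}, whose proof is deferred to \cite[Chapitre 3]{DJT}), so your write-up actually supplies details the paper omits. The two weak-norm identifications you make are also correct: in (a) the supremum $\sup_{|s_i|\le1,|t_k|\le1}\bigl|\sum_{i,k}x_k^{(i)}s_it_k\bigr|$ is the $\ell_1^\FAIBLE(\ell_1^n)$-norm via $(\ell_1^n)^*=\ell_\infty^n$, and in (b) the $\ell_2^\FAIBLE(\ell_\infty^n)$-norm is attained at the extreme points $\pm e_i$ of $B_{\ell_1^n}$ by convexity, so $\Vert\eta_i\Vert\le\sigma$ as you claim.

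Part (b) for $1<p\le 2$, however, is a genuine gap, and you say so yourself. Your diagnosis of the obstruction is accurate: dualising $\Vert vx_k\Vert_p$ through $\ell_{p^*}^N$ still gives columns $\xi_j$ with $\Vert\xi_j\Vert_2\le\Vert\lambda\Vert_2$ (since the dualising functionals have sup-norm at most $1$), but Grothendieck's inequality then produces $\Vert A\Vert_{\ell_\infty^n\to\ell_1^N}$ on the right, which for $p>1$ can strictly dominate $\Vert v\Vert=\Vert A\Vert_{\ell_\infty^n\to\ell_p^N}$, so the direct computation genuinely fails. The remedy you name — an isometric embedding $J:\ell_p^N\to \ELL_1(\Omega,\PP)$ built from $p$-stable random variables, combined with the (trivial, and correctly noted) invariance of $\pi_2$ under isometric embeddings of the target — is indeed the classical route. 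But none of it is carried out: you do not construct $J$ nor verify its isometry (note in passing that $p$-stable variables have finite first moment only for $p>1$; harmless here since $p=1$ is already settled, but it must be said), and, more seriously, you do not prove the $\ELL_1$-target case to which you reduce. Passing from your finite-dimensional $\ell_1^M$ estimate to operators $\ell_\infty^n\to\ELL_1(\Omega,\PP)$ requires the local structure of $\ELL_1$ — every finite-dimensional subspace is contained in a subspace $(1+\eps)$-isomorphic to some $\ell_1^M$ — followed by a limiting argument in $\eps$ to recover the clean constant $K_G$; this is precisely the content of the $\mathcal{L}_{\infty,\lambda}\to\mathcal{L}_{p,\mu}$ statement of Théorème~\ref{thm:grothendieck}. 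So what you have left open is not a routine verification but the entire mathematical content of the case $p>1$.
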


On dit qu'un espace de Banach $E$ est $\mathcal{L}_{p,\lambda}$, avec
$1\leq p\leq\infty$ et $1\leq\lambda <+\infty$, si pour tout
sous-espace vectoriel $U$ de dimension finie de $E$, il existe un
sous-espace vectoriel $V$ de dimension finie de $E$ contenant $U$ et
un isomorphisme $\phi:V\rightarrow \ell_{p}^{\DIM(F)}$ tel que
$\left\|\phi\right\|\left\|\phi^{-1}\right\|\leq\lambda$. On dit que
$E$ est un espace $\mathcal{L}_{p}$ s'il existe $\lambda \ge 1$ tel
que $E$ soit $\mathcal{L}_{p,\lambda}$. Il résulte de \cite[Théorème
3.2]{DJT} que si $(\Omega, {\mathcal B}, \mu)$ est un espace mesuré
alors $\ELL_p(\Omega, \mu)$ est un espace $\ELL_{p,\lambda}$ pour tout
$\lambda>1$ si $1\le p \le {\infty}$. De même si $K$ est compact alors
${\mathcal C}(K)$ est un espace $\ELL_{\infty, \lambda}$ pour tout
$\lambda >1$, ce qui implique le résultat ci-dessus pour $p=\infty$
puisque la transformation de Gelfand est un isomorphisme de
$\ELL_{\infty}(\Omega,\mu)$ sur ${\mathcal C}\left (\widehat
  {\ELL_{\infty}(\Omega, \mu)}\right )$ où $\widehat {\ELL_{\infty}(\Omega,
  \mu)}$ désigne l'espace compact formé des caractères de l'algèbre de
Banach $\ELL_{\infty}(\Omega,\mu)$, voir par exemple \cite{DAL}.

Le célèbre théorème de Grothendieck qui est un corollaire de
l'inégalité de Grothendieck (théorème~\ref{thm:grothendieck-ineq})
démontre que tout opérateur linéaire continu $u \in \BOUNDED(\ell_1 ;
\ell_2)$ est absolument sommant. On a plus généralement le résultat
suivant, dont on trouvera une démonstration dans \cite[Chapitre
3]{DJT}.

\begin{theorem} \label{thm:grothendieck}
  \begin{aufzi}
  \item Soit $E$ un espace $\mathcal{L}_{1,\lambda}$ et soit $F$ un
    espace $\mathcal{L}_{2,\mu}$. Alors tout opérateur linéaire borné
    $u: E \to F$ est absolument sommant et on a
\[
    \pi_1(u)\le  \lambda \, \mu \, K_G\, \Vert u \Vert _{\BOUNDED(E; F)}.
\]
\item Soit $E$ un espace $\mathcal{L}_{\infty,\lambda}$ et soit $F$ un
  espace $\mathcal{L}_{p,\mu}$ avec $0 \le p \le 2$. Alors tout
  opérateur linéaire borné $v: E \to F$ est $2$-sommant et on a
\[
    \pi_2(u)\le \lambda \, \mu \, K_G \, \Vert v \Vert _{\BOUNDED(E; F)}.
\]
\end{aufzi}
\end{theorem}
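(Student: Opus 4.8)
Le plan est de démontrer les deux assertions par un même schéma, en utilisant la caractérisation finie des opérateurs $p$-sommants (l'inégalité~(\ref{eq:p-summing})), la structure locale $\mathcal{L}_p$ de $E$ et de $F$, et la conséquence en dimension finie de l'inégalité de Grothendieck établie au corollaire précédent. Pour l'assertion~(a), il suffit de majorer $\sum_{n=1}^N \norm{u x_n}$ par $\lambda \mu K_G \norm{u}$ fois $\sup_{\norm{x^*}\le 1}\sum_{n=1}^N |\langle x_n, x^*\rangle|$ pour toute famille finie $x_1,\dots,x_N \in E$: c'est exactement la condition définissant les opérateurs $1$-sommants (c'est-à-dire absolument sommants) avec constante $\lambda \mu K_G \norm{u}$.

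Fixons une telle famille. On pose d'abord $U \subset E$ le sous-espace de dimension finie engendré par $x_1,\dots,x_N$. Comme $E$ est $\mathcal{L}_{1,\lambda}$, il existe un sous-espace de dimension finie $V \supset U$ et un isomorphisme $\phi: V \to \ell_1^m$ tels que $\norm{\phi}\,\norm{\phi^{-1}} \le \lambda$. L'image $u(V)$ est un sous-espace de dimension finie de $F$; comme $F$ est $\mathcal{L}_{2,\mu}$, il existe un sous-espace de dimension finie $W \supset u(V)$ et un isomorphisme $\psi: W \to \ell_2^k$ tels que $\norm{\psi}\,\norm{\psi^{-1}} \le \mu$. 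En posant $\xi_n = \phi(x_n) \in \ell_1^m$, l'opérateur $T = \psi \circ (u|_V) \circ \phi^{-1}: \ell_1^m \to \ell_2^k$ vérifie $\norm{T} \le \norm{\psi}\,\norm{u}\,\norm{\phi^{-1}}$ et $T\xi_n = \psi(u x_n)$. Par la partie~(a) du corollaire précédent, $T$ est $1$-sommant et $\pi_1(T) \le K_G \norm{T}$.

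Le cœur de l'argument consiste alors à transférer les deux membres de l'inégalité de $1$-sommabilité de $T$ vers $E$ et $F$. À gauche, $\norm{u x_n} \le \norm{\psi^{-1}}\,\norm{T\xi_n}$, d'où $\sum_n \norm{u x_n} \le \norm{\psi^{-1}}\sum_n \norm{T\xi_n}$. À droite, pour $\eta$ dans la boule unité de $(\ell_1^m)^* = \ell_\infty^m$, on a $\langle \xi_n, \eta\rangle = \langle x_n, \phi^*(\eta)\rangle$; en prolongeant $\phi^*(\eta) \in V^*$ en un $x^* \in E^*$ par Hahn-Banach avec $\norm{x^*} = \norm{\phi^*(\eta)} \le \norm{\phi}$, on obtient $\sup_{\norm{\eta}\le 1}\sum_n |\langle \xi_n,\eta\rangle| \le \norm{\phi}\, \sup_{\norm{x^*}\le 1}\sum_n|\langle x_n,x^*\rangle|$. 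En combinant ces estimations avec $\pi_1(T)\le K_G\norm{T}$ et les inégalités $\norm{\psi}\,\norm{\psi^{-1}}\le\mu$, $\norm{\phi}\,\norm{\phi^{-1}}\le\lambda$, il vient exactement $\sum_n\norm{u x_n} \le \lambda\mu K_G\norm{u} \sup_{\norm{x^*}\le 1}\sum_n|\langle x_n,x^*\rangle|$, ce qui est l'assertion cherchée.

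L'assertion~(b) résulte du même schéma, en remplaçant $\ell_1$ par $\ell_\infty$ à la source, $\ell_2$ par $\ell_p$ ($1\le p\le 2$) au but, l'inégalité de $1$-sommabilité par celle de $2$-sommabilité, et en invoquant la partie~(b) du corollaire pour obtenir $\pi_2(T)\le K_G\norm{T}$; le même transfert par Hahn-Banach, appliqué cette fois aux normes $\ell_2$-faibles, donne $\pi_2(v)\le\lambda\mu K_G\norm{v}$. Je m'attends à ce que la principale difficulté soit purement technique: réaliser honnêtement l'approximation locale, à savoir plonger l'image de dimension finie $u(V)$ dans un bloc $\ell_2^k$ de $F$ via la propriété $\mathcal{L}_{2,\mu}$, et propager correctement les normes de $\phi$, $\psi$ et du prolongement de Hahn-Banach à travers la composition pour que les constantes se regroupent en $\lambda\mu K_G$. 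Aucune étape n'est profonde en elle-même; tout le contenu est porté par le corollaire de Grothendieck en dimension finie, la structure $\mathcal{L}_p$ ne servant qu'à s'y ramener une famille finie à la fois.
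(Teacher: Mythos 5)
Votre démonstration est correcte : la localisation via la structure $\mathcal{L}_{p,\lambda}$, la réduction au cas de dimension finie traité par le corollaire de l'inégalité de Grothendieck, puis le transfert des normes faibles par Hahn--Banach donnent bien la constante $\lambda\mu K_G\Vert u\Vert$, et le même schéma s'applique au cas $2$-sommant. Le papier ne donne d'ailleurs aucune preuve de ce théorème (il renvoie au chapitre~3 de \cite{DJT}), et votre argument est précisément l'argument standard de cette référence.
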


\noindent On obtient alors une caractérisation classique des opérateurs de
Hilbert-Schmidt qui s'exprime en termes de factorisation, voir
\cite[page 85]{DJT}.

\begin{corollary}\label{cor:HS_equivalences}
Soit $u: \BOUNDED(H_1; H_2)$ un opérateur. Les assertions suivantes sont équivalentes.
\begin{aufzi}
 \item\label{item:HS_equivalences-1} $u$ est un opérateur de Hilbert-Schmidt.
 \item\label{item:HS_equivalences-2}  $u$ est factorisable à   travers un espace $\mathcal{L}_\infty$.
 \item\label{item:HS_equivalences-3} $u$ est factorisable à   travers un espace $\mathcal{L}_1$.
 \end{aufzi}
\end{corollary}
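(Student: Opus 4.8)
Le plan consiste à traiter séparément les implications directes \ref{item:HS_equivalences-1} $\Rightarrow$ \ref{item:HS_equivalences-2} et \ref{item:HS_equivalences-1} $\Rightarrow$ \ref{item:HS_equivalences-3}, puis les réciproques \ref{item:HS_equivalences-2} $\Rightarrow$ \ref{item:HS_equivalences-1} et \ref{item:HS_equivalences-3} $\Rightarrow$ \ref{item:HS_equivalences-1}. Pour les implications directes je compte exhiber des factorisations explicites fondées sur la décomposition en valeurs singulières donnée par la condition \ref{item:HS4} du Théorème~\ref{thm:HS}; pour les réciproques je combinerai l'inégalité de Grothendieck (Théorème~\ref{thm:grothendieck}) avec la propriété d'idéal et l'identité entre opérateurs $2$-sommants et opérateurs de Hilbert-Schmidt (condition \ref{item:HS5} du Théorème~\ref{thm:HS}).

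Supposons d'abord $u$ de Hilbert-Schmidt et écrivons $ux=\sum_n\tau_n\langle x,e_n\rangle f_n$ avec $(\tau_n)\in\ell_2$, $(e_n)$ orthonormale dans $H_1$ et $(f_n)$ orthonormale dans $H_2$. Pour \ref{item:HS_equivalences-2}, je factoriserai $u$ à travers $\ell_\infty$ (qui est un espace $\mathcal{L}_\infty$, cf.\ les exemples rappelés plus haut) en posant $a(x)=(\langle x,e_n\rangle)_n$ et $b\bigl((s_n)_n\bigr)=\sum_n\tau_n s_n f_n$. L'opérateur $a\colon H_1\to\ell_\infty$ est borné puisque $\|a(x)\|_\infty\le\|x\|$, et $b\colon\ell_\infty\to H_2$ l'est aussi car $\|b((s_n))\|^2=\sum_n\tau_n^2|s_n|^2\le\|(\tau_n)\|_2^2\,\|(s_n)\|_\infty^2$; on vérifie aussitôt que $b\circ a=u$. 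Pour \ref{item:HS_equivalences-3}, j'échangerai les rôles et factoriserai à travers $\ell_1$ (espace $\mathcal{L}_1$) via $a(x)=(\tau_n\langle x,e_n\rangle)_n$, borné dans $\ell_1$ par l'inégalité de Cauchy--Schwarz (on a $\|a(x)\|_1\le\|(\tau_n)\|_2\,\|x\|$), et $b((s_n))=\sum_n s_n f_n$, borné de $\ell_1$ dans $H_2$; de nouveau $b\circ a=u$.

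Pour les réciproques, partons d'une factorisation $u=b\circ a$ avec $a\colon H_1\to L$ et $b\colon L\to H_2$ bornés. Le point décisif est que tout espace de Hilbert est un espace $\mathcal{L}_2$ (étant isométrique à un $\ell_2(I)$), ce qui autorise à appliquer le Théorème~\ref{thm:grothendieck} au facteur d'arrivée $b$. Si $L$ est un espace $\mathcal{L}_\infty$, le Théorème~\ref{thm:grothendieck}\,(b) appliqué avec $p=2$ montre que $b$ est $2$-sommant; si $L$ est un espace $\mathcal{L}_1$, le Théorème~\ref{thm:grothendieck}\,(a) montre que $b$ est absolument sommant, donc $2$-sommant grâce à l'inclusion $\Pi_1\subset\Pi_2$. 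Dans les deux cas, la propriété d'idéal entraîne que $u=b\circ a$ est $2$-sommant, et la condition \ref{item:HS5} du Théorème~\ref{thm:HS} permet de conclure que $u$ est de Hilbert-Schmidt.

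La partie réellement délicate n'est pas calculatoire: elle tient au choix correct du découpage $\tau_n=\lambda_n\mu_n$ garantissant que les deux facteurs atterrissent dans les bons espaces de suites (pour $\ell_\infty$ on prend $\lambda_n\equiv 1$ et $\mu_n=\tau_n\in\ell_2$, et l'on permute ces rôles pour $\ell_1$), ainsi qu'à l'identification de $H_1$ et $H_2$ comme espaces $\mathcal{L}_2$, seul ingrédient permettant d'invoquer l'inégalité de Grothendieck sous la forme du Théorème~\ref{thm:grothendieck} dans les implications réciproques.
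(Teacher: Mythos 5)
Votre démonstration est correcte, mais elle suit, pour les implications directes, un chemin réellement différent de celui du texte. Pour \ref{item:HS_equivalences-1} $\Rightarrow$ \ref{item:HS_equivalences-2}, le texte ne construit aucune factorisation explicite : il observe qu'un opérateur de Hilbert-Schmidt est $2$-sommant et invoque le théorème de factorisation de Pietsch (Théorème~\ref{thm:pietsch}) pour factoriser $u$ à travers un espace ${\mathcal C}(K)$, qui est un espace $\mathcal{L}_\infty$. Pour \ref{item:HS_equivalences-1} $\Rightarrow$ \ref{item:HS_equivalences-3}, il procède par dualité : $u^*$ est de Hilbert-Schmidt, donc se factorise à travers un ${\mathcal C}(K)$, et en dualisant cette factorisation on fait passer $u=u^{**}$ à travers le dual de ${\mathcal C}(K)$, qui est un espace $\mathcal{L}_1$. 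Vous remplacez ces deux arguments par des factorisations explicites à travers $\ell_\infty$ et $\ell_1$, construites à partir de la représentation \ref{item:HS4} du Théorème~\ref{thm:HS} en plaçant le poids $(\tau_n)\in\ell_2$ sur l'un ou l'autre des deux facteurs ; vos estimations (Cauchy--Schwarz, inégalité de Bessel, orthonormalité de $(f_n)$) sont exactes et garantissent la bornitude des facteurs ainsi que la convergence des séries en jeu. Votre variante est plus élémentaire et constructive : elle évite à la fois le théorème de Pietsch et l'argument de dualité, et traite les cas $\mathcal{L}_\infty$ et $\mathcal{L}_1$ de manière symétrique. L'approche du texte, moins explicite, met en revanche en évidence le lien structurel entre opérateurs de Hilbert-Schmidt, opérateurs $2$-sommants et factorisation à la Pietsch, lien qui est exploité ailleurs dans l'article. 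Pour les réciproques, votre argument (Théorème~\ref{thm:grothendieck} appliqué au facteur d'arrivée à valeurs dans $H_2$ vu comme espace $\mathcal{L}_2$, point que vous faites bien d'expliciter alors que le texte l'utilise tacitement, puis propriété d'idéal et identification des opérateurs $2$-sommants avec les opérateurs de Hilbert-Schmidt via la condition \ref{item:HS5} du Théorème~\ref{thm:HS}) coïncide exactement avec celui du texte.
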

\begin{proof}
  \ref{item:HS_equivalences-1} $\Leftrightarrow$
  \ref{item:HS_equivalences-2}: Si $u$ est de Hilbert-Schmidt, alors
  $u$ est $2$-sommant. Donc $u$ est factorisable à travers un espace
  $C\left(K\right)$ d'après le théorème de factorisation de Pietsch,
  et \ref{item:HS_equivalences-2} est vérifié puisque ${\mathcal
    C}(K)$ est un espace $\ELL_{\infty}$. D'autre part si $u$ est
  factorisable à travers un espace $\mathcal{L}_\infty$, alors $u$ est
  $2$-sommant d'après le théorème~\ref{thm:grothendieck} et le
  principe d'idéal. Par conséquent $u$ est de Hilbert-Schmidt.

  \ref{item:HS_equivalences-1}$\Leftrightarrow$~\ref{item:HS_equivalences-3}:
  Cette équivalence entre est une version duale de la première
  équivalence. On sait que le dual de l'espace ${\mathcal C}\left(K\right)$ est
  un espace $\mathcal{L}_1$. Si $u \in {\mathcal L}(H_1; H_2)$ est de
  Hilbert-Schmidt, alors $u^*$ est de Hilbert-Schmidt. D'après la
  condition \ref{item:HS4} du théorème~\ref{thm:HS}, $u^*$ admet donc
  une factorisation de la forme: 
$u^*:\; H_2 \longrightarrow   {\mathcal C}\left(K\right) \longrightarrow H_1$ et $u=u^{**}$ admet une
  factorisation 
$u:\;  H_1\longrightarrow\mathcal{L}_1\longrightarrow H_2$.
  Réciproquement, si $u:H_1\rightarrow H_2$ est factorisable à travers
  un espace $\mathcal{L}_1$, alors on déduit du
  théorème~\ref{thm:grothendieck} que $u$ est $2$-sommant, donc c'est
  un opérateur de Hilbert-Schmidt.
\end{proof}

\subsection{Opérateurs de Hilbert-Schmidt et nucléarité}


 \begin{definition}
   Soit $1\leq p < \infty$.  Un opérateur $u \in \BOUNDED(E; F)$ est
   $p$-nucléaire s'il existe des opérateurs
   $a\in\mathcal{L}(\ell_p;F)$, $b\in\mathcal{L}(E; \ell_\infty)$ et
   une suite $\lambda=(\lambda_n)_{n\ge 1}\in\ell_p$ tels que
   $u=a\circ M_{\lambda}\circ b$ où $M_{\lambda} \in
   \BOUNDED(\ell_\infty; \ell_p)$ désigne l'opérateur de
   multiplication par $\lambda$.

   \noindent On note $\mathcal{N}_p\left(E;F\right)$ l'ensemble des
   opérateurs $p$-nucléaires de $E$ dans $F$ et on pose
   $\nu_p\left(u\right)=\inf\left\|a\right\|\left\|\lambda\right\|_{\ell_p}\left\|b\right\|$,
   l'infimum étant pris sur toutes les factorisations du type
   ci-dessus.
\end{definition}

\noindent Les opérateurs $1$-nucléaires sont souvent appelés 
opérateurs nucléaires.

\begin{definition}
Soit $u\in\mathcal{L}\left(E;F\right)$.
\begin{aufzi}
\item On dit que $u$ est faiblement $p$-nucléaire si $u=\sum_n x_n^*{\otimes} y_n$ avec $(x_n^*)_{n\ge
    1}\in\ell_p^\FAIBLE(E^*)$ et $(y_n)_{n\ge 1}\in\ell_q(F)$. 
\item On dit que $u$ est faiblement${}^\ast$ $p$-nucléaire si $u=\sum_n
  x_n^*{\otimes} y_n$ avec $(x_n^*)_{n\ge
    1}\in\ell_p^{\FAIBLE\ast}(E^*)$ et $(y_n)_{n\ge 1}\in\ell_q(F)$.
\end{aufzi}
\end{definition} 

\noindent On remarquera que tout opérateur faiblement $p$-nucléaire est
également faiblement${}^\ast$ $p$-nucléaire. De même tout opérateur
faiblement${}^\ast$ nucléaire d'un espace réflexif $E$ dans un espace $F$
est faiblement nucléaire.

\begin{proposition}\label{prop:faiblement-1-nuc-equals-HS}
  Soit $u\in\mathcal{L}\left(H_1,H_2\right)$. Alors $u$ est faiblement${}^\ast$ $1$-nucléaire si et seulement si $u$ est de Hilbert-Schmidt.
\end{proposition}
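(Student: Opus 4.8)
Le plan est d'établir séparément les deux implications. L'outil central pour le sens «faiblement$^\ast$ $1$-nucléaire $\Rightarrow$ Hilbert-Schmidt» sera la caractérisation des opérateurs de Hilbert-Schmidt par factorisation à travers un espace $\mathcal{L}_1$ (Corollaire~\ref{cor:HS_equivalences}~\ref{item:HS_equivalences-3}), et pour la réciproque la représentation spectrale \ref{item:HS4} du Théorème~\ref{thm:HS}. Une remarque préliminaire simplifie tout: puisque $H_1$ est réflexif, les topologies faible et faible$^\ast$ sur $H_1^*$ coïncident, donc $\ell_1^{\FAIBLE\ast}(H_1^*)=\ell_1^\FAIBLE(H_1^*)$, et tout opérateur défini sur $H_1$ est faiblement$^\ast$ $1$-nucléaire si et seulement s'il est faiblement $1$-nucléaire. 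Via l'isomorphisme de Riesz $H_1^*\cong H_1$, j'écrirai $x_n^*=\langle\cdot,a_n\rangle$ avec $a_n\in H_1$, et j'utiliserai que pour $p=1$ la seconde suite vit dans $\ell_{p^*}(H_2)=\ell_\infty(H_2)$.

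Pour l'implication directe, je pars de $u=\sum_n x_n^*{\otimes}y_n$ avec $(a_n)_{n\ge1}\in\ell_1^\FAIBLE(H_1)$ et $(y_n)_{n\ge1}\in\ell_\infty(H_2)$, disons $\norm{y_n}\le M$. L'idée est de factoriser $u$ à travers $\ell_1$. Je pose $v:H_1\to\ell_1$, $v(x)=(\langle x,a_n\rangle)_{n\ge1}$: la définition de $\ell_1^\FAIBLE(H_1)$ donne $\norm{vx}_{\ell_1}=\sum_n|\langle x,a_n\rangle|\le\norm{(a_n)}_{\ell_1^\FAIBLE(H_1)}\norm{x}$, donc $v$ est bien borné \emph{et à valeurs dans $\ell_1$}. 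Je pose ensuite $w:\ell_1\to H_2$, $w((\beta_n)_n)=\sum_n\beta_n y_n$, qui est borné puisque $\norm{w((\beta_n)_n)}\le M\norm{(\beta_n)_n}_{\ell_1}$. Comme la série $\sum_n\langle x,a_n\rangle y_n$ converge absolument, on a $w\circ v=u$. Ainsi $u$ se factorise à travers $\ell_1$, qui est un espace $\mathcal{L}_1$ (c'est le $\ELL_1$ de la mesure de comptage sur $\N$), et le Corollaire~\ref{cor:HS_equivalences} assure que $u$ est Hilbert-Schmidt.

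Pour la réciproque, je pars d'un opérateur de Hilbert-Schmidt $u$ et j'invoque \ref{item:HS4} du Théorème~\ref{thm:HS}: il existe des suites orthonormales $(e_n)$ de $H_1$ et $(f_n)$ de $H_2$, et $(\tau_n)\in\ell_2$, telles que $ux=\sum_n\tau_n\langle x,e_n\rangle f_n$. Je pose $x_n^*=\tau_n\langle\cdot,e_n\rangle$ (donc $a_n=\tau_n e_n$) et $y_n=f_n$, de sorte que $u=\sum_n x_n^*{\otimes}y_n$, la série convergeant même en norme d'opérateur puisque $\tau_n\to0$. La suite $(y_n)=(f_n)$ est bornée, donc dans $\ell_\infty(H_2)$; et pour $(a_n)$, l'inégalité de Cauchy-Schwarz donne, pour $\norm{x}\le1$,
\[
  \sum_n|\langle x,\tau_n e_n\rangle|=\sum_n|\tau_n|\,|\langle x,e_n\rangle|\le\Bigl(\sum_n|\tau_n|^2\Bigr)^\demi\Bigl(\sum_n|\langle x,e_n\rangle|^2\Bigr)^\demi\le\norm{(\tau_n)}_{\ell_2},
\]
si bien que $(a_n)\in\ell_1^\FAIBLE(H_1)$. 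Compte tenu de l'égalité faible$=$faible$^\ast$, $u$ est faiblement$^\ast$ $1$-nucléaire.

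Le point le plus délicat est la factorisation à travers $\ell_1$ du sens direct: il faut voir que c'est précisément la sommabilité \emph{faible} $\ell_1$ de $(a_n)$ — et non une simple bornitude — qui force $v$ à prendre ses valeurs dans $\ell_1$, puis mobiliser la caractérisation non triviale des opérateurs de Hilbert-Schmidt par factorisation $\mathcal{L}_1$, qui repose in fine sur l'inégalité de Grothendieck. Le reste se réduit à des vérifications directes.
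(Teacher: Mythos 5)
Votre preuve est correcte et suit essentiellement la même démarche que celle de l'article : pour le sens « Hilbert-Schmidt $\Rightarrow$ faiblement$^\ast$ $1$-nucléaire », la représentation \ref{item:HS4} du Théorème~\ref{thm:HS} combinée à Cauchy-Schwarz et Bessel ; pour la réciproque, la factorisation $u=w\circ v$ à travers $\ell_1$ suivie du Corollaire~\ref{cor:HS_equivalences}. Vos ajouts (la remarque sur la réflexivité identifiant faible et faible$^\ast$, et la convergence en norme d'opérateur de la série) sont des précisions correctes mais non essentielles qui ne changent pas la structure de l'argument.
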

\begin{proof}
  Soit $u \in\mathcal{L}\left(H_1,H_2\right)$ un opérateur de
  Hilbert-Schmidt. On a $u=\sum_n \lambda_n e_n^*{\otimes} e_n$, avec
  $(e_n^*)$ et $(e_n)$ des familles orthonormales de $H_1$ et $H_2$,
  et $(\lambda_n) \in \ell_2$. On pose $x_n^*=\lambda_n e_n^*$. On a,
  pour $x \in H_1$,
\begin{align*}
\sum_{n=1}^{+\infty}\left|\left\langle x_n^*,x\right\rangle\right|\
=& \; \sum_{n=1}^{+\infty}\left|\left\langle \lambda_n e_n^*,x\right\rangle\right|
\; =  \; \sum_{n=1}^{+\infty}\left|\lambda_n\right|\left|\left\langle e_n^*,x\right\rangle\right|\\
\leq& \; \left(\sum_{n=1}^{+\infty}\left|\lambda_n\right|^2\right)^{\demi}\left(\sum_{n=1}^\infty\left|\left\langle e_n^*,x\right\rangle\right|^2\right)^{\demi}
\; \leq \; \left\|x\right\|\left(\sum_{n=1}^{+\infty}\left|\lambda_n\right|^2\right)^{\demi}
\end{align*}
Donc $u=\sum _n x_n^*{\otimes} e_n$ avec $(e_n)_{n\ge
  1}\in\ell_\infty(H_2)$ et $(x_n^*)_{n\ge
  1}\in\ell_1^{\FAIBLE\ast}(H_1)$. Par définition, $u $ est donc faiblement$\ast
1$-nucléaire.

\smallskip

\noindent Réciproquement, soit $u=\sum_n x_n^* {\otimes} y_n$ avec
$(x_n^*)_{n\ge 1} \in \ell_1^{\FAIBLE\ast}(H_1)$ et $(y_n)_{n\ge 1}
\in \ell_{\infty}(H_2)$ un opérateur faiblement $1$-nucléaire. On a $u
h =\sum _n \left\langle h ,x_n^*\right\rangle y_n$.  On considère les
opérateurs \ $ v : H_1\rightarrow \ell_1$ , $h\longmapsto
(\left\langle h, x_n^*\right\rangle)_{n\ge 1}$ et $w
:\ell_1\rightarrow H_2$ , $(\alpha_n)_{n\ge 1}$ $\longmapsto \sum _n
\alpha_n y_n$. Donc $u=w\circ v$. On voit bien que $u$ se factorise à
travers l'espace $\ell_1$ et d'après le
Corollaire~\ref{cor:HS_equivalences}, $u$ est de Hilbert-Schmidt.
\end{proof}

\subsection{Opérateurs $\gamma$-sommants et $\err$-sommants}


\noindent L'orthonormalisation d'une suite indépendante de Gaussiennes
ou de variables Rademacher implique que
\[
   \EE \Bigl| \sum_n r_n \langle x_n, x^* \rangle \Bigr|^2 = \EE \Bigl| \sum_n \gamma_n \langle x_n, x^* \rangle \Bigr|^2 = \sum_n \bigl|\langle x_n, x^* \rangle\bigr|^2
\]
ce qui a pour effet que $\GAUSSbdd^\FAIBLE(E) = \RADbdd^\FAIBLE(E) =
\ell_2^\FAIBLE(E)$, o\`u $\GAUSSbdd^\FAIBLE(E)$ et  $\RADbdd^\FAIBLE(E)$ sont les analogues "faibles" des espaces $\GAUSSbdd(E)$ et  $\RADbdd(E)$ introduits à la section 2. Ceci motive la définition suivante :

\begin{definition}[Opérateurs $\gamma$-sommants et $\err$-sommants]\label{def:gamma-summing and R-summing}
  Soit $u \in {\mathcal L}(E; F)$. Alors $u$ est appelé Gauss-sommant ou
  $\gamma$-sommant si l'image de toute suite $(x_n)_n \in\ell_2^\FAIBLE(E)$
  appartient à $\GAUSSbdd(F)$.  

\smallskip

  \noindent De même, $u$ est appelé
  Rademacher-sommant ou $\err$-sommant, si l'image de toute suite
  $(x_n)_n \in \ell_2^\FAIBLE(E)$ appartient à $\RADbdd(F)$. On note alors $u \in
  \gamma^\infty(E; F)$, respectivement $u \in \err^\infty(E; F)$.
\end{definition}

\noindent Les opérateurs $\gamma$-sommant ou $R$-sommants
sont évidemment continus. Par le théorème du graphe fermé on obtient dans les
deux cas l'existence d'une constante $C\geq 0$ vérifiant, pour toute
famille finie $\left(x_1,\dots,x_N\right)$ d'éléments de $E$,
\begin{equation}\label{eq gamma} 
  \Bigl(\EE\Bigl\|\sum_{n=1}^N\gamma_n u x_n\Bigr\|_F^2\Bigr)^\demi\leq C \sup_{x^*\in B_{E^*}}\Bigl(\sum_{n=1}^N\left|\left\langle x_n ,x^*\right\rangle\Bigr|^2\right)^\demi
\qquad \text{si } \, u \in \gamma^\infty(E; F),
\end{equation}
respectivement
\begin{equation}\label{eq R} 
  \Bigl(\EE\Bigl\|\sum_{n=1}^N r_n u x_n\Bigr\|_F^2\Bigr)^\demi\leq C \sup_{x^*\in B_{E^*}}\Bigl(\sum_{n=1}^N\left|\left\langle x_n ,x^*\right\rangle\right|^2\Bigr)^\demi
\qquad \text{si } \, u \in \err^\infty(E; F),
\end{equation}
On note alors $\norm{ u }_{\gamma^\infty}$ respectivement $\norm{ u
}_{R^\infty}$ les meilleures constantes dans (\ref{eq gamma}),
respectivement (\ref{eq R}). Remarquons que l'inégalité de Kahane
(Théorème~\ref{thm:kahane}) permet de passer à une norme équivalente
en remplaçant les normes $\ELL_2$ en normes $\ELL_p$ sur le coté gauche de
(\ref{eq gamma}) et (\ref{eq R}).

\bigskip

\begin{remark}
  Dans le cas où $E=H$ est un espace Hilbertien, l'espace
  $\gamma_2^\infty(H; F)$ est parfois défini dans la littérature par
  la propriété
\begin{equation}  \label{eq:gamma-summing-equiv}
    \sup_{\Lambda}\;   \EE \Bignorm{\sum_{n} \gamma_n u \, h_n }^2 \; \leq \; C^2.
\end{equation}
où le supremum est pris sur l'ensemble $\Lambda$ de tous les systèmes orthonormaux 
finis $(h_n)_n$ d'éléments de $H$\footnote{La définition vise de couvrir des
  espaces de Hilbert non-séparables ; dans le cas d'un Hilbert séparable de dimension infinie
  on peut se limiter aux familles $\{e_1,\dots,e_N\}$ pour $N\ge 1$, o\`u $(e_n)_{n\ge 1}$ désigne une base hilbertienne de $H$}.
Les deux notions coïncident: en
effet, notons que toute suite orthonormale $(e_n)_{n\ge 1}$ définit une suite
de $\ell_2^\FAIBLE(H)$; Ainsi (\ref{eq gamma}) implique
(\ref{eq:gamma-summing-equiv}) avec $C = \norm{ u
}_{\gamma^\infty}$. Réciproquement, supposons
(\ref{eq:gamma-summing-equiv}) vérifié pour tout système orthonormal fini.  On
sait alors (voir, par exemple \cite[Théorème 6.2]{jvn}), que $u \circ
v$ satisfait (\ref{eq:gamma-summing-equiv}) pour tout opérateur $v \in
\BOUNDED(H)$. Pour montrer (\ref{eq gamma}), soit $(x_n) \in
\ell_2^\FAIBLE(H)$ donné. Pour un système orthonormé $(h_n)$ de $H$ on
pose $v(h_n) := x_n$. Alors $v$ définit un opérateur linéaire continu
sur $H$ qui satisfait $\norm{v} = \norm{(x_n)_{n
    \in\N}}_{\ell_2^\FAIBLE(H)}$ : on en déduit (\ref{eq gamma}) par
la propriété d'idéal.
\end{remark}

Observons que les suites $\gamma$-sommables (respectivement
Rademacher-sommables) peuvent être confondues avec les opérateurs
$\gamma$-sommants (respectivement Rademacher-sommants) $u: \ell_2 \to
E$; de ce fait, par abus de notation, on peut écrire
$\gamma^\infty(\ell_2; E) = \gamma^\infty(E)$ et 
$\err^\infty(\ell_2; E) = \err^\infty(E)$ respectivement.

\medskip

A partir de (\ref{eq gamma}) et (\ref{eq R}) on vérifie aussitôt que
$\gamma^\infty(E: F)$ et $\err^\infty(E; F)$ vérifient la propriété
d'idéal: si $E$, $F$, $X$, $Y$ sont des espaces de Banach et si
$v\in\mathcal{L}\left(X;E\right)$, $w\in\mathcal{L}\left(F;Y\right)$
et $u \in \gamma^\infty(E; F)$ (respectivement $u\in
\err^\infty\left(E;F\right)$), alors la composition $w\circ u\circ v$
est $\gamma$-sommant respectivement $\err$-sommant et on a
\[
\left\|w\circ u\circ  v\right\|_{\gamma^\infty\left(X;Y \right)}
\leq
\left\|v\right\|\left\|u\right\|_{\gamma^\infty\left(E;F\right)}\left\|w\right\|,
\]
respectivement 
\[
\left\|w\circ u\circ  v\right\|_{\err^\infty\left(E;F\right)} 
\leq
\left\|v\right\|\left\|u\right\|_{\err^\infty\left(E;F\right)}\left\|w\right\|.
\]

\medskip

On déduit du "principe de comparaison", voir \cite[Théorème~3.2]{vdn}
que $\gamma^\infty(F)\subset \err^\infty(F)$. Si on pose
$m_1=\sqrt{\nicefrac{2}{\pi}}$ on a précisément 
\[
 \left \Vert (y_n) \right \Vert _{\err^\infty(F)} 
\le  m_1^{-1}
\left \Vert  (y_n) \right \Vert _{\gamma^\infty(F)}
\]
pour $(x_n)_{n\ge 1} \in \gamma^\infty(F)$. Ceci est une conséquence
facile du principe de contraction de Kahane et du fait que les
variables aléatoires $\gamma_n$ et $r_n |\gamma_n|$ ont la même
distribution.  En général, les espaces $\gamma^\infty(F)$ et
$\err^\infty(F)$ sont distincts, mais le Lemme~12.14 de \cite{DJT}
donne des "comparaisons en moyenne" qui permettent de démontrer le
résultat suivant.

\begin{theorem} \label{thm:gamma-somm-egal-err-somm}
  Soient $E$ et $F$ deux espaces de Banach, et soit $u \in {\mathcal
    L}(E; F)$. Alors $u$ est Rademacher-sommant si et seulement si $u$
  est $\gamma$-sommant, et dans ce cas on a
\[
    m_1\Vert u \Vert_{\err^\infty(E; F)}\le \Vert u \Vert_{\gamma^{\infty}(E; F)}\le \Vert u \Vert_{\err^\infty(E; F)}.
\]
\end{theorem}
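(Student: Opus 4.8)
The plan is to prove the two inequalities separately. The left-hand bound is essentially a restatement of the sequence-space comparison recalled just above the theorem, whereas the right-hand bound $\norm{u}_{\gamma^\infty(E;F)}\le\norm{u}_{\err^\infty(E;F)}$ carries the real content, because the two \emph{sequence} spaces $\gamma^\infty(F)$ and $\err^\infty(F)$ genuinely differ in general; the coincidence occurs only at the level of operators, so the proof must exploit the freedom to choose the test sequence $(x_n)$.

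First, suppose $u$ is $\gamma$-sommant. For every finite family $(x_n)$ of $E$ the image $(ux_n)$ lies in $\gamma^\infty(F)\subset\err^\infty(F)$, and the comparison $\norm{(ux_n)}_{\err^\infty(F)}\le m_1^{-1}\norm{(ux_n)}_{\gamma^\infty(F)}$ stated above, together with the defining inequalities (\ref{eq gamma}) and (\ref{eq R}), gives at once $\norm{u}_{\err^\infty(E;F)}\le m_1^{-1}\norm{u}_{\gamma^\infty(E;F)}$, that is $m_1\norm{u}_{\err^\infty(E;F)}\le\norm{u}_{\gamma^\infty(E;F)}$. In particular every $\gamma$-sommant operator is $\err$-sommant.

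For the reverse inequality, assume $u$ is $\err$-sommant and fix a finite family $(x_n)_{n=1}^N$ with $\sup_{x^*\in B_{E^*}}\bigl(\sum_{n}|\langle x_n,x^*\rangle|^2\bigr)^{\demi}\le 1$. The strategy is to realise the Gaussian average as a limit of Rademacher averages by replication, which is one concrete form of the ``comparaison en moyenne'' of \cite[Lemme~12.14]{DJT}. I introduce an independent array $(r_{nk})_{1\le n\le N,\,1\le k\le m}$ of Rademacher variables and apply (\ref{eq R}) to the enlarged family of the $Nm$ vectors $m^{-1/2}x_n$. Its weak $\ell_2$-norm is unchanged, since $\sum_{n=1}^N\sum_{k=1}^m m^{-1}|\langle x_n,x^*\rangle|^2=\sum_{n=1}^N|\langle x_n,x^*\rangle|^2$ for every $x^*$, so that after regrouping the terms one obtains, uniformly in $m$,
\[
   \Bigl(\EE\Bignorm{\sum_{n=1}^N\Bigl(m^{-1/2}\sum_{k=1}^m r_{nk}\Bigr)ux_n}^2\Bigr)^{\demi}\le\norm{u}_{\err^\infty(E;F)}.
\]

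It then remains to let $m\to+\infty$. By the central limit theorem the random vector $\bigl(m^{-1/2}\sum_{k=1}^m r_{nk}\bigr)_{n=1}^N$ converges in law to $(\gamma_n)_{n=1}^N$, and the map $(t_1,\dots,t_N)\mapsto\norm{\sum_n t_n ux_n}^2$ is continuous; provided the limit may be carried inside the expectation, the left-hand side converges to $\bigl(\EE\bignorm{\sum_n\gamma_n ux_n}^2\bigr)^{\demi}$, and taking the supremum over all admissible $(x_n)$ yields $\norm{u}_{\gamma^\infty(E;F)}\le\norm{u}_{\err^\infty(E;F)}$. I expect the interchange of limit and expectation to be the main obstacle, since convergence in law does not transmit to second moments on its own. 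The remedy is uniform integrability: the quantities under study are Rademacher sums, so Kahane's inequality (Théorème~\ref{thm:kahane}) controls their $\ELL_4$-norms by a fixed multiple of their $\ELL_2$-norms, hence by a constant times $\norm{u}_{\err^\infty(E;F)}$; their squares are therefore bounded in $\ELL_2$, thus uniformly integrable, which legitimates the passage to the limit.
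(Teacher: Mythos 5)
Your proof is correct, and for the essential inequality $\norm{u}_{\gamma^\infty(E;F)}\le\norm{u}_{\err^\infty(E;F)}$ it takes a genuinely different route from the paper's. The paper argues non-asymptotically: writing $x_k=ve_k$ with $v\in\BOUNDED(\ell_2;E)$ and $\norm{v}=\norm{(x_k)}_{\ell_2^\FAIBLE(E)}$, it invokes the ``comparaison en moyenne'' of \cite[Lemme~12.14]{DJT}, which represents the Gaussian second moment as an integral over the orthogonal group $\mathcal{O}_n$ (with its Haar measure) of Rademacher second moments of the rotated vectors $v w e_k$; the $\err$-summing bound is then applied pointwise in the rotation $w$, and the conclusion follows because rotations do not increase the weak $\ell_2$ norm, since $\norm{vw}\le\norm{v}$. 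You replace this rotation-invariance device by replication plus the central limit theorem: repeating each vector $m$ times with weight $m^{-1/2}$ also preserves the weak $\ell_2$ norm, and the Rademacher bound passes to the Gaussian one as $m\to\infty$. You correctly single out the one delicate step---interchanging limit and expectation, which convergence in law alone does not justify---and fix it by uniform integrability via Kahane's $L_4$--$L_2$ inequality (Théorème~\ref{thm:kahane}), whose constant is uniform in $m$ since the quantities remain Rademacher sums. Both routes give the constant $1$. What each buys: the paper's argument is a finite identity but rests on the external Lemma 12.14 of \cite{DJT}, whereas yours is self-contained given tools already stated in the paper (it is in fact the same CLT device the paper mentions for deducing the Gaussian Kahane inequalities from the Rademacher ones), at the price of an asymptotic argument. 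Your treatment of the easy inequality with constant $m_1$ via the contraction-principle comparison coincides with the paper's, which dismisses that direction as immediate.
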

\begin{proof} \def\calO{\mathcal O}
  Seule la deuxième estimation est à montrer. On suppose donc que $u
  \in {\mathcal L}(E; F)$ est $\err$-sommant et se donne une suite
  $(x_k) \in \ell_2^\FAIBLE(E)$. Définissons $v \in \BOUNDED(\ell_2;
  E)$ par $v(e_k) = x_k$: on a alors $\norm{v} = \norm{ (x_k)
  }_{\ell_2^\FAIBLE(E)}$. En notant $\calO_n$ le groupe orthogonal de
  $\ell_2^n$, et $\sigma_n$ la mesure de Haar normalisé sur $\calO_n$
  on a par le lemme 12.14 de \cite{DJT} 
  \begin{align*}
\EE \Bignorm{ \sum_{k=1}^n \gamma_k u k_k }^2 
= &\;  \EE \Bignorm{ \sum_{k=1}^n \gamma_k u\circ v \circ w e_k }^2\\
= &\;  \int_{\calO_n} \EE \Bignorm{ \sum_{k=1}^n \gamma_k u\circ v \circ w e_k }^2\,d\sigma_n(w)\\
\le&\; \norm{u}_{\err^\infty(E; F)} \int_{\calO_n} \sup_{\norm{x^*}\le 1} \sum_{k=1}^n \bigl \langle v \circ w e_k, x^* \rangle\bigr|^2\,d\sigma_n(w)\\
\le&\; \norm{u}_{\err^\infty(E; F)} \sup_{w\in \calO_n} \sup_{\norm{x^*}\le 1} \sum_{k=1}^n \bigl \langle e_k, w^* v^* x^* \rangle\bigr|^2\,d\sigma_n(w) \\
 = & \; \norm{u}_{\err^\infty(E; F)} \norm{ v }^2 = \norm{u}_{\err^\infty(E; F)} \norm{  (x_k)   }_{\ell_2^\FAIBLE(E)}^2.\qedhere
  \end{align*}
\end{proof}

\bigskip

\subsection{Opérateurs $\gamma$-radonifiants et opérateurs presque sommants}
On déduit du théorème d'Itô-Nisio, voir \cite[Théorème~2.17]{jvn} que l'on a les résultats suivants.

\begin{theorem} \label{thm:ito-nisio} Soit $(x_n)_{n\ge 1}$ une suite
  $\gamma$-sommable (respectivement Rademacher-sommable) d'éléments
  d'un espace de Banach $E$. Les conditions suivantes sont
  équivalentes.
   \begin{aufzi}
 \item  La série  $\sum_n\gamma_nx_n$ (respectivement $\sum_n r_nx_n$) converge presque sûrement  dans $E$.
 \item  La série $\sum_n \gamma_nx_n$ (respectivement $\sum_n r_n x_n$) converge en probabilité dans $E$.
 \item Il existe $p\ge 1$ tel que la série $\sum_n \gamma_nx_n$
   respectivement (respectivement $\sum_n r_n x_n$) converge dans
   $L^p(\Omega, E)$.
 \item La série $\sum_n \gamma_nx_n$ (respectivement $\sum_n r_n x_n$)
   converge dans $L^p(\Omega, E)$ pour tout $p\ge 1$.
 \end{aufzi}
 De plus l'ensemble $\gamma(E)$ (respectivement $\err(E)$) des suites
 $(x_n)_{n\ge 1}$ vérifiant ces conditions est égal à l'adhérence de
 l'ensemble $c_{00}(E)$ des suites d'éléments de $E$ nulles à partir
 d'un certain rang dans $\gamma^\infty(E)$ respectivement
 $\err^\infty(E)$, et on a, pour $(x_n) _{n\ge 1}$,
\[
    \left \Vert (x_n)_{n\ge 1}\right \Vert_{\gamma^\infty(E)} =\left (\EE \left \Vert \sum_{n=1}^{+\infty}\gamma_nx_n\right \Vert^2\right)^{\demi},
\]
respectivement
 \[
    \left \Vert (x_n)_{n\ge 1}\right \Vert_{\err^\infty(E)} =\left (\EE \left \Vert \sum_{n=1}^{+\infty}r_nx_n\right \Vert^2\right)^{\demi}.
 \]
\end{theorem}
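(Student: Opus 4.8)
Le plan est de déduire les quatre équivalences du théorème d'Itô--Nisio \cite[Théorème~2.17]{jvn}, puis d'établir l'identité de norme et l'identification de $\gamma(E)$ par des arguments de monotonie et de complétude. Je commencerais par observer que les variables aléatoires $\gamma_nx_n$ (respectivement $r_nx_n$) sont indépendantes et symétriques, de sorte que le théorème d'Itô--Nisio s'applique aux sommes partielles $S_N=\sum_{n=1}^N\gamma_nx_n$ et fournit l'équivalence entre la convergence presque sûre (a) et la convergence en probabilité (b). L'inégalité de Kahane (Théorème~\ref{thm:kahane}) montre par ailleurs que, pour $1\le p,q<\infty$, on a $\bignorm{S_M-S_N}_{\ELL_q}\le K_{p,q}\bignorm{S_M-S_N}_{\ELL_p}$ : toutes les normes $\ELL_p(\Omega;E)$ des sommes gaussiennes (respectivement de Rademacher) sont donc équivalentes, une suite de Cauchy dans $\ELL_p(\Omega;E)$ l'est dans $\ELL_q(\Omega;E)$, et la complétude de ces espaces donne l'équivalence entre (c) et (d).

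Il reste à relier ces deux groupes de conditions. Les implications (d)$\Rightarrow$(c) et (c)$\Rightarrow$(b) sont immédiates, la convergence dans $\ELL_p$ entraînant la convergence en probabilité. Pour fermer le cycle par (a)$\Rightarrow$(d), je partirais de la limite presque sûre $S$ fournie par (a) ; la difficulté principale sera de relever cette convergence presque sûre en convergence $\ELL_p$. C'est là qu'intervient l'intégrabilité forte des sommes gaussiennes et de Rademacher : l'inégalité de Kahane majore uniformément en $N$ les moments d'ordre élevé des $S_N$ par leurs moments d'ordre deux, lesquels sont bornés par monotonie ; on en déduit l'uniforme intégrabilité de la famille $\{\norm{S_N-S}^p\}_N$, donc la convergence dans $\ELL_p(\Omega;E)$ pour tout $p$. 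Cet ingrédient d'intégrabilité, qui distingue le cas gaussien et de Rademacher du cas d'une somme quelconque de variables indépendantes symétriques, est précisément ce que recouvre l'énoncé d'Itô--Nisio cité ; le cycle (b)$\Rightarrow$(a)$\Rightarrow$(d)$\Rightarrow$(c)$\Rightarrow$(b) est alors complet.

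Pour l'identité de norme, je me placerais sous l'hypothèse $(x_n)\in\gamma(E)$, c'est-à-dire $\sum\gamma_nx_n$ converge dans $\ELL_2(\Omega;E)$ vers $S$. La croissance des moments d'ordre deux des sommes partielles, rappelée après la Définition~\ref{def:RAD}, donne $\norm{(x_n)}_{\gamma^\infty(E)}^2=\sup_N\EE\norm{S_N}^2=\lim_N\EE\norm{S_N}^2$ ; comme la convergence dans $\ELL_2$ entraîne $\EE\norm{S_N}^2\to\EE\norm{S}^2$, on obtient $\norm{(x_n)}_{\gamma^\infty(E)}=\bigl(\EE\norm{\sum_{n\ge1}\gamma_nx_n}^2\bigr)^\demi$, et de même pour $\err^\infty(E)$ en remplaçant $\gamma_n$ par $r_n$.

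Enfin, pour l'identification de $\gamma(E)$ à l'adhérence de $c_{00}(E)$ dans $\gamma^\infty(E)$, je traiterais les deux inclusions séparément. Si $(x_n)\in\gamma(E)$, ses troncatures $(x_1,\dots,x_N,0,\dots)$ appartiennent à $c_{00}(E)$ et, par monotonie, la norme $\gamma^\infty$ du reste vaut $\bigl(\EE\norm{S-S_N}^2\bigr)^\demi$, qui tend vers $0$ par convergence dans $\ELL_2$ ; d'où $\gamma(E)\subseteq\overline{c_{00}(E)}$. Réciproquement, $c_{00}(E)\subset\gamma(E)$ puisqu'une somme finie converge trivialement dans $\ELL_2$, et $\gamma(E)$ est un sous-espace fermé de $\gamma^\infty(E)$ (voir section~2) ; il contient donc l'adhérence de $c_{00}(E)$, ce qui achève l'égalité. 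Le cas Rademacher est identique, et je m'attends à ce que le relèvement en norme $\ELL_p$ du deuxième paragraphe reste le seul point réellement substantiel, tout le reste se ramenant à la monotonie des moments et à la complétude des espaces de suites considérés.
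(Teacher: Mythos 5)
Your proposal is correct and takes essentially the same route as the paper: the paper gives no proof at all, stating the theorem as a direct consequence of the Itô--Nisio theorem cited from van Neerven's survey (\cite[Théorème~2.17]{jvn}), which is precisely the result you build on. Your additional details — Kahane's inequality to pass between $\ELL^p$ norms, uniform integrability for the passage from convergence presque sûre to convergence $\ELL^p$, monotonicity of the second moments for the norm identity, and the closedness of $\gamma(E)$ dans $\gamma^\infty(E)$ (asserted in section~2 of the paper) for the identification with l'adhérence de $c_{00}(E)$ — correctly fill in what the paper delegates to that reference.
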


\noindent Ceci suggère les notions suivantes.

\begin{definition} On dit qu'un opérateur $u: E\to F$ est
  $\gamma$-radonifiant (respectivement presque sommant)) si $(ux_n)_{n\ge 1}\in
  \gamma(F)$ (respectivement  $(ux_n)_{n\ge 1}\in \err(F)$) pour toute suite
  $(x_n)_{n\ge 1} \in \ell_2^\FAIBLE(E)$. L'ensemble des opérateurs
  $\gamma$-radonifiants (respectivement presque sommants) $u:E\to F$ est noté
  $\gamma(E; F)$ (respectivement $\Pi_{ps}(E; F))$.  Pour $u \in \gamma(E; F)$
  (respectivement $\Pi_{ps}(E; F)$) on pose
\[
    \Vert u \Vert_{\gamma(E; F)} =\Vert u \Vert_{\gamma^{\infty}(E; F)} \quad \mbox{respectivement} \quad  \pi_{ps} (u) =\Vert u \Vert_{\err^\infty(E; F)}.
\]  
\end{definition}

\noindent Notons que si $u \in \BOUNDED(E; F)$ est $\gamma$-radonifiant, alors
le Théorème~\ref{thm:ito-nisio} implique que $\Vert u
\Vert_{\gamma(E; F)}$ est la plus petite constante $c \ge 0$ telle que
\[
\EE \Bignorm{ \sum_n \gamma_n u \, x_n }^2 \le \; c^2 \, \bignorm{   (x_n) }^2_{\ell_2^\FAIBLE(E)}
\]
pour toute suite $(x_n)_{n\ge 1} \in {\ell_2^\FAIBLE(E)}$, et de même
pour des opérateurs presque sommants.


\noindent On vérifie que $\gamma(E; F)$ contient l'adhérence dans
$\gamma^{\infty}(E; F)$ de l'ensemble des opérateurs de rang fini, et
ces deux ensembles sont égaux si $H$ est un espace de Hilbert
séparable. De même $\Pi_{ps}(E; F)$ contient l'adhérence dans
$\err^\infty(E; F)$ de l'ensemble des opérateurs de rang fini.

\smallskip

Le résultat suivant est une reformulation d'un résultat de
Hoffmann-J\o{}rgensen et Kwapien, \cite{HK,Kwap}. Nous complétons la
version de ce théorème donné par van~Neerven dans
\cite[Théorème~4.2]{jvn} en incorporant à l'énoncé une conséquence
d'un exemple donné par Linde et Pietsch dans \cite{Pietsch} d'un
exemple d'opérateur $\gamma$-sommant $T\in \BOUNDED(\ell_2; c_0)$ qui
n'est pas $\gamma$-radonifiant, voir aussi \cite[Exemple 4.4]{jvn}.

\begin{theorem}
Soit $E$ un espace de Banach. Les conditions suivantes sont équivalentes.
\begin{aufzi}
  \item $\gamma(E)=\gamma^\infty(E)$.
  \item $\err(E)=\err^\infty(E)$.
  \item L'espace $E$ ne contient aucun sous-espace fermé isomorphe
    à $c_0$.
  \end{aufzi}
\end{theorem}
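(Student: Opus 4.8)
The plan is to prove the chain of equivalences by first establishing (a)$\Leftrightarrow$(b) by soft functional-analytic arguments, and then reducing the equivalence with (c) to the Hoffmann--J\o{}rgensen--Kwapień theorem on the one hand and a single explicit counterexample built from the canonical basis of $c_0$ on the other. The genuinely hard input will be the implication (c)$\Rightarrow$(b); everything else is comparison of norms and an extraction argument.

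First I would dispose of (a)$\Leftrightarrow$(b). Using the identifications $\gamma^\infty(E)=\gamma^\infty(\ell_2;E)$ and $\err^\infty(E)=\err^\infty(\ell_2;E)$ recalled above, Theorem~\ref{thm:gamma-somm-egal-err-somm}, applied to operators from $\ell_2$ into $E$, shows that $\gamma^\infty(E)$ and $\err^\infty(E)$ have the same underlying set and that the identity map between them is a topological isomorphism, since $m_1\norm{\cdot}_{\err^\infty}\le\norm{\cdot}_{\gamma^\infty}\le\norm{\cdot}_{\err^\infty}$. By the Itô--Nisio theorem (Theorem~\ref{thm:ito-nisio}), $\gamma(E)$ is the closure of $c_{00}(E)$ in $\gamma^\infty(E)$ and $\err(E)$ is the closure of $c_{00}(E)$ in $\err^\infty(E)$; as the two norms are equivalent they define the same topology and hence the same closure, so $\gamma(E)=\err(E)$ as sets. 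Writing $B:=\gamma^\infty(E)=\err^\infty(E)$ and $C:=\gamma(E)=\err(E)$, statements (a) and (b) both read ``$C=B$'' and are therefore literally the same condition.

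Next, for (not c)$\Rightarrow$(not b), I would suppose that $E$ contains a closed subspace isomorphic to $c_0$ and fix an isomorphism $T:c_0\to E$ onto its range, with $(e_n)$ the canonical basis of $c_0$; put $x_n:=Te_n$. The computation already recorded after Definition~\ref{def:RAD}, namely $\EE\bignorm{\sum_{n=N}^M r_n e_n}_\infty^2=1$, shows that $(e_n)$ has uniformly bounded Rademacher partial sums but is not Cauchy, so $(e_n)\in\err^\infty(c_0)\setminus\err(c_0)$. The ideal property of $\err^\infty$ then gives $(x_n)\in\err^\infty(E)$, while convergence of $\sum r_nx_n$ in $L^2(\Omega;E)$ would, after applying the bounded inverse $T^{-1}$ on the closed range $T(c_0)$, force convergence of $\sum r_ne_n$ in $L^2(\Omega;c_0)$, a contradiction; hence $(x_n)\notin\err(E)$, i.e. $\err(E)\neq\err^\infty(E)$. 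I note that the naive analogue fails for Gaussians, since $\EE\max_{n\le N}|\gamma_n|^2\to\infty$ forces $(e_n)\notin\gamma^\infty(c_0)$; this is exactly why an \emph{explicit} Gaussian witness needs the Linde--Pietsch example, but the equivalence (a)$\Leftrightarrow$(b) proved above lets us bypass it and deduce (not c)$\Rightarrow$(not a) for free.

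The remaining and genuinely difficult implication is (c)$\Rightarrow$(b): if $E$ contains no copy of $c_0$, then uniform boundedness of the $L^2$-partial sums of $\sum r_nx_n$ forces convergence. This is precisely the Hoffmann--J\o{}rgensen--Kwapień theorem (\cite{HK,Kwap}, in the form of \cite[Théorème~4.2]{jvn}), which I would invoke directly. Should one wish to reprove it, the route is the contrapositive: given $(x_n)\in\err^\infty(E)\setminus\err(E)$, failure of the Cauchy property yields, via Lévy's inequalities, pairwise disjoint blocks $u_k=\sum_{n\in I_k}r_nx_n$ with $\EE\norm{u_k}^2\ge\delta>0$ whose signed sums remain uniformly bounded in $L^2$; a Bessaga--Pełczyński type argument then extracts from these blocks a sequence equivalent to the $c_0$-basis, contradicting the hypothesis on $E$. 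This extraction of a $c_0$-copy from random partial sums that are bounded but non-convergent is the main obstacle, and it is the sole place where the structural hypothesis on $E$ is used.
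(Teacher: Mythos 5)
Your proposal has a genuine gap, located exactly at the point you try to finesse: the two halves of your argument use two inequivalent meanings of $\err^\infty(E)$, and no single meaning makes both halves work. Step 1 (the equivalence (a)$\Leftrightarrow$(b)) needs $\err^\infty(E)$ to be the operator space $\err^\infty(\ell_2;E)$, since Theorem~\ref{thm:gamma-somm-egal-err-somm} and its norm inequalities are statements about $\err$-sommant \emph{operators}; Step 2 (the $c_0$ witness) needs $\err^\infty(E)$ to be the space $\RADbdd(E)$ of sequences with uniformly bounded Rademacher partial sums, since bounded, non-Cauchy partial sums is all that the computation $\EE\bignorm{\sum_{n=N}^M r_ne_n}^2=1$ gives you. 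These two spaces coincide for Gaussian variables (rotational invariance of Gaussian vectors is precisely what justifies the paper's ``abus de notation''), but \emph{not} for Rademacher variables, and they differ exactly on spaces containing $c_0$: the inclusion $u:\ell_2\to c_0$, $ue_n=e_n$, is not $\err$-sommant, for if it were, Theorem~\ref{thm:gamma-somm-egal-err-somm} --- the very result you invoke in Step 1 --- would make it $\gamma$-sommant, contradicting $\EE\bignorm{\sum_{n=1}^N\gamma_ne_n}_\infty^2=\EE\max_{n\le N}|\gamma_n|^2\sim 2\log N\to\infty$, which you yourself point out. So under the reading that validates Step 1, your witness $(e_n)$ does not belong to $\err^\infty(c_0)$ and (not~c)$\Rightarrow$(not~b) is unproved; under the reading that validates Step 2, the sets $\GAUSSbdd(c_0)$ and $\RADbdd(c_0)$ are not even equal, so the norm equivalence underlying Step 1 is false and (a)$\Leftrightarrow$(b) is unproved. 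Your own Gaussian remark refutes your own Rademacher witness: the proposal is internally inconsistent.

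The consequence is that the Linde--Pietsch example cannot be bypassed --- and the paper does not bypass it: its proof of this theorem is by citation, combining the Hoffmann--J\o{}rgensen--Kwapie\'n theorem (\cite[Théorème 4.2]{jvn}) for (c)$\Rightarrow$(a)/(b) with the Linde--Pietsch example of a $\gamma$-sommant operator $T\in\BOUNDED(\ell_2;c_0)$ that is not $\gamma$-radonifiant (\cite{Pietsch}, \cite[Exemple 4.4]{jvn}) for the converse. The correct repair of your Step 2 is to replace the canonical basis by the weighted sequence $x_n=(\log(n+1))^{-\demi}e_n$ in $c_0$: one has $\EE\sup_n |\gamma_n|^2/\log(n+1)<+\infty$, so the associated diagonal operator is $\gamma$-sommant (hence $\err$-sommant), while the series $\sum_n\gamma_nx_n$ cannot converge in $c_0$ because $\limsup_n|\gamma_n|/\sqrt{2\log n}=1$ presque sûrement; composing with the embedding $T:c_0\to E$ and using that $T$ is an isomorphism onto its closed range gives (not~c)$\Rightarrow$(not~a), and your Step 1 (which is fine in the operator interpretation) then transfers this to (not~b). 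Your appeal to Hoffmann--J\o{}rgensen--Kwapie\'n for (c)$\Rightarrow$(b) matches what the paper does.
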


\noindent En combinant ce résultat avec le
théorème~\ref{thm:gamma-somm-egal-err-somm} on déduit immédiatement

\begin{corollary} Soit $F$ un espace de Banach. Si $F$ ne contient
  aucun sous-espace fermé isomorphe à $c_0$, alors
  $\gamma^{\infty}(E; F)=\gamma(E; F)=\err^\infty(E; F)=\Pi_{ps}(E; F)$, et on a,
  pour tout opérateur $\gamma$-radonifiant $u:E\to F$
\[
    m_1\pi_{ps}(u)\le \Vert u \Vert_{\gamma(E; F)}\le \pi_{ps}(u).
\]
\end{corollary}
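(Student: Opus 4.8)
The plan is to obtain the corollary formally, feeding the hypothesis on $F$ into the dichotomy theorem stated just above (applied to $F$ rather than to $E$) and then invoking Theorem~\ref{thm:gamma-somm-egal-err-somm}; no new estimate is needed. The one point to keep in mind throughout is that all four operator classes are defined by testing the image sequence $(ux_n)_{n\ge 1}$ against a sequence space built from the \emph{target} space $F$, so it is precisely the absence of a copy of $c_0$ inside $F$ that does the work.

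First I would record the definitional translation: an operator $u \in \mathcal{L}(E;F)$ lies in $\gamma^\infty(E;F)$, $\gamma(E;F)$, $\err^\infty(E;F)$, $\Pi_{ps}(E;F)$ precisely when $(ux_n)_{n\ge 1}$ belongs to $\gamma^\infty(F)$, $\gamma(F)$, $\err^\infty(F)$, $\err(F)$ respectively, for every $(x_n)_{n\ge 1}\in \ell_2^\FAIBLE(E)$. This is immediate from the definitions of the $\gamma$-summing, $\gamma$-radonifying, $\err$-summing and almost summing operators.

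Next, since $F$ contains no closed subspace isomorphic to $c_0$, the preceding dichotomy theorem applied to $F$ yields the two sequence-space identities $\gamma(F)=\gamma^\infty(F)$ and $\err(F)=\err^\infty(F)$; moreover, by Theorem~\ref{thm:ito-nisio} these are equalities of \emph{normed} spaces, the norm of a convergent sequence in $\gamma(F)$ being exactly its $\gamma^\infty(F)$-norm, and similarly on the Rademacher side. Because membership in each operator class is tested pointwise on image sequences, these identities transfer directly: $(ux_n)\in\gamma^\infty(F)\Leftrightarrow(ux_n)\in\gamma(F)$ for all weakly $\ell_2$ sequences, whence $\gamma^\infty(E;F)=\gamma(E;F)$, and likewise $\err^\infty(E;F)=\Pi_{ps}(E;F)$. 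The operator norms coincide as well, since by definition $\Vert u \Vert_{\gamma(E;F)}=\Vert u \Vert_{\gamma^\infty(E;F)}$ and $\pi_{ps}(u)=\Vert u \Vert_{\err^\infty(E;F)}$.

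Finally I would close the loop with Theorem~\ref{thm:gamma-somm-egal-err-somm}, which identifies $\gamma^\infty(E;F)$ with $\err^\infty(E;F)$ and supplies the comparison $m_1\Vert u \Vert_{\err^\infty(E;F)}\le\Vert u \Vert_{\gamma^\infty(E;F)}\le\Vert u \Vert_{\err^\infty(E;F)}$. Chaining the four equalities gives $\gamma^\infty(E;F)=\gamma(E;F)=\err^\infty(E;F)=\Pi_{ps}(E;F)$, and substituting the two norm identities from the previous step turns the displayed comparison into $m_1\pi_{ps}(u)\le\Vert u \Vert_{\gamma(E;F)}\le\pi_{ps}(u)$, as required. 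There is no genuine obstacle here; the only things to watch are that the $c_0$-hypothesis is applied to the target space $F$ in the single-space dichotomy, and that the sequence-space norm identifications are exact rather than merely up to a constant, so that the factor $m_1$ in the final inequality originates solely from Theorem~\ref{thm:gamma-somm-egal-err-somm}.
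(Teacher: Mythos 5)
Your proposal is correct and takes essentially the same approach as the paper: the paper deduces this corollary immediately by combining the Hoffmann-J\o{}rgensen--Kwapie\'n dichotomy theorem stated just above it (applied to the target space $F$) with Théorème~\ref{thm:gamma-somm-egal-err-somm}, exactly as you do. The only difference is that you spell out the definitional transfer from the sequence-space identities $\gamma(F)=\gamma^\infty(F)$ and $\err(F)=\err^\infty(F)$ to the operator classes and their norms, which the paper leaves implicit.
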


\noindent Dans le cas général on a $\gamma(E; F)\subset \Pi_{ps}(E; F)$,
et $m_1\pi_{ps}(u)\le \Vert u \Vert_{\gamma(E; F)}$ pour $u \in
\gamma(E; F)$.  Notons que dans le chapitre 12 de \cite{DJT} les
auteurs ont malheureusement confondu la classe des opérateurs presque
sommants avec la classe des opérateurs Rademacher bornés, comme l'ont
remarqué avant nous Blasco, Tarieladze et Vidal dans \cite{BTV}. Nous
reviendrons sur cette question dans \cite{AEH}. 
On a le résultat
classique suivant

\begin{proposition} 
  Soient $H_1$ et $H_2$ deux espaces de Hilbert. Alors
  $S_2(H_1,H_1)=\gamma^{\infty}(H_1,H_2)=\gamma(H_1,H_2)$. De plus
  $\pi_2(u)=\Vert u\Vert_{\gamma(H_1,H_2)}$ pour tout opérateur de
  Hilbert-Schmidt $u$.
\end{proposition}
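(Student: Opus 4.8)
On établit les deux égalités séparément : d'abord $\gamma^\infty(H_1;H_2)=\gamma(H_1;H_2)$, qui ne concerne que l'espace d'arrivée $H_2$, puis $\gamma^\infty(H_1;H_2)=S_2(H_1;H_2)$ ainsi que l'identité des normes, qui exploitent le caractère hilbertien des \emph{deux} espaces.

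Pour la première égalité, on applique le corollaire précédant l'énoncé (la reformulation du théorème de Hoffmann-J\o{}rgensen et Kwapie\'n) : il suffit de vérifier que $H_2$ ne contient aucun sous-espace fermé isomorphe à $c_0$. C'est immédiat, car $H_2$ est réflexif, tout sous-espace fermé d'un espace réflexif est réflexif, et $c_0$ n'est pas réflexif. Le corollaire fournit alors $\gamma^\infty(H_1;H_2)=\gamma(H_1;H_2)$, l'égalité des normes étant automatique puisque $\gamma(E;F)$ est par définition muni de la norme induite par $\gamma^\infty(E;F)$.

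Pour la seconde égalité, on utilise que le domaine $H_1$ est hilbertien afin d'appliquer la reformulation de la Remarque ci-dessus, qui donne $\|u\|_{\gamma^\infty(H_1;H_2)}^2=\sup_\Lambda \EE\bignorm{\sum_n \gamma_n u h_n}^2$, le supremum portant sur les systèmes orthonormaux finis $(h_n)_n$ de $H_1$. Le calcul essentiel est alors le suivant : $H_2$ étant hilbertien et les $\gamma_n$ indépendantes, centrées et réduites, l'orthogonalité dans $L^2(\Omega;H_2)$ réduit cette quantité à $\EE\bignorm{\sum_n \gamma_n u h_n}^2=\sum_{n,m}\EE[\gamma_n\gamma_m]\langle uh_n,uh_m\rangle=\sum_n\|uh_n\|^2$. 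En prenant le supremum sur $\Lambda$, on retrouve $\sum_{i\in I}\|ue_i\|^2$ pour une base hilbertienne $(e_i)_{i\in I}$ de $H_1$, c'est-à-dire le carré de la norme de Hilbert-Schmidt de $u$, fini exactement lorsque $u\in S_2(H_1;H_2)$ ; d'après l'identité finale du Théorème~\ref{thm:HS}, cette quantité vaut $\pi_2(u)^2$. Ceci donne simultanément $\gamma^\infty(H_1;H_2)=S_2(H_1;H_2)$ et $\|u\|_{\gamma^\infty(H_1;H_2)}=\pi_2(u)$.

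Il n'y a pas d'obstacle sérieux : tous les ingrédients sont déjà disponibles. Le seul point demandant un peu de soin est la réduction, dans la caractérisation de la Remarque, aux systèmes orthonormaux finis, de sorte que l'argument d'orthogonalité gaussienne s'applique uniformément et que le supremum reproduise effectivement la somme de Hilbert-Schmidt complète (éventuellement non séparable) ; ce point acquis, le résultat se ramène à un calcul de moment d'ordre deux combiné aux deux résultats cités.
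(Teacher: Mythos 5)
Your proof is correct, and its central computation --- the orthonormality of the Gaussian sequence combined with $\norm{x}^2=\langle x,x\rangle$ in the Hilbert target --- is precisely the idea the paper's one-line proof invokes; the difference is in the scaffolding you build around it. The paper's hint, applied to an \emph{arbitrary} sequence $(x_n)\in\ell_2^\FAIBLE(H_1)$ rather than to orthonormal systems, gives for any finite block
\[
   \EE \Bignorm{ \sum_{n=M}^{N} \gamma_n u x_n }^2 \;=\; \sum_{n=M}^{N} \norm{ u x_n }^2 ,
\]
so that boundedness of the partial sums, their convergence in $\ELL_2(\Omega;H_2)$ (Cauchy criterion), and the finiteness of $\sum_n \norm{ux_n}^2$ are all equivalent; this yields $\gamma^{\infty}(H_1;H_2)=\gamma(H_1;H_2)=\Pi_2(H_1;H_2)=S_2(H_1;H_2)$ together with $\Vert u\Vert_{\gamma^{\infty}}=\pi_2(u)$ in one stroke, using only the equivalence \ref{item:HS5} of the Théorème~\ref{thm:HS}. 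You instead split the statement in two: you obtain $\gamma^{\infty}=\gamma$ from the Hoffmann-J\o{}rgensen--Kwapie\'n corollary (reflexivity of $H_2$ excludes copies of $c_0$), and $\gamma^{\infty}=S_2$ from the Remark's orthonormal-system characterization plus the same orthogonality computation. Both routes are valid, but yours is heavier: Hoffmann-J\o{}rgensen--Kwapie\'n is a deep theorem where a two-line Cauchy argument suffices, and the point you flag at the end --- that the supremum over \emph{finite} orthonormal systems reproduces the full Hilbert-Schmidt sum --- genuinely requires an argument (for instance, extend a finite orthonormal system to an orthonormal basis and use the basis-independence asserted in the Théorème~\ref{thm:HS}, or a Bessel-type estimate), which you acknowledge but leave out. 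What your decomposition buys in exchange is a clearer accounting of which hypothesis is used where: only the Hilbertian nature of $H_2$ for $\gamma=\gamma^{\infty}$, and that of both spaces for the identification with $S_2$ --- a point the paper's telegraphic proof leaves entirely implicit.
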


\begin{proof} On utilise l'orthonormalité des suites gaussiennes et le fait que $\norm{x}^2_H = \langle x, x \rangle$.
\end{proof}

\noindent Finalement, on rappelle un résultat de Linde et Pietsch
\cite{lp,Pietsch} sur la comparaison entre opérateurs $p$-sommants et
$\gamma$-sommants. La preuve repose sur le théorème de factorisation
de Pietsch et les inégalités de Khintchine-Kahane, voir par exemple
\cite[Proposition 12.1]{jvn}.

\begin{proposition} \label{prop:p-sommant-implique-radonifiant}
Soit $p\ge 1$. Alors tout opérateur $p$-sommant
 est $\gamma$-radonifiant, et on a
\[
    \Vert u \Vert_{\gamma(E; F)}\le  c_p\, \pi_p(u) \ \ (u \in \Pi_{p}(E; F)),
\]
avec $c_p=K^{\gamma}_{2,p}K^{\gamma}_{p,2}$ où $K^{\gamma}_{2,p}$ et
$K^{\gamma}_{p,2}$ sont les constantes intervenant dans les
inégalités de Kahane-Khintchine gaussiennes.
\end{proposition}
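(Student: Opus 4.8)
The plan is to derive the statement from the Pietsch factorization (Theorem~\ref{thm:pietsch}) combined with two applications of the Gaussian Khintchine--Kahane inequalities (Theorem~\ref{thm:kahane}), the only genuinely delicate point being the passage from $\gamma$-\emph{sommant} to $\gamma$-\emph{radonifiant}, for which I reserve a dominated-convergence argument against the Pietsch measure.

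First I would fix the weak$*$-compact norming set $K=B_{E^*}$ and read off from Theorem~\ref{thm:pietsch} the associated Pietsch domination inequality: writing $u=\hat u\circ j_p\circ\iota_E$ with the optimal factorization $\norm{\hat u}=\pi_p(u)$, one obtains a probability measure $\mu$ on $K$ such that
\[
   \norm{ux} \le \pi_p(u)\Bigl(\int_K |\langle x,x^*\rangle|^p \, d\mu(x^*)\Bigr)^{\nicefrac{1}{p}} \qquad (x\in E).
\]
Given $(x_n)_{n\ge 1}\in\ell_2^\FAIBLE(E)$ and integers $M<N$, I would apply this pointwise in $\omega$ to the vector $x=\sum_{n=M+1}^N \gamma_n(\omega)\,x_n$, raise to the power $p$, integrate in $\omega$ and invoke Fubini to get
\[
   \EE\Bignorm{\sum_{n=M+1}^N \gamma_n u x_n}^p \le \pi_p(u)^p \int_K \EE\Bigl|\sum_{n=M+1}^N \gamma_n \langle x_n,x^*\rangle\Bigr|^p \, d\mu(x^*).
\]
For each fixed $x^*$ the inner expectation is the $p$-th moment of a scalar Gaussian, so the Gaussian Khintchine--Kahane inequality bounds it by $(K^\gamma_{2,p})^p\bigl(\sum_{n=M+1}^N |\langle x_n,x^*\rangle|^2\bigr)^{p/2}$.

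Taking $M=0$ and $N$ arbitrary, and bounding the integrand by its supremum $\norm{(x_n)}_{\ell_2^\FAIBLE(E)}^p$ (legitimate because $\mu(K)=1$), already gives the $\gamma$-summing estimate; a second application of Theorem~\ref{thm:kahane} to pass from the $\ELL_p$- to the $\ELL_2$-norm on the left-hand side contributes the factor $K^\gamma_{p,2}$, so that the two constants multiply to exactly $c_p=K^\gamma_{2,p}K^\gamma_{p,2}$. The step I expect to be the real content --- and what separates radonifiant from merely summing --- is the \emph{convergence} of $\sum_n \gamma_n u x_n$. Here I would not pass to the supremum over $x^*$ (which in general does nothing for the tails, as the unit vector basis of $c_0$ shows), but keep the tail integral: since $x^*\mapsto \sum_{n\ge 1}|\langle x_n,x^*\rangle|^2$ is everywhere bounded on $K$ by $\norm{(x_n)}_{\ell_2^\FAIBLE(E)}^2$ and converges pointwise, the dominated convergence theorem yields $\int_K \bigl(\sum_{n>M} |\langle x_n,x^*\rangle|^2\bigr)^{p/2}\, d\mu(x^*)\to 0$ as $M\to\infty$, uniformly in $N$. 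Plugging this into the displayed estimate makes the partial sums Cauchy in $\ELL_p(\Omega;F)$, hence in $\ELL_2(\Omega;F)$ by Theorem~\ref{thm:ito-nisio}; therefore $(ux_n)\in\gamma(F)$ and $u$ is $\gamma$-radonifiant. The subtlety worth emphasising is precisely that averaging against the Pietsch probability measure $\mu$, rather than taking a supremum over $B_{E^*}$, is what tames the $c_0$-type obstruction that otherwise distinguishes $\gamma^\infty(F)$ from $\gamma(F)$.
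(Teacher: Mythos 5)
Your proof is correct and follows precisely the route the paper itself indicates: the paper gives no written proof of this proposition, saying only that it rests on the Pietsch factorization theorem and the Gaussian Kahane--Khintchine inequalities (citing van Neerven's Proposition 12.1), and your argument --- Pietsch domination against the probability measure $\mu$, two applications of the Gaussian inequality producing exactly $c_p=K^{\gamma}_{2,p}K^{\gamma}_{p,2}$, then dominated convergence on the tail integrals to get Cauchy partial sums and hence convergence via Theorem~\ref{thm:ito-nisio} --- is exactly that proof. In particular your closing observation is the right one: integrating against $\mu$ instead of taking a supremum over $B_{E^*}$ is what yields radonification rather than mere $\gamma$-summability.
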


\bigskip

\section{Type et  cotype des espaces de Banach}
\noindent On rappelle les notions suivantes, qui sont apparues à la suite des
travaux de Hoffmann-J\o{}rgensen, Kwapień, Maurey et Pisier au
début des années 1970, voir par exemple \cite{mau} et les
exposés du Séminaire Maurey-Schwartz de cette époque à l'École
Polytechnique.

\begin{definition}
  \begin{aufzi}
  \item On dit qu'un espace de Banach $E$ est de type $p\in [1,2]$
    s'il existe une constante $c_p>0$ telle que pour toute famille
    finie $x_1,\dots,x_N$ d'éléments de $E$ on ait
\[
    \left (\EE  \left \Vert \sum \limits_{n=1}^Nr_nx_n\Vert ^2\right \Vert \right)^{\demi} \le c_p\left (\sum \limits_{n=1}^N\Vert x_n\Vert^p\right )^{\nicefrac{1}{p}}.
\]
   \item On dit qu'un espace de Banach $E$ est de cotype $q\in
  [2,+\infty)$ s'il existe une constante $c'_q>0$ telle que pour toute
  famille finie $x_1,\dots,x_N$ d'éléments de $E$ on ait
\[
    \left (\sum \limits_{n=1}^N\Vert x_n\Vert^q \right )^{\nicefrac{1}{q}}\le c'_q \left (\EE \left \Vert \sum \limits_{n=1}^N r_nx_n\right \Vert ^2 \right )^{\demi}.
\]
\end{aufzi}
\end{definition}

\noindent Les plus petites constantes dans les définitions ci-dessus
sont notées $T^r_p(E)$ et $C^r_q(E)$. On vérifie que tout espace de
Banach est de type 1 (on appellera ceci le type trivial) et, moyennant
une adaptation évidente de la définition, de cotype infini. On peut
définir de même le type et le cotype en utilisant une suite gaussienne
au lieu d'une suite de Rademacher. On obtient les mêmes notions, voir
\cite[Proposition 12.11 et Lemma 12.1]{DJT}, avec des constantes
gaussiennes $T^{\gamma}_p(E)$ et $C^{\gamma}_q(E)$. On vérifie qu'un
espace de Banach a les mêmes types et cotypes que son bidual, voir
\cite[Proposition 11.9]{DJT}.

\noindent On rappelle que pour $1<p<\infty$ on note $p^*$ le conjugué de $p$,
défini par la formule $1/p^*+1/p=1$. On a le résultat facile
suivant, voir\cite[Proposition 11.10]{DJT}.

\begin{proposition} 
  Si un espace de Banach $E$ est de type $p\in (1,2]$, alors son dual
  $E^*$ est de cotype $p^*, $ et $C_{p^*}^r(E^*)\le T_p^r(E)$.
\end{proposition}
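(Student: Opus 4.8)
The plan is to establish directly the cotype-$p^*$ inequality for $E^*$ with the asserted constant. Note first that the hypothesis $p\in(1,2]$ forces $p^*\in[2,\infty)$, so $p^*$ is an admissible cotype exponent. Fix a finite family $x_1^*,\dots,x_N^* \in E^*$; by the duality between $\ell_{p^*}^N$ and $\ell_p^N$ applied to the nonnegative sequence $(\norm{x_n^*})_n$ one has
\[
\Bigl(\sum_{n=1}^N \norm{x_n^*}^{p^*}\Bigr)^{\nicefrac{1}{p^*}} = \sup\Bigl\{ \sum_{n=1}^N a_n \norm{x_n^*} \SUCHTHAT a_n \ge 0,\ \sum_{n=1}^N a_n^p \le 1\Bigr\},
\]
so it suffices to bound $\sum_n a_n \norm{x_n^*}$ by $T_p^r(E)\,\bigl(\EE\Bignorm{\sum_n r_n x_n^*}^2\bigr)^{\demi}$ for each such $(a_n)$.

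Given $(a_n)$ and $\eps>0$, I would use the Hahn--Banach theorem to pick $x_n \in B_E$ with $\langle x_n, x_n^*\rangle \ge \norm{x_n^*}-\eps$, and set $y_n = a_n x_n$, so that $\sum_n \norm{y_n}^p = \sum_n a_n^p \norm{x_n}^p \le \sum_n a_n^p \le 1$. The key step is the Rademacher orthogonality relation $\EE[r_m r_n]=\delta_{mn}$ (already used in the excerpt), which gives
\[
\sum_{n=1}^N a_n \langle x_n, x_n^*\rangle = \sum_{n=1}^N \langle y_n, x_n^*\rangle = \EE\Bigl\langle \sum_{m} r_m y_m,\ \sum_n r_n x_n^*\Bigr\rangle.
\]
Applying the pointwise duality bound $|\langle u,v^*\rangle| \le \norm u\,\norm{v^*}$ inside the expectation and then the Cauchy--Schwarz inequality in $\ELL_2(\Omega)$, the right-hand side is at most $\bigl(\EE\Bignorm{\sum_m r_m y_m}^2\bigr)^{\demi}\bigl(\EE\Bignorm{\sum_n r_n x_n^*}^2\bigr)^{\demi}$. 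The definition of type $p$ applied to the family $(y_m)$ then bounds the first factor by $T_p^r(E)\bigl(\sum_m \norm{y_m}^p\bigr)^{\nicefrac{1}{p}}\le T_p^r(E)$.

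Combining these estimates and letting $\eps \to 0$ yields $\sum_n a_n \norm{x_n^*} \le T_p^r(E)\bigl(\EE\Bignorm{\sum_n r_n x_n^*}^2\bigr)^{\demi}$; taking the supremum over the admissible $(a_n)$ produces precisely the cotype-$p^*$ inequality with constant $\le T_p^r(E)$. I do not expect a genuine obstacle here, since the argument is a clean trace/duality computation. The only points requiring a little care are the identification of the $\ell_{p^*}$ norm as a supremum of $\ell_p$ pairings (this is exactly where $1<p\le2$ is used, to keep $p^*$ finite and $\ge 2$) and the bookkeeping on the normalisation $\sum_m \norm{y_m}^p \le 1$, which is what allows one to absorb the $\ell_p$ factor and recover the sharp constant $T_p^r(E)$ without loss.
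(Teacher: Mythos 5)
Your proof is correct and needs no repair: the duality identity for $\bigl(\sum_n \norm{x_n^*}^{p^*}\bigr)^{\nicefrac{1}{p^*}}$, the Hahn--Banach choice of norming vectors, the orthogonality $\EE[r_mr_n]=\delta_{mn}$, Cauchy--Schwarz in $\ELL_2(\Omega)$, and the type-$p$ inequality applied to $(y_m)$ fit together exactly as you describe, and the constant $T_p^r(E)$ comes out sharp. This is essentially the same argument as the one behind the paper's statement --- the paper itself gives no proof, deferring to \cite[Proposition 11.10]{DJT}, and the proof there is precisely this duality/orthogonality computation.
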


\noindent La réciproque est évidemment fausse, puisque $\ell_1$ est de
cotype $2$ alors que son prédual $c_0$ n'est de type $p$ pour aucun
$p>1$. De même le fait que $E$ soit de cotype fini n'implique rien
sur le type de $E^*$, puisque $\ell_1$ est de cotype 2 alors que son
dual $\ell_{\infty}=c_0^{**}$ n'est de type $p$ pour aucun $p>1$. Par
contre puisque $E$ et $E^{**}$ ont même type et cotype le fait que
$E^*$ soit de type $p$ implique que $E$ est de cotype $p^*$.

\noindent On vérifie que pour $1\le p <+\infty$ un espace $\ELL_p$ de
dimension infinie est de type $\min(p,2)$ et de cotype $\max(p,2), $ et
que ces résultats sont optimaux. Un espace $\ELL_{\infty}$ de
dimension infinie ne peut être de type non-trivial ou de cotype
fini. Il était d'ailleurs à priori évident que si $E$ contient une
copie de $c_0$ alors $E$ n'est pas de cotype fini puisque si
$(e_n)=(\delta_{m,n})_{m\ge 1}$ alors
\[
\left (\sum   \limits_{n=1}^N\Vert r_ne_n\Vert^q \right )^{\nicefrac{1}{q}}=N
\quad\text{tandis que}\quad
\left (\EE \sum \limits_{n=1}^N\Vert r_ne_n\Vert ^2 \right )^{\demi}=1.
\]
\noindent Un espace de Hilbert est à la fois de type 2 et de cotype 2, et un
résultat profond de Kwapień \cite{K2} montre que réciproquement
les seuls espaces de Banach qui sont à la fois de type 2 et de
cotype 2 sont les espaces isomorphes aux espaces de Hilbert.

\smallskip

On a le résultat suivant, voir \cite[Proposition 2.6]{jvn}, qui montre que
les espaces de suites $\gamma$-sommables et Rademacher-sommables
d'éléments de $E$ coïncident si $E$ est de cotype
fini.

\begin{proposition} 
  Si $E$ est de cotype fini, alors pour $1 \le p <+\infty$ il existe
  une constante positive $C_{p,E}$ telle que pour toute famille finie
  $(x_1,\dots, x_N)$ d'éléments de $E$ on ait
\[
     \EE \left \Vert \sum \limits_{n=1}^{N} \gamma_nx_n\right \Vert^p\le C_{p,E} \EE \left \Vert \sum \limits_{n=1}^{N} r_nx_n\right \Vert^p.
\]
\end{proposition}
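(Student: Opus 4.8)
Only one inequality needs proof: the reverse estimate $m_1\bigl(\EE\|\sum_n r_n x_n\|^2\bigr)^{\demi}\le\bigl(\EE\|\sum_n\gamma_n x_n\|^2\bigr)^{\demi}$ holds in \emph{every} Banach space (it follows from $\gamma_n\overset{d}{=}r_n|\gamma_n|$ and Jensen, exactly as in the comparison $\gamma^\infty\subset\err^\infty$ recalled above), whereas the domination of the Gaussian sum by the Rademacher sum is the genuine content. By Kahane's inequalities (Théorème~\ref{thm:kahane}) all $L^p$-norms of a fixed Rademacher, respectively Gaussian, sum are mutually equivalent, so it suffices to establish the estimate for one convenient exponent; I will put the exponent $2$ on the left and use Kahane to pass freely between exponents on the right. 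It is worth keeping in mind why finite cotype cannot be dropped: for $E=c_0$ and $x_n=e_n$ one has $\EE\|\sum_{n\le N}\gamma_n e_n\|=\EE\max_{n\le N}|\gamma_n|\sim\sqrt{2\log N}$ while $\EE\|\sum_{n\le N}r_n e_n\|=1$, so the ratio diverges. The cotype hypothesis must therefore be used precisely to tame the large values of the coefficients $|\gamma_n|$.

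The plan is to write $\gamma_n=r_n|\gamma_n|$, where $(r_n)$ is Rademacher and independent of $(|\gamma_n|)$ (the sign of a Gaussian is independent of its modulus, as already used above), and to peel off the size of $|\gamma_n|$ by the layer-cake formula $|\gamma_n|=\int_0^{\infty}\indicatrice_{\{|\gamma_n|>t\}}\,dt$. This gives the pointwise identity
\[
   \sum_{n=1}^N \gamma_n x_n = \int_0^{\infty} \Bigl( \sum_{n=1}^N r_n \, \indicatrice_{\{|\gamma_n|>t\}} x_n \Bigr) \, dt ,
\]
and Minkowski's integral inequality in $L^2(\Omega;E)$ yields
\[
   \Bigl( \EE \Bigl\| \sum_{n=1}^N \gamma_n x_n \Bigr\|^2 \Bigr)^{\demi} \le \int_0^{\infty} \Bigl( \EE \Bigl\| \sum_{n=1}^N r_n \, \indicatrice_{\{|\gamma_n|>t\}} x_n \Bigr\|^2 \Bigr)^{\demi} \, dt .
\]
For fixed $t$ the variables $\indicatrice_{\{|\gamma_n|>t\}}$ are independent Bernoulli variables of parameter $\rho_t:=\PP(|\gamma_1|>t)$, independent of $(r_n)$; writing $S$ for the random set they select, the integrand is the root mean square of the sub-sampled Rademacher sum $\sum_{n\in S}r_n x_n$. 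For small $t$ Kahane's contraction principle bounds it by $R:=\bigl(\EE\|\sum_n r_n x_n\|^2\bigr)^{\demi}$; the whole point is to produce enough decay in $t$ for the integral to converge.

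The crux is the following sub-sampling estimate, which is where cotype enters. Denoting by $q<+\infty$ a cotype of $E$ and by $S$ a random subset including each index independently with probability $\rho$, I claim
\[
   \EE_S \, \EE_r \Bigl\| \sum_{n\in S} r_n x_n \Bigr\|^2 \le C_q^2 \, \rho^{\nicefrac{2}{q}} \, R^2 .
\]
Granting this and inserting $\rho=\rho_t\le e^{-t^2/2}$ gives $\int_0^{\infty}\rho_t^{\nicefrac{1}{q}}\,dt\le\int_0^{\infty}e^{-t^2/2q}\,dt<+\infty$, whence $\EE\|\sum_n\gamma_n x_n\|^2\le C\,R^2$, and Kahane upgrades this to every exponent $p$. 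To reduce the sub-sampling estimate to a single use of cotype one passes, via Lyapunov's inequality ($2\le q$) and the concavity of $s\mapsto s^{\nicefrac{2}{q}}$, to the $q$-th moment form
\[
   \EE_S \, \EE_r \Bigl\| \sum_{n\in S} r_n x_n \Bigr\|^q \le C_q^q \, \rho \, \EE_r \Bigl\| \sum_{n} r_n x_n \Bigr\|^q ,
\]
after which Kahane restores the exponent $2$ on the right.

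I expect this last inequality to be the main obstacle. It is exactly the statement that fails in $c_0$, so it genuinely encodes finite cotype and cannot be obtained by the contraction principle alone: bounding $\EE_r\|\sum_{n\in S}r_n x_n\|$ by $(\max_n\indicatrice_{\{|\gamma_n|>t\}})\,R$ only reproduces the spurious factor $\EE[\max_n|\gamma_n|^2]\sim\log N$. The required gain must come from the cancellation among the \emph{few} indices where $|\gamma_n|$ is large, and controlling an averaged sub-sampled Rademacher sum by the full one with the sharp power $\rho^{\nicefrac{2}{q}}$ is precisely the information that the finite cotype of $E$ supplies; this is the step I would spend the most care on, handling the rare large-coefficient indices separately from the bounded bulk.
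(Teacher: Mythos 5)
Your reduction is sound as far as it goes: the factorization $\gamma_n=r_n|\gamma_n|$ with signs independent of moduli, the layer-cake decomposition, Minkowski's integral inequality in $L^2(\Omega;E)$, the tail bound $\rho_t\le e^{-t^2/2}$ making $\int_0^\infty\rho_t^{\nicefrac1q}\,dt$ finite, and the final Kahane upgrade from the exponent $2$ to every $p$ are all correct, and the overall strategy is the classical one (it is essentially the proof behind van Neerven's Proposition~2.6 --- which is all the paper offers here: the proposition is stated with a citation to \cite[Proposition~2.6]{jvn} and no proof). But your attempt has a genuine gap exactly where you flag it: the sub-sampling estimate
\[
   \EE_S\,\EE_r\Bignorm{\sum_{n\in S} r_n x_n}^q \;\le\; C_q^q\,\rho\;\EE_r\Bignorm{\sum_{n} r_n x_n}^q
\]
carries the entire content of the proposition (everything else is soft), and you assert it rather than prove it. Moreover, the closing hint you give for attacking it --- ``handling the rare large-coefficient indices separately from the bounded bulk'' --- is a thresholding idea of the same nature as the one you yourself showed only reproduces the spurious $\log N$ factor; it does not by itself produce the gain $\rho$.

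The missing argument is a randomization trick, not a splitting. Take $\rho=\nicefrac1m$, assign each index $i\le N$ independently and uniformly to one of $m$ groups $A_1,\dots,A_m$, and apply the cotype-$q$ inequality to the $m$ vectors $y_j=\sum_{i\in A_j}r_ix_i$ with a fresh Rademacher sequence $(\tilde r_j)_{j\le m}$: this gives $\sum_j\|y_j\|^q\le C_q^q\bigl(\EE_{\tilde r}\bignorm{\sum_j\tilde r_jy_j}^2\bigr)^{\nicefrac q2}$. Conditionally on the partition and on $\tilde r$, the signs $\tilde r_{j(i)}r_i$ form again an independent Rademacher sequence, so after taking expectations (with Jensen, $q\ge 2$, to replace the inner second moment by the $q$-th moment) the right-hand side becomes $C_q^q\,\EE_r\bignorm{\sum_i r_ix_i}^q$; by exchangeability of the $m$ groups, each group --- and $A_1$ is exactly a Bernoulli($\rho$) selector set --- contributes at most $C_q^q\,\rho\,\EE_r\bignorm{\sum_i r_ix_i}^q$. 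General $\rho$ follows by monotonicity in $\rho$, which is Jensen again, since a subset sum is a conditional expectation of the full sum. With this lemma inserted, your proof closes and is a legitimate self-contained alternative to the paper's citation.
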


\noindent Par conséquent on a trivialement $\gamma(E; F)=\Pi_{ps}(E; F)$ si $F$
est de cotype fini, ce qui résulte aussi du fait qu'un espace de
cotype fini ne contient aucun sous-espace fermé isomorphe à $c_0$.
Le résultat suivant est dû à Linde et Pietsch, voir 
\cite{lp,Pietsch,mau}.

\begin{theorem} \label{thm:LP-cotype2}
  Soit $F$ un espace de Banach. Alors les conditions suivantes sont
  équivalentes.
  \begin{aufzi}
  \item \label{item:LP-cotype2-c} $F$ est de cotype 2.
  \item \label{item:LP-cotype2-a} $\Pi_{2}(E; F)=\gamma^{\infty}(E; F)$ pour tout espace de Banach
    $E$.
  \item \label{item:LP-cotype2-b} $\Pi_{2}(H;F)=\gamma^{\infty}(H;F)$ pour tout espace de
    Hilbert $H$.
  \end{aufzi}
\end{theorem}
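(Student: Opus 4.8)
The plan is to establish the cycle \ref{item:LP-cotype2-c}$\,\Rightarrow\,$\ref{item:LP-cotype2-a}$\,\Rightarrow\,$\ref{item:LP-cotype2-b}$\,\Rightarrow\,$\ref{item:LP-cotype2-c}. The starting observation, valid for \emph{every} Banach space $E$, is that one inclusion is free: by Proposition~\ref{prop:p-sommant-implique-radonifiant} with $p=2$ every $2$-summing operator is $\gamma$-radonifying, hence a fortiori $\gamma$-summing with $\Vert u\Vert_{\gamma^\infty}\le c_2\,\pi_2(u)$. Thus $\Pi_2(E;F)\subset\gamma^\infty(E;F)$ always, and in both \ref{item:LP-cotype2-a} and \ref{item:LP-cotype2-b} the only content is the reverse inclusion $\gamma^\infty\subset\Pi_2$. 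The implication \ref{item:LP-cotype2-a}$\,\Rightarrow\,$\ref{item:LP-cotype2-b} is then the trivial specialisation $E=H$.

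For \ref{item:LP-cotype2-c}$\,\Rightarrow\,$\ref{item:LP-cotype2-a} I would argue directly. Assume $F$ has cotype $2$; by the equivalence between gaussian and Rademacher cotype recalled above, there is a constant $C^\gamma_2(F)$ such that $\left(\sum_n\Vert z_n\Vert^2\right)^\demi\le C^\gamma_2(F)\left(\EE\Vert\sum_n\gamma_n z_n\Vert^2\right)^\demi$ for every finite family in $F$. Let $u\in\gamma^\infty(E;F)$ and $(x_n)\in\ell_2^\FAIBLE(E)$. Applying this to $z_n=ux_n$ and then the defining bound~(\ref{eq gamma}) of the $\gamma^\infty$-norm, one gets for every $N$
\[
\left(\sum_{n=1}^N\Vert ux_n\Vert^2\right)^\demi\le C^\gamma_2(F)\left(\EE\Bignorm{\sum_{n=1}^N\gamma_n ux_n}^2\right)^\demi\le C^\gamma_2(F)\,\Vert u\Vert_{\gamma^\infty}\,\Vert(x_n)\Vert_{\ell_2^\FAIBLE(E)}.
\]
Letting $N\to+\infty$ one recognises condition~(\ref{eq:p-summing}) with $p=2$, so $u\in\Pi_2(E;F)$ and $\pi_2(u)\le C^\gamma_2(F)\Vert u\Vert_{\gamma^\infty}$.

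The crux is \ref{item:LP-cotype2-b}$\,\Rightarrow\,$\ref{item:LP-cotype2-c}, which I would prove by testing the hypothesis on diagonal operators out of $H=\ell_2$. First, the two Banach spaces $\Pi_2(\ell_2;F)$ (complete for $\pi_2$) and $\gamma^\infty(\ell_2;F)$ (complete for $\Vert\cdot\Vert_{\gamma^\infty}$) coincide by hypothesis, and the inclusion $\Pi_2\subset\gamma^\infty$ is continuous; the open mapping theorem therefore furnishes a constant $K$ with $\pi_2(u)\le K\,\Vert u\Vert_{\gamma^\infty}$ for all $u\in\gamma^\infty(\ell_2;F)$. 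Given $y_1,\dots,y_N\in F$, define $u:\ell_2\to F$ by $ue_n=y_n$ for $n\le N$ and $ue_n=0$ otherwise, where $(e_n)$ is the canonical basis. On the one hand, testing on the orthonormal sequence $(e_n)$, whose $\ell_2^\FAIBLE$-norm equals $1$, gives $\left(\sum_{n=1}^N\Vert y_n\Vert^2\right)^\demi\le\pi_2(u)$. On the other hand, the description~(\ref{eq:gamma-summing-equiv}) of the $\gamma^\infty$-norm on a Hilbert domain, together with the rotation invariance of the gaussian vector (which makes all orthonormal bases of $\mathrm{span}(e_1,\dots,e_N)$ yield the same value) and the monotonicity of the second moments of the partial sums, gives $\Vert u\Vert_{\gamma^\infty}=\left(\EE\Bignorm{\sum_{n=1}^N\gamma_n y_n}^2\right)^\demi$. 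Combining, $\left(\sum_{n=1}^N\Vert y_n\Vert^2\right)^\demi\le K\left(\EE\Bignorm{\sum_{n=1}^N\gamma_n y_n}^2\right)^\demi$, which is exactly the gaussian cotype $2$ inequality, so $F$ has cotype $2$.

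The main obstacle I anticipate lies in this last step: producing the \emph{uniform} comparison constant $K$ — this is where completeness of both operator spaces and the open mapping theorem are essential, since a mere pointwise equality of the two classes would not by itself yield a constant — together with the exact identification of $\Vert u\Vert_{\gamma^\infty}$ for the diagonal operator. The remaining ingredients are the elementary testing inequalities above and the routine use of Kahane's inequality (Théorème~\ref{thm:kahane}) to pass freely between $L^2$- and $L^p$-moments wherever convenient.
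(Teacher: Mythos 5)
Your proof is correct and takes essentially the same route as the paper's: the same cycle \ref{item:LP-cotype2-c}$\Rightarrow$\ref{item:LP-cotype2-a}$\Rightarrow$\ref{item:LP-cotype2-b}$\Rightarrow$\ref{item:LP-cotype2-c}, with the cotype-$2$ inequality applied to the images $(ux_n)$ for the first implication, and, for the last, a uniform constant obtained by a Baire-category argument (you invoke the open mapping theorem where the paper uses the closed graph theorem via the continuous embeddings into $\BOUNDED(\ell_2;F)$) followed by testing on the finite-rank operator $e_n\mapsto y_n$ on $\ell_2$. The only cosmetic differences are your use of Gaussian rather than Rademacher cotype (equivalent, as recalled in Section~4) and your more explicit identification of $\Vert u\Vert_{\gamma^\infty}$ for the test operator, which both you and the paper ultimately justify by the remark around~(\ref{eq:gamma-summing-equiv}).
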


\begin{proof} Supposons que $F$ est de cotype 2, posons $c=C_{2,F}^r$,
  et soit $u \in \gamma^\infty(E; F)=\err^\infty(E; F)$. On a, pour toute
  famille finie $(x_1,\dots,x_N)$ d'éléments de $E$,
\[
    \sum_{n=1}^N\Vert ux_n\Vert^2\le c^2\EE\left \Vert \sum \limits _{n=1}^Nr_nux_n\right \Vert^2\le c^2 \Vert u \Vert_{\gamma(E; F)}\sup_{\Vert x^*\Vert\le 1}\sum \limits_{n=1}^N\left | \langle x_n,x \rangle \right  |^2.
\]
Donc $u \in \Pi_2(E; F)$, et $\pi_2(u) \le C^r_{2,F}\Vert u \Vert _{\err^\infty(E; F)}$.

Il est clair que \ref{item:LP-cotype2-a} implique
\ref{item:LP-cotype2-b}.  Supposons finalement que
\ref{item:LP-cotype2-b} est vérifié. Donc
$\err^\infty(\ell_2,F)=\gamma^{\infty}(\ell_2,F)=\Pi_2(\ell_2,F)$. Comme
les injections de $\gamma(\ell_2,F)$ et $\Pi_2(E; F)$ dans ${\mathcal
  L}(\ell_2,F)$ sont continues, le graphe de l'injection $i:
\gamma(\ell_2,F)\to \Pi_2(\ell_2,F)$ est fermé dans
$\gamma(\ell_2,F)\times \Pi_2(E; F)$, $i$ est continue, et il existe
une constante $c\ge 0$ telle que $\pi_2(u) \le c\Vert u \Vert
_{\gamma(\ell_2,F)}$ pour tout $u \in \gamma(\ell_2,F)$.
Soit $(x_1,\dots, x_N)$ une famille finie d'éléments de $E$,
$(\lambda_m)_{m\ge 1} \in \ell_2$.  Posons $u((\lambda_m))=\sum
\limits_{m=1}^N \lambda_mx_m$. Soit $(e_n)_{n\ge 1}$ la base
hilbertienne naturelle de $\ell_2$. On a
\begin{align*}
    \sum \limits_{n=1}^N\Vert x_n\Vert^2 = & \; \sum_{n=1}^N\Vert ue_n\Vert^2\le \pi^2_2(u)\le c^2\Vert u \Vert^2_{\gamma(l^2,F)}=\sup_{p\ge 1}\EE \left \Vert \sum_{n=1}^p\gamma_nue_n\right \Vert^2\\
    & \; =c^2 \EE \left \Vert \sum_{n=1}^N\gamma_nue_n\right \Vert^2=c^2\EE \left \Vert \sum_{n=1}^N\gamma_nx_n\right \Vert^2\le \tfrac{c^2\pi}{2}\EE \left \Vert \sum_{n=1}^Nr_nx_n\right \Vert^2.
\end{align*}
Donc $F$ est de cotype 2.
\end{proof}

\noindent Le résultat suivant, voir \cite [Corollaire 11.16]{DJT}, que nous
donnons sans démonstration, complète le théorème
précédent.

\begin{theorem}
  \begin{aufzi}
  \item Si $E$ est de cotype $2$, alors $\Pi_2(E; F)=\Pi_1(E; F)$.
  \item Si $E$ est de cotype $<q<+\infty$, alors
    $\Pi_r(E; F)=\Pi_1(E; F)$ pour $1\le r < q^*$.
\end{aufzi}
\end{theorem}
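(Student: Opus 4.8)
Dans les deux cas l'inclusion $\Pi_1(E;F)\subset\Pi_r(E;F)$ (resp. $\Pi_1(E;F)\subset\Pi_2(E;F)$) n'est autre que l'inclusion de Hölder rappelée après la définition~\ref{def:p-summing}; on se ramène donc à établir l'inclusion inverse $\Pi_r(E;F)\subset\Pi_1(E;F)$. Fixons $u\in\Pi_r(E;F)$ et une famille finie $(x_n)_{1\le n\le N}$ d'éléments de $E$. Le premier ingrédient sera la traduction du cotype: en combinant la définition du cotype $q$ de $E$ avec l'identité $\sup_{\eps_n=\pm1}\norm{\sum_n\eps_nx_n}=\norm{(x_n)}_{\ell_1^\FAIBLE(E)}$ établie à la section~2, on obtient
\[
   \Big(\sum_{n=1}^N\norm{x_n}^q\Big)^{\nicefrac1q}
   \le C^r_q(E)\Big(\EE\Bignorm{\sum_n r_nx_n}^2\Big)^\demi
   \le C^r_q(E)\,\norm{(x_n)}_{\ell_1^\FAIBLE(E)},
\]
c'est-à-dire que l'identité de $E$ est $(q,1)$-sommante; c'est ce contrôle $\ell^q$ des normes «sur l'espace de départ» qui portera toute l'information de cotype.

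Le second ingrédient sera l'inégalité $r$-sommante de $u$, appliquée non pas à la famille $(x_n)$ mais aux familles repondérées $(t_nx_n)$, $t_n\ge0$, que l'on choisit de façon à contracter la norme faible du membre de droite. Avec le poids $t_n=\norm{x_n}^{-\nicefrac1{r^*}}$ et la majoration $|\langle x_n,x^*\rangle|\le\norm{x_n}$, on a $\norm{(t_nx_n)}_{\ell_r^\FAIBLE(E)}\le\norm{(x_n)}_{\ell_1^\FAIBLE(E)}^{\nicefrac1r}$, d'où l'estimation
\[
   \sum_{n=1}^N\frac{\norm{ux_n}^r}{\norm{x_n}^{r-1}}\;\le\;\pi_r(u)^r\,\norm{(x_n)}_{\ell_1^\FAIBLE(E)}.
\]
Il resterait à recombiner cette estimation avec le contrôle $\ell^q$ ci-dessus pour majorer $\sum_n\norm{ux_n}$. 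La difficulté principale — le c\oe{}ur de la preuve — réside précisément dans cette recombinaison: l'inégalité de Hölder brute, appliquée à l'identité $\norm{ux_n}=(\norm{ux_n}^r\norm{x_n}^{-(r-1)})^{\nicefrac1r}\norm{x_n}^{\nicefrac1{r^*}}$, impose l'exposant conjugué $r^*$ et fait donc apparaître la quantité $\sum_n\norm{x_n}$, que le cotype ne borne pas. Pour réellement exploiter le contrôle $\ell^q$ (et non seulement $\ell^1$) des normes, on doit faire appel au théorème de mélange pour les opérateurs $(s,t)$-sommants de \cite[chapitre~11]{DJT}: c'est l'ajustement fin des exposants dans ce mécanisme qui produit la contrainte $r<q^*$, le cas limite $r=q^*=2$ lorsque $q=2$ étant inclus.

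Pour le cas (a) on peut privilégier une voie parallèle, plus géométrique: par le théorème de factorisation de Pietsch (corollaire~\ref{cor:pietsch}) tout $u\in\Pi_2(E;F)$ s'écrit $u=\tilde u\circ j_2\circ\iota_E$, et par la propriété d'idéal il suffit alors de montrer que l'opérateur canonique $j_2\circ\iota_E:E\to\ELL_2(\mu)$ est absolument sommant pour toute mesure de probabilité $\mu$ sur $B_{E^*}$, dès que $E$ est de cotype~$2$. C'est la forme abstraite du théorème de Grothendieck--Maurey, dont le cas $E=\ell_1$ n'est autre que l'énoncé de Grothendieck rappelé avant le théorème~\ref{thm:grothendieck}; le cas général s'en déduit par le même argument de mélange, le cas~(b) s'obtenant pour $1\le r<q^*$ et le cas limite $q=r=2$ comme bord de cet argument.
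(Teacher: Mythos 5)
First, a point you could not have known: the paper gives this theorem \emph{without} proof --- it is introduced by ``que nous donnons sans démonstration'', with a bare reference to \cite[Corollaire 11.16]{DJT} --- so there is no internal argument to compare yours against; the only fair benchmark is the proof in \cite{DJT} that both you and the paper invoke. Judged on its own, your sketch assembles the correct ingredients, and the computations you actually carry out are right: combining the cotype-$q$ inequality with the identity $\sup_{\eps_n=\pm1}\bignorm{\sum_n\eps_nx_n}=\norm{(x_n)}_{\ell_1^\FAIBLE(E)}$ from section~2 does show that $\mathrm{id}_E$ is $(q,1)$-sommant; the reweighted estimate $\sum_n\norm{ux_n}^r\norm{x_n}^{-(r-1)}\le\pi_r(u)^r\,\norm{(x_n)}_{\ell_1^\FAIBLE(E)}$ is valid; you are right that the naive Hölder recombination reintroduces $\sum_n\norm{x_n}$, which cotype does not control; and your remark on the endpoint (le cas $r=q^*$ n'est admis que pour $q=2$) is exact.

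The gap is precisely where you say it is, and it is genuine: the recombination step --- the only place where the hypothesis $r<q^*$ is used, and hence the entire mathematical content of the theorem beyond the trivial inclusion $\Pi_1\subset\Pi_r$ --- is not carried out. You delegate it to a ``théorème de mélange'' of \cite[chapitre~11]{DJT} which you neither state nor verify applies; this is Maurey's theorem, and it is genuinely hard (that the endpoint $r=q^*$ \emph{fails} for $q>2$, a result of Talagrand, shows how delicate the exponent bookkeeping is --- no soft Hölder or interpolation juggling of your two estimates will produce it). The alternative route you propose for (a) does not close this gap either: reducing, via the factorisation de Pietsch and the ideal property, to the $1$-sommabilité of $j_2\circ\iota_E:E\to\ELL_2(\mu)$ for every probability measure $\mu$ on $B_{E^*}$ is a correct equivalence, but it is only a reformulation, and you again defer its proof to ``le même argument de mélange''. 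So: right skeleton, honest signposting of the difficulty, correct side computations, but the decisive step is cited rather than proved --- which, in fairness, is also all the paper itself does.
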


\smallskip

Pour $1 \le p \le \infty$, notons $\Gamma_p(E; F)$ l'ensemble des
opérateurs $u: E\to F$ admettant une factorisation de la forme
$u=v\circ w$, avec $v\in \BOUNDED(E;\ELL_p(\Omega, \mu)$ et $w \in
\BOUNDED(\ELL_p(\Omega, \mu);F)$ pour un certain espace mesuré
$(\Omega, {\mathcal B}, \mu)$. La proposition 2.12 de \cite{jvn}
montre que si $u: H \to F$ est $\gamma$-radonifiant, alors $u^*: H \to
F^*$ est $2$-sommant. Nous verrons plus loin un résultat un peu
plus général, valable pour les applications
$\gamma$-sommantes. Ces résultats admettent une réciproque si $E$
est de type 2. Ceci est une conséquence du théorème suivant,
voir \cite[Théorème~12.10]{DJT}.

\begin{theorem}\label{thm:equiv-type2} Soit $F$ un espace de Banach. Alors les conditions
  suivantes sont équivalentes
  \begin{aufzi}
  \item \label{item:equiv-type2-a} $F$ est de type 2.
  \item \label{item:equiv-type2-b} $\BOUNDED(\ell_1;F)=\gamma(\ell_1;F)$.
  \item \label{item:equiv-type2-c} $\Gamma_1(E; F)\subset \gamma(E; F)$ pour tout espace de Banach
    $E$.
\end{aufzi}
\end{theorem}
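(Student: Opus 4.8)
Le plan est de démontrer le cycle d'implications \ref{item:equiv-type2-a} $\Rightarrow$ \ref{item:equiv-type2-c} $\Rightarrow$ \ref{item:equiv-type2-b} $\Rightarrow$ \ref{item:equiv-type2-a}. L'implication \ref{item:equiv-type2-c} $\Rightarrow$ \ref{item:equiv-type2-b} est immédiate : comme $\ell_1 = \ELL_1(\N)$ muni de la mesure de comptage, l'identité de $\ell_1$ fournit pour tout $u \in \BOUNDED(\ell_1;F)$ la factorisation triviale $u = u \circ \mathrm{id}_{\ell_1}$, de sorte que $\BOUNDED(\ell_1;F) = \Gamma_1(\ell_1;F)$ ; ainsi \ref{item:equiv-type2-c} entraîne $\BOUNDED(\ell_1;F) \subset \gamma(\ell_1;F)$, l'inclusion réciproque étant toujours vraie.

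Pour \ref{item:equiv-type2-b} $\Rightarrow$ \ref{item:equiv-type2-a}, je raisonnerais par le théorème du graphe fermé et un opérateur test explicite. Les espaces $\gamma(\ell_1;F)$ et $\BOUNDED(\ell_1;F)$ étant des espaces de Banach et l'injection $\gamma(\ell_1;F) \hookrightarrow \BOUNDED(\ell_1;F)$ étant continue, l'hypothèse \ref{item:equiv-type2-b} et le théorème de l'application ouverte fournissent une constante $C$ telle que $\Vert u \Vert_{\gamma(\ell_1;F)} \le C \Vert u \Vert$ pour tout $u$. Fixons alors $x_1,\dots,x_N \in F$, posons $\lambda_n = \Vert x_n \Vert$ et définissons $u \in \BOUNDED(\ell_1;F)$ par $u(e_n) = x_n/\lambda_n$ pour $n \le N$ (et $0$ sinon), de sorte que $\Vert u \Vert \le 1$. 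La suite finie $z_n = \lambda_n e_n$ vérifie $\Vert (z_n) \Vert_{\ell_2^\FAIBLE(\ell_1)} = (\sum_n \lambda_n^2)^{\demi}$ puisque $\ell_1^* = \ell_\infty$, tandis que $u z_n = x_n$. L'inégalité définissant la norme $\gamma$-radonifiante donne alors
\[
\EE \Bignorm{ \sum_{n=1}^N \gamma_n x_n }^2 = \EE \Bignorm{ \sum_{n=1}^N \gamma_n u z_n }^2 \le \Vert u \Vert_{\gamma(\ell_1;F)}^2 \, \Vert (z_n) \Vert_{\ell_2^\FAIBLE(\ell_1)}^2 \le C^2 \sum_{n=1}^N \Vert x_n \Vert^2,
\]
c'est-à-dire exactement l'inégalité de type $2$ gaussien ; comme type gaussien et type Rademacher coïncident, $F$ est de type $2$.

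Reste l'implication \ref{item:equiv-type2-a} $\Rightarrow$ \ref{item:equiv-type2-c}, où réside la vraie difficulté. Étant donné $u = v \circ w$ avec $w \in \BOUNDED(E;\ELL_1(\mu))$ et $v \in \BOUNDED(\ELL_1(\mu);F)$, l'image $(w x_n)$ d'une suite faiblement $\ell_2$ est encore faiblement $\ell_2$ dans $\ELL_1(\mu)$, de sorte que la propriété d'idéal de la classe $\gamma(\cdot\,;\cdot)$ ramène l'énoncé à montrer que tout opérateur borné $v : \ELL_1(\mu) \to F$ est lui-même $\gamma$-radonifiant, avec une borne ne dépendant que de $\Vert v \Vert$ et de la constante de type $2$ gaussien $T^\gamma_2(F)$. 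J'essaierais de l'établir directement à partir de l'inégalité $\EE \Vert \sum_n \gamma_n f_n \Vert^2 \le T^\gamma_2(F)^2 \sum_n \Vert f_n \Vert^2$ appliquée à $f_n = v g_n$.

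L'obstacle principal est que l'estimation naïve ramène la conclusion à $\sum_n \Vert g_n \Vert_{\ELL_1}^2 \le c \Vert (g_n) \Vert_{\ell_2^\FAIBLE(\ELL_1)}^2$, c'est-à-dire à la $2$-sommation de l'identité de $\ELL_1(\mu)$, ce qui est faux : une majoration grossière par la norme d'opérateur ne suffit pas. Le cœur de la démonstration est le théorème de Maurey — qui pour $F = \ELL_2$ n'est autre que l'inégalité de Grothendieck (Théorème~\ref{thm:grothendieck}) — affirmant que si $F$ est de type $2$ alors tout opérateur $\ELL_1(\mu) \to F$ est $2$-sommant ; joint à la Proposition~\ref{prop:p-sommant-implique-radonifiant}, ceci rend $v$ $\gamma$-radonifiant et clôt l'argument. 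Une preuve autonome diagonaliserait la covariance gaussienne de $\sum_n \gamma_n g_n$ et exploiterait le cotype $2$ de $F^*$ (dual du type $2$ de $F$) dans un argument à la Pietsch ; c'est l'étape que j'anticipe comme la plus délicate, et la raison pour laquelle l'énoncé est attribué à \cite[Théorème~12.10]{DJT}.
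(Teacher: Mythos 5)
Vos preuves des implications \ref{item:equiv-type2-c} $\Rightarrow$ \ref{item:equiv-type2-b} et \ref{item:equiv-type2-b} $\Rightarrow$ \ref{item:equiv-type2-a} sont correctes, et même plus détaillées que le texte de l'article, qui se contente de les qualifier d'élémentaires : la factorisation triviale $u=u\circ \mathrm{id}_{\ell_1}$, puis l'argument de graphe fermé combiné à l'opérateur test $u(e_n)=x_n/\Vert x_n\Vert$ appliqué à la suite $z_n=\Vert x_n\Vert e_n$, donnent bien l'inégalité de type $2$ gaussien, équivalente au type $2$ de Rademacher.

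En revanche, votre démonstration de l'implication cruciale \ref{item:equiv-type2-a} $\Rightarrow$ \ref{item:equiv-type2-c} repose sur un énoncé faux. Le "théorème de Maurey" que vous invoquez --- si $F$ est de type $2$, tout opérateur borné $v:\ELL_1(\mu)\to F$ est $2$-sommant --- n'est pas vrai. Contre-exemple : tout espace de Banach séparable est quotient de $\ell_1$ ; soit donc $Q:\ell_1\to\ell_4$ une application quotient surjective, où $\ell_4$ est de type $2$. Si $Q$ était $2$-sommante, elle se factoriserait d'après le Corollaire~\ref{cor:pietsch} à travers un espace de Hilbert, $Q=B\circ A$ avec $A:\ell_1\to \ELL_2(\mu)$ et $B:\ELL_2(\mu)\to\ell_4$ ; la surjectivité de $Q$ entraîne celle de $B$, donc $\ell_4$ serait isomorphe au quotient $\ELL_2(\mu)/\ker B$, c'est-à-dire à un espace de Hilbert, ce qui est absurde puisque le cotype optimal de $\ell_4$ est $4$. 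Le véritable théorème de Maurey généralisant l'inégalité de Grothendieck concerne les opérateurs de ${\mathcal C}(K)$ (sources $\ELL_\infty$) vers un espace de cotype $2$ ; sa dualisation naïve vers les sources $\ELL_1$ et les buts de type $2$, avec conclusion "$2$-sommant", est précisément ce qui tombe en défaut. Ce qui est vrai --- et c'est ce qu'utilisent \cite{DJT} et l'article, qui renvoie pour cette implication au Chapitre~12 de \cite{DJT} --- est la conclusion strictement plus faible tirée directement de l'inégalité de Grothendieck : si $F$ est de type $2$ et $v:\ELL_1(\mu)\to F$ est borné, alors $v$ est $\gamma$-sommant avec $\Vert v\Vert_{\gamma^\infty(\ELL_1(\mu);F)}\le T_2^{\gamma}(F)\,K_G\,\Vert v\Vert$ ; comme un espace de type $2$ ne contient aucune copie de $c_0$, on a $\gamma^\infty=\gamma$ et $v$ est donc $\gamma$-radonifiant. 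Votre réduction au cas $E=\ELL_1(\mu)$ par la propriété d'idéal est correcte, mais le détour par la $2$-sommabilité ne peut pas être réparé : il prouverait un énoncé strictement plus fort que \ref{item:equiv-type2-c}, et cet énoncé est faux.
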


\noindent Ce résultat est énoncé dans \cite{DJT} en utilisant la
classe des opérateurs presque sommants, qui est confondue dans
\cite{DJT} avec la classe $\err^\infty(E; F)=\gamma^{\infty}(E; F)$ des
opérateurs Rademacher-sommants. Cette confusion n'est pas gênante dans
ce cas, puisqu'un espace de Banach de type 2 ne contient pas de
sous-espace fermé isomorphe à $c_0$.  Il est élémentaire que
\ref{item:equiv-type2-c} implique \ref{item:equiv-type2-b} qui
implique \ref{item:equiv-type2-a}, compte tenu du fait que
$\gamma_{\infty}(E; F)=\err^\infty(E; F)$ pour tout espace de Banach
$E$. Nous renvoyons à \cite[Chapitre 12]{DJT} pour la démonstration
que \ref{item:equiv-type2-a} implique \ref{item:equiv-type2-c}. Notons
que l'inégalité de Grothendieck joue un rôle essentiel dans cette
preuve: si $u: \ELL_1(\Omega, \mu) \to F$ est un opérateur borné, et si
$F$ est de type 2, il résulte de l'inégalité donnée à \cite[page
245]{DJT} que $u \in \gamma^{\infty}(\ELL_1(\Omega, \mu), F)$ et que l'on
a
\begin{equation} 
   \Vert u \Vert_{\gamma^{\infty}}( \ELL_1(\Omega, \mu); F)\le T^{\gamma}_2(F)K_G\Vert u \Vert 
\end{equation}
où $K_G$ désigne la constante de Grothendieck. On obtient alors le
résultat suivant, voir \cite[Corollaire 12.21]{DJT}.

\begin{corollary} \label{cor:u-ast-2-sommant-u-gamma} Soit $E$ un
  espace de Banach, soit $F$ un espace de Banach de type $2$, et soit
  $u\in \BOUNDED(E; F)$. Si $u^* \in \BOUNDED(F^{*}; E^*)$ est
  $2$-sommant, alors $u$ est $\gamma$-radonifiant.
\end{corollary}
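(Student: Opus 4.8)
The plan is to deduce the conclusion from the factorization criterion of Théorème~\ref{thm:equiv-type2}, by dualizing a Pietsch factorization of the adjoint $u^*$. Concretely, I would show that $u^*$ being $2$-sommant forces $u$ to factor through an $\ELL_1$-space (into the bidual $F^{**}$), and then invoke the type-$2$ hypothesis on $F$, transferred to $F^{**}$.

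First, since $u^* \in \BOUNDED(F^*; E^*)$ is $2$-sommant, I would apply Corollaire~\ref{cor:pietsch} with a weak$*$-fermé norming subset $K$ of $B_{F^{**}}$ (for instance $K = B_{F^{**}}$ itself, which is weak$*$-compact and norming for $F^*$). This produces a probability measure $\mu$ on $K$ and an operator $b \in \BOUNDED(\ELL_2(\mu); E^*)$ with $u^* = b \circ j_2 \circ \iota_{F^*}$, where $\iota_{F^*}\colon F^* \to C(K)$ and $j_2\colon C(K) \to \ELL_2(\mu)$.

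Next, I would pass to adjoints: $u^{**} = \iota_{F^*}^* \circ j_2^* \circ b^*$, with $b^*\colon E^{**} \to \ELL_2(\mu)$ (using $\ELL_2(\mu)^* = \ELL_2(\mu)$). The key observation is that $j_2^*$ sends $g \in \ELL_2(\mu)$ to the measure $g\,d\mu \in C(K)^*$; since $\mu$ is a probability measure one has the continuous inclusion $\ELL_2(\mu) \hookrightarrow \ELL_1(\mu)$, so $j_2^*$ factors as $\ELL_2(\mu) \hookrightarrow \ELL_1(\mu) \to C(K)^*$, the last arrow being $g \mapsto g\,d\mu$. Composing, $u^{**}$ factors through $\ELL_1(\mu)$, and restricting the source to $E$ (recall $u = u^{**}|_E$) yields $v \in \BOUNDED(E; \ELL_1(\mu))$ and $w \in \BOUNDED(\ELL_1(\mu); F^{**})$ with $u = w \circ v$; that is, $u \in \Gamma_1(E; F^{**})$.

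Finally, since $F$ is of type $2$, its bidual $F^{**}$ is of type $2$ as well (\cite[Proposition 11.9]{DJT}), so Théorème~\ref{thm:equiv-type2} applied with $F^{**}$ in place of $F$ gives $\Gamma_1(E; F^{**}) \subset \gamma(E; F^{**})$, whence $u$ is $\gamma$-radonifiant as an operator into $F^{**}$. Thus for every $(x_n) \in \ell_2^\FAIBLE(E)$ the series $\sum_n \gamma_n u x_n$ converges in $\ELL_2(\Omega; F^{**})$; but each $u x_n$ lies in $F$, which is a sous-espace fermé of $F^{**}$, so the partial sums lie in the closed subspace $\ELL_2(\Omega; F)$ and so does their limit. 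Hence $(u x_n) \in \gamma(F)$, i.e. $u \in \gamma(E; F)$. The main obstacle is precisely this bidualization: the Pietsch factorization of $u^*$ only yields a factorization of $u$ landing in $F^{**}$ (the map $\iota_{F^*}^*$ is a ``barycenter'' taking values in $F^{**}$), so Théorème~\ref{thm:equiv-type2} cannot be applied directly to $\Gamma_1(E; F)$; this is resolved in the last step by descending from $F^{**}$ to its closed subspace $F$.
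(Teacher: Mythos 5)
Your proof is correct and follows essentially the same route as the paper's: Pietsch factorization of the $2$-sommant adjoint $u^*$, dualization to factor $u^{**}$ (hence $u$) through $\ELL_1(\mu)$ — the paper phrases your observation about $j_2^*$ as $j=j_0^*$ with $j_0:\ELL_2(\mu)\to\ELL_1(\mu)$ and uses reflexivity of $\ELL_2(\mu)$ — and then Théorème~\ref{thm:equiv-type2} applied to the type-$2$ space $F^{**}$. The only (harmless) difference is the final bookkeeping: you descend from $\gamma(E;F^{**})$ to $\gamma(E;F)$ via closedness of $F$ in $F^{**}$, whereas the paper restricts at the level of $\gamma^{\infty}$ and invokes $\gamma^{\infty}(E;F)=\gamma(E;F)$, which holds because a type-$2$ space contains no copy of $c_0$.
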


\begin{proof} Il résulte du théorème de factorisation de Pietsch
  qu'il existe un compact $K$ et une mesure de probabilité sur $K$
  tels que $u^*=w\circ i \circ j$, où $j: {\mathcal C}(K)\subset
  \ELL_{\infty}(\mu) \to \ELL_2(\mu)$ est l'injection canonique, où
  $i:F^*\to {\mathcal C}(K)$ est isométrique et où $w: \ELL_2(K,\mu)
  \to E^*$ est continue. Mais on a $j=j_0^*$, où $j_0: \ELL_2(\mu)\to
  \ELL_1(\mu)$ est l'injection canonique. Donc $j_0^{**}(\ELL_2(\mu))\subset
  \ELL_1(\mu)$, et $u^{**} \in \Gamma_1(E^{**}; F^{**})\subset
  \gamma^{\infty}(E^{**},F^{**})$ ce qui implique que $u \in
  \gamma^{\infty}(E; F)=\gamma(E; F)$.
\end{proof}

\noindent Notons que l'hypothèse que $F$ est de type 2 est nécessaire
pour obtenir ce résultat: si $F$ n'est pas de type 2 il existe $u:
\ell_2 \to E$ non $\gamma$-sommant tel que $u^*: E^*\to \ell_2$ soit
$2$-sommant, voir \cite[Théorème~12.3]{jvn}.

\section{Extensions de Hilbert-Schmidt par relèvement}
\noindent On dira qu'un opérateur $u \in \BOUNDED(E; F)$ admet une
factorisation à travers un espace de Banach $Z$ s'il existe $v \in
\BOUNDED(E;Z)$ et $v \in \BOUNDED(Z;F)$ tels que $u = w\circ
v$, et on dira que $u$ est universellement factorisable s'il est
factorisable à travers $Z$ pour tout espace de Banach $Z$. Une
caractérisation profonde des opérateurs de Hilbert-Schmidt est
donné par le résultat suivant, voir \cite[Théorème~19.2]{DJT}.

\begin{theorem} Soit $u \in \BOUNDED(H_1;H_2)$. Alors $u$ est un
  opérateur de Hilbert-Schmidt si et seulement si $u$ est
  universellement factorisable.
\end{theorem}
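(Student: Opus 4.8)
Je traite séparément les deux implications.

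\emph{La factorisation universelle entraîne Hilbert–Schmidt.} Si $u$ est universellement factorisable, alors en particulier $u$ se factorise à travers $c_0$ (ou à travers $C([0,1])$), qui est un espace $\mathcal{L}_{\infty,\lambda}$ pour tout $\lambda>1$. D'après le Corollaire~\ref{cor:HS_equivalences}, un opérateur $u\in\BOUNDED(H_1;H_2)$ qui se factorise à travers un espace $\mathcal{L}_\infty$ est de Hilbert–Schmidt; cette implication est donc immédiate compte tenu des résultats déjà établis (on pourrait aussi bien invoquer une factorisation à travers un espace $\mathcal{L}_1$).

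\emph{Hilbert–Schmidt entraîne la factorisation universelle.} Ici $Z$ parcourt les espaces de Banach de dimension infinie (un opérateur de Hilbert–Schmidt de rang infini ne peut se factoriser à travers un espace de dimension finie; les opérateurs de rang fini sont trivialement de Hilbert–Schmidt). Le plan est de se ramener à un modèle diagonal: d'après le Théorème~\ref{thm:HS}\ref{item:HS4}, on écrit $u=J_2\,D_\tau\,J_1$, où $J_1:H_1\to\ell_2$, $J_1x=(\langle x,e_n\rangle)_n$, est une coisométrie, $J_2:\ell_2\to H_2$, $J_2e_n=f_n$, une isométrie, et $D_\tau:\ell_2\to\ell_2$ l'opérateur diagonal $D_\tau e_n=\tau_n e_n$ avec $(\tau_n)\in\ell_2$ décroissante. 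Si $D_\tau=w\circ v$ se factorise à travers $Z$, alors $u=(J_2\circ w)\circ(v\circ J_1)$ aussi; il suffit donc de factoriser $D_\tau$. Fixons $\eps>0$: par le théorème de Dvoretzky, pour tout $n$ il existe un sous-espace $Z_n\subseteq Z$ et un isomorphisme $\alpha_n:\ell_2^n\to Z_n$ avec $\|\alpha_n\|\le1$ et $\|\alpha_n^{-1}\|\le1+\eps$. Soit $D_\tau^{(n)}$ la troncature de $D_\tau$ à $\ell_2^n$. L'opérateur $D_\tau^{(n)}\circ\alpha_n^{-1}:Z_n\to\ell_2^n$ est $2$-sommant, avec $\pi_2\le(1+\eps)\|\tau\|_2$ puisque $D_\tau^{(n)}$ est de rang fini et $\pi_2(D_\tau^{(n)})=\|D_\tau^{(n)}\|_{\HS}\le\|\tau\|_2$. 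Or un opérateur $2$-sommant d'un sous-espace dans un espace de Hilbert se prolonge à l'espace ambiant en contrôlant la norme $\pi_2$: on le factorise par $C(K)\xrightarrow{j_2}\ELL_2(\mu)$ comme au Corollaire~\ref{cor:pietsch}, on prolonge par Hahn–Banach le plongement isométrique $Z_n\hookrightarrow C(K)$ à $Z$, et on conserve le reste de la factorisation. On obtient ainsi $w_n:Z\to\ell_2^n\subseteq\ell_2$ avec $w_n\circ\alpha_n=D_\tau^{(n)}$ et $\|w_n\|\le(1+\eps)\|\tau\|_2$; en prolongeant $\alpha_n$ par $0$ en $v_n:\ell_2\to Z$ ($\|v_n\|\le1+\eps$) on a $w_n\circ v_n=D_\tau^{(n)}$, c'est-à-dire une factorisation de la troncature à travers $Z$ dont les constantes sont bornées uniformément en $n$. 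L'emploi du prolongement (plutôt que d'une projection $Z\to Z_n$) est essentiel: les constantes de projection des sections euclidiennes de dimension $n$ croissent en général (comme $\sqrt n$ dans $c_0$), ce qui ruinerait tout assemblage diagonal par blocs, alors que le prolongement de Hahn–Banach ne coûte rien.

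\emph{Passage à la limite.} Comme $(\tau_n)$ est décroissante et tend vers $0$, on a $\|D_\tau-D_\tau^{(n)}\|=\tau_{n+1}\to0$, donc $w_n\circ v_n\to D_\tau$ en norme avec $\sup_n\|v_n\|\,\|w_n\|\le(1+\eps)^2\|\tau\|_2$. Il reste à fabriquer, à partir de ces factorisations uniformément bornées des troncatures, une \emph{unique} factorisation de $D_\tau$ à travers $Z$ lui-même: c'est là l'obstacle principal. Les sections de Dvoretzky $Z_n$ ne peuvent pas être choisies emboîtées (leur réunion serait un sous-espace hilbertien de dimension infinie, qui n'existe pas en général), de sorte qu'il n'y a pas de limite inductive naïve. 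Une limite selon un ultrafiltre des couples $(v_n,w_n)$ fournit une factorisation à travers l'ultrapuissance $Z^{\mathcal U}$, et il faut ensuite redescendre à $Z$. On utilise pour cela que $D_\tau$ est compact (étant de Hilbert–Schmidt): la représentabilité finie de $Z^{\mathcal U}$ dans $Z$, jointe à un argument de réflexivité locale, permet de transporter une factorisation d'un opérateur compact à travers $Z^{\mathcal U}$ en une factorisation à travers $Z$, au prix d'une augmentation arbitrairement petite de la constante. Cette étape de descente — faire cohérer les sections euclidiennes à travers les échelles pour l'opérateur compact $D_\tau$ — est le cœur de la difficulté, l'algèbre des factorisations de dimension finie ci-dessus étant routinière.
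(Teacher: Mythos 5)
Your easy direction is fine (universal factorization gives a factorization through $c_0$, an $\mathcal L_\infty$-space, and the Corollaire~\ref{cor:HS_equivalences} applies), and your reading of the statement with $Z$ ranging over \emph{infinite-dimensional} Banach spaces is the correct one. The reduction to the diagonal model $D_\tau$, the use of Dvoretzky sections, and the extension theorem for $2$-summing operators to factor each truncation $D_\tau^{(n)}$ through $Z$ with constants bounded by $(1+\eps)^2\norm{\tau}_2$ are all correct. But the proof stops exactly where the theorem begins, and you say so yourself: uniformly bounded factorizations of norm-approximants do not factor the limit, and the ``descent'' you sketch is not a proof. Getting a factorization of $D_\tau$ through the ultrapower $Z^{\mathcal U}$ is indeed easy (your $(v_n,w_n)$ give it, after composing with the orthogonal projection of $(\ell_2)^{\mathcal U}$ onto $\ell_2$), so \emph{all} of the content of the theorem sits in the passage from $Z^{\mathcal U}$ back to $Z$, which you assert via ``représentabilité finie $+$ réflexivité locale $+$ compacité'' without proof or precise reference. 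That general principle is in fact false: $\ell_2$ is finitely representable in $\ell_1$, and a diagonal operator $D_\sigma:\ell_2\to\ell_2$ with $\sigma\in c_0\setminus\ell_2$ is compact and factors (trivially) through $\ell_2$, yet it cannot factor through $\ell_1$, since by the Théorème~\ref{thm:grothendieck} any operator $\ell_1\to\ell_2$ is $2$-sommant, which would force $D_\sigma$ to be Hilbert-Schmidt. So compactness plus finite representability of the intermediate space cannot carry the descent; whatever would make an ultrapower version true must contain the real work, and it is missing. This is a genuine gap, not a routine detail.

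For comparison: the paper itself gives no proof (it cites \cite[Théorème~19.2]{DJT}), and the proof there resolves precisely this assembly problem, not by a limiting procedure after the fact, but by making the finite-dimensional factorizations cohere from the outset. One picks a basic sequence $(z_i)$ in $Z$, splits $\tau$ into successive finite blocks $\tau^{(k)}$ chosen so small that $\sum_k \norm{\tau^{(k)}}_2^{\nicefrac12}<\infty$, and uses the finite-dimensional Dvoretzky theorem to place a $(1{+}\eps)$-copy of $\ell_2^{n_k}$ inside the span of the $k$-th (sufficiently long) block of $(z_i)$. The basis projections $Q_k$ are then uniformly bounded and vanish on all other blocks — this kills exactly the cross terms that your independently chosen sections $Z_n$ cannot control — and distributing the weights $\norm{\tau^{(k)}}_2^{\nicefrac12}$ between the two factors makes the series defining $v:\ell_2\to Z$ and $w$ absolutely convergent, with $w$ $2$-sommant on the closed span of the basic sequence; a final application of the $2$-summing extension theorem lifts $w$ to all of $Z$. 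Your toolkit (Dvoretzky sections, $2$-summing extension rather than projections) is the right one; what is missing is this blocking-with-weights construction, which is where the theorem actually lives.
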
 

\noindent On introduit maintenant la notion d'espace de Hilbert-Schmidt, voir
\cite[Chapitre 19]{DJT}.

\begin{definition} Soit $E$ un espace de Banach. On dit que $E$ est un
  espace de Hilbert-Schmidt si tout opérateur $u:H_1\to H_2$
  admettant une factorisation à travers $E$ est un opérateur de
  Hilbert-Schmidt, et on note $\ESPACEHS$ la classe des espaces de
  Hilbert-Schmidt.
\end{definition}

\noindent Il est clair que $E \in \ESPACEHS$ si et seulement si $E^*\in
\ESPACEHS$. La classe des espaces de Hilbert-Schmidt contient les classes
$\ELL_1$ et $\ELL_{\infty}$, mais elle est beaucoup plus vaste. Par
exemple si $E$ est un sous-espace fermé de ${\mathcal C}(K)$ tel que
${\mathcal C}(K)/E$ soit réflexif, alors il résulte de
\cite[Théorème~15.13]{DJT} que ${\mathcal L}(E; F)=\Pi_2(E; F)$ pour
tout espace de Banach $F$ de cotype 2, et en particulier pour tout
espace de Hilbert, donc $E$ est un espace de Hilbert-Schmidt. De même
si $Z$ est un sous-espace réflexif de $\ELL_1(\mu)$ alors l'espace
quotient $\ELL_1(\mu)/Z$ a la "propriété de Grothendieck", c'est à dire
que tout opérateur $u: \ELL_1(\mu)/Z\to \ell_2$ est absolument sommant,
donc à fortiori $2$-sommant, et $\ELL_1(\mu)/Z$ est un espace de
Hilbert-Schmidt. Ainsi l'algèbre de Banach $H^{\infty}(\DD)$ des
fonctions holomorphes bornées sur le disque unité ouvert $\DD$, ainsi
que l'algèbre du disque ${\mathcal A}(\DD)$ formée des fonctions
holomorphes sur $\DD$ admettant un prolongement continu au disque
unité fermé sont en tant qu'espaces de Banach des espaces de
Hilbert-Schmidt.

\noindent  
Par contre aucun espace $K$-convexe ne peut être un espace
de Hilbert-Schmidt, voir \cite[page 443]{DJT}, ce qui signifie qu'aucun
espace de type non trivial ne peut être un espace de
Hilbert-Schmidt, d'après un célèbre théorème de Pisier
\cite{Pis2} ou \cite[Théorème~13.3]{DJT}.

\begin{definition}\label{def:PS2} On dit qu'un opérateur $u \in \BOUNDED(E;F)$ est pré-Hilbert-Schmidt si $w\circ u \circ v$ est un opérateur de
  Hilbert-Schmidt pour tout couple d'opérateurs bornés $v \in
  \BOUNDED(H_1; E)$ et $w \in \BOUNDED(F;H_2)$, où $H_1$ et $H_2$
  désignent des espaces de Hilbert quelconques.  L'ensemble des
  opérateurs pré Hilbert-Schmidt de $E$ dans $F$ sera noté $\PHS(E;
  F)$.
\end{definition}

\noindent Il est clair que $u \in \PHS(E; F)$ si et seulement si $u^*
\in \PHS(F^*;E^*)$, que $\PHS(E; F)=\BOUNDED(E; F)$ si $E$ ou $F$ est un
espace espace de Hilbert-Schmidt, et que $\PHS(H_1;H_2)= \HS(H_1;H_2)$
si $H_1$ et $H_2$ sont des espaces de Hilbert. L'observation suivante
est une reformulation d'un résultat bien connu, voir \cite[page
50]{jvn}.

\begin{proposition} \label{prop:phs-pi2}
Soit $E$ un espace de Banach, et $H$ un espace de Hilbert. Alors $\PHS(E;H)=\Pi_2(E;H)$.
\end{proposition}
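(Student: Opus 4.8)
The plan is to establish the two inclusions separately; both are short once one uses the dictionary between weakly $2$-summable sequences and bounded operators out of $\ell_2$, combined with the fact that the $2$-summing and Hilbert--Schmidt classes coincide between Hilbert spaces.

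First I would prove $\Pi_2(E;H) \subseteq \PHS(E;H)$. Let $u \in \Pi_2(E;H)$ and fix arbitrary bounded operators $v \in \BOUNDED(H_1;E)$ and $w \in \BOUNDED(H;H_2)$, where $H_1$ and $H_2$ are Hilbert spaces. By the ideal property of the class of $2$-summing operators, the composition $w \circ u \circ v$ lies in $\Pi_2(H_1;H_2)$. Since $H_1$ and $H_2$ are Hilbert spaces, Corollaire~\ref{coro:sur-Hilbert-p-sommant-egal-HS} identifies $2$-summing operators between them with Hilbert--Schmidt operators, so $w \circ u \circ v \in \HS(H_1;H_2)$. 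As $v$ and $w$ were arbitrary, this gives $u \in \PHS(E;H)$ by Définition~\ref{def:PS2}.

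For the reverse inclusion $\PHS(E;H) \subseteq \Pi_2(E;H)$, I would verify the defining property of a $2$-summing operator (Définition~\ref{def:p-summing}) directly. Take any $(x_n)_{n\ge 1} \in \ell_2^\FAIBLE(E)$ and define $v \in \BOUNDED(\ell_2;E)$ by $v(e_n) = x_n$ on the canonical orthonormal basis $(e_n)$ of $\ell_2$; as recalled several times above, $v$ is then bounded with $\norm{v} = \norm{(x_n)}_{\ell_2^\FAIBLE(E)}$. Choosing $H_1 = \ell_2$, $H_2 = H$ and $w = \mathrm{id}_H$ in Définition~\ref{def:PS2}, the hypothesis $u \in \PHS(E;H)$ forces $u \circ v \colon \ell_2 \to H$ to be Hilbert--Schmidt. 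Consequently $\sum_n \norm{ux_n}^2 = \sum_n \norm{(u\circ v)e_n}^2 < +\infty$, that is, $(ux_n)_{n\ge 1} \in \ell_2(H)$. Since $(x_n)_{n\ge 1}$ was an arbitrary element of $\ell_2^\FAIBLE(E)$, the operator $u$ is $2$-summing.

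I do not expect a genuine obstacle here: the whole content is the passage through the operator $v$ attached to a weakly $2$-summable sequence, together with the Hilbert-space coincidence of the $2$-summing and Hilbert--Schmidt classes. The only point worth a word is that $u$ is automatically bounded (this is built into the definition of $\PHS(E;H)$), so that the sequential formulation of Définition~\ref{def:p-summing} applies; equivalently, one recognises the Hilbert--Schmidt property of $u\circ v$ via condition~\ref{item:HS5} of Théorème~\ref{thm:HS}, which is precisely the statement that $u\circ v$ maps $\ell_2^\FAIBLE(\ell_2)$ into $\ell_2(H)$.
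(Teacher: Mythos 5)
Your proof is correct and is essentially the paper's own argument: the inclusion $\Pi_2(E;H)\subseteq\PHS(E;H)$ via the ideal property of $\Pi_2$ and Corollaire~\ref{coro:sur-Hilbert-p-sommant-egal-HS}, and the converse by attaching to each $(x_n)_{n\ge 1}\in\ell_2^\FAIBLE(E)$ the bounded operator $v\colon\ell_2\to E$ with $v(e_n)=x_n$, taking $w=\mathrm{id}_H$ in Définition~\ref{def:PS2}, and reading off $\sum_n\Vert ux_n\Vert^2=\sum_n\Vert (u\circ v)e_n\Vert^2<+\infty$. The only cosmetic difference is that the paper verifies the boundedness of $v$ by the Cauchy--Schwarz estimate on partial sums, whereas you cite it as already recalled; both are fine.
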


\begin{proof} Par le
  corollaire~\ref{coro:sur-Hilbert-p-sommant-egal-HS} on a $\Pi_2(E;H)
  \subset \PHS(E;H)$. Soit $u\in \PHS(E;H)$, et soit $(x_n)_{n\ge
    1}\in \ell_2^{\FAIBLE}(E)$. On a, pour $N\ge M\ge 1$, et une suite
  $(\lambda_n)_{n\ge 1} \in \ell_2$,
\[
    \Bigl \Vert \sum \limits_{n=M}^N \lambda_nx_n \Bigr \Vert 
=   \sup_{x^* \in B_{E^*}} \Bigl | \sum \limits_{n=M}^N \lambda_n\langle x_n,x^* \rangle  \Bigr | 
\le \Bigl ( \sum \limits_{n=M}^N\lambda_n^2\Bigr )^{\demi}
    \left \Vert (x_n)_{n\ge 1}\right \Vert_{\ell_2^{faible}(E)}.
\]
la série $\sum_n \lambda_nx_n$ converge donc dans
$E$, et on obtient un opérateur borné $w: \ell_2\to E$ en posant
$u(\lambda_n)_{n\ge 1}=\UU \lambda_nx_n$ pour $(\lambda_n)_{n\ge 1}\in
\ell_2$. Soit $i \in \BOUNDED(H)$ l' application identité, et soit $(e_n)_{n\ge 1}$
la base hilbertienne naturelle de $\ell_2$. Alors $w\circ u=w \circ u
\circ i \in \HS(E;H)$ et on a
\[
\sum \limits _{n=1}^{\infty}\Vert ux_n\Vert^2 =\sum
\limits_{n=1}^{\infty} \Vert (w \circ u)e_n\Vert^2 =\Vert w\circ u
\Vert_{\HS(\ell_2;H)}<+\infty,
\]
ce qui montre que $u$ est $2$-sommant.
\end{proof}

\noindent Comme $u \in \PHS(E; F)$ si et seulement si $u^* \in \PHS(F^*;E^*)$, on a le corollaire suivant.
\begin{corollary} Soit $F$ un espace de Banach, et soit $H$ un espace
  de Hilbert. Alors $u\in \PHS(H;F)$ si et seulement si $u^*:F^*\to H$
  est $2$-sommant. En particulier si $u \in \gamma^{\infty}(H;F)$,
  alors $u^*$ est \ $2$-sommant.
\end{corollary}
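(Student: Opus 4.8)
The plan is to deduce both assertions from Proposition~\ref{prop:phs-pi2} together with the duality property $u\in\PHS(H;F)\iff u^*\in\PHS(F^*;H^*)$ recorded just above its statement. First I would use that, since $H$ is a Hilbert space, so is its dual $H^*$, which may be identified isometrically with $H$. By the duality property, membership $u\in\PHS(H;F)$ is equivalent to $u^*\in\PHS(F^*;H^*)$. Because the target $H^*$ is Hilbertian, Proposition~\ref{prop:phs-pi2} applies with the Banach space $E=F^*$ and gives $\PHS(F^*;H^*)=\Pi_2(F^*;H^*)$. Reading $H^*$ as $H$, this says exactly that $u\in\PHS(H;F)$ if and only if $u^*:F^*\to H$ is $2$-summing, which is the first assertion.

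For the ``in particular'' clause I would establish the inclusion $\gamma^{\infty}(H;F)\subset\PHS(H;F)$ and then invoke the equivalence just proved. Given $u\in\gamma^{\infty}(H;F)$ and arbitrary bounded operators $v\in\BOUNDED(H_1;H)$ and $w\in\BOUNDED(F;H_2)$ with $H_1,H_2$ Hilbert, the ideal property of $\gamma^{\infty}$ yields $w\circ u\circ v\in\gamma^{\infty}(H_1;H_2)$. On a pair of Hilbert spaces one has the identity $\gamma^{\infty}(H_1;H_2)=S_2(H_1;H_2)$, so $w\circ u\circ v$ is Hilbert-Schmidt; since $v$ and $w$ are arbitrary this is precisely the statement $u\in\PHS(H;F)$. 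The first part of the corollary then forces $u^*$ to be $2$-summing.

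The argument is essentially bookkeeping, so there is no serious obstacle; the two points that require a little care are the identification of $H^*$ with $H$ (one must check that $2$-summability of $u^*:F^*\to H^*$ transfers to that of $u^*:F^*\to H$, which is immediate as the identification is an isometric isomorphism) and the appeal to the Hilbert-space identity $\gamma^{\infty}(H_1;H_2)=S_2(H_1;H_2)$, which is the substantive input turning the composition $w\circ u\circ v$ from merely $\gamma$-summing into genuinely Hilbert-Schmidt. Everything else reduces to a direct application of the two cited results.
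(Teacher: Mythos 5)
Your proof is correct and follows the same route as the paper: the first assertion is exactly the paper's one-line deduction from the duality $u\in\PHS(E;F)\iff u^*\in\PHS(F^*;E^*)$ combined with Proposition~\ref{prop:phs-pi2} applied to $E=F^*$, and your treatment of the ``en particulier'' clause (ideal property of $\gamma^\infty$ plus the identity $\gamma^{\infty}(H_1;H_2)=S_2(H_1;H_2)$ from the preceding proposition) is precisely the inclusion $\gamma^{\infty}(H;F)\subset\PHS(H;F)$ that the paper leaves implicit.
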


\noindent En utilisant le théorème 4.20 de \cite{DJT} on obtient une
caractérisation des espaces de Hilbert-Schmidt mentionnée dans les
Notes du Chapitre 19 de \cite{DJT}.

\begin{theorem}\label{thm:char-PHS}  Soit $E$ un espace de Banach. Alors les conditions
  suivantes sont équivalentes.
  \begin{aufzi}
\item \label{item:char-PHS-a} $E$ est un espace de Hilbert-Schmidt.
\item \label{item:char-PHS-b} $u^*$ est $2$-sommant pour tout espace de Banach $F$ et pour    tout opérateur $2$-sommant $u \in \BOUNDED(F; E)$.
\item \label{item:char-PHS-c} $\BOUNDED(E;H)=\Pi_2(E;H)$ pour tout espace de Hilbert $H$.
\item \label{item:char-PHS-d} $u$ est $2$-sommant pour tout espace de Banach $F$ et pour tout  $u\in \BOUNDED(E; F)$ tel que $u^*$ est 2-sommant.
\item \label{item:char-PHS-e} $\BOUNDED(E^*;H)=\Pi_2(E^*;H)$ pour tout espace de Hilbert $H$.
\end{aufzi}
\end{theorem}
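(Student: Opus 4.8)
The plan is to make the two Hilbertian‑target conditions \ref{item:char-PHS-c} and \ref{item:char-PHS-e} the backbone of the argument, to tie them to \ref{item:char-PHS-a} via the self‑duality of the class $\ESPACEHS$, and to reach the adjoint conditions \ref{item:char-PHS-b} and \ref{item:char-PHS-d} by Pietsch factorisation. First I would prove \ref{item:char-PHS-a}$\Leftrightarrow$\ref{item:char-PHS-c}. For \ref{item:char-PHS-c}$\Rightarrow$\ref{item:char-PHS-a}: if every $\beta\in\BOUNDED(E;H)$ is $2$-summing and $T=\beta\circ\alpha\colon H_1\to H_2$ factors through $E$ with $\alpha\in\BOUNDED(H_1;E)$, then $T$ is $2$-summing by the ideal property, hence Hilbert--Schmidt by Corollary~\ref{coro:sur-Hilbert-p-sommant-egal-HS}, so $E\in\ESPACEHS$. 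Conversely, if $E\in\ESPACEHS$ and $b\in\BOUNDED(E;H)$, then for all $v\in\BOUNDED(H_1;E)$ and $w\in\BOUNDED(H;H_2)$ the operator $w\circ b\circ v$ factors through $E$ and is therefore Hilbert--Schmidt; thus $b\in\PHS(E;H)=\Pi_2(E;H)$ by Proposition~\ref{prop:phs-pi2}.

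The equivalence \ref{item:char-PHS-a}$\Leftrightarrow$\ref{item:char-PHS-e} then follows immediately by applying the equivalence just proved to $E^*$, since $E\in\ESPACEHS$ if and only if $E^*\in\ESPACEHS$; in particular \ref{item:char-PHS-c} and \ref{item:char-PHS-e} become equivalent. Next I would treat \ref{item:char-PHS-c}$\Rightarrow$\ref{item:char-PHS-d}. Given $u\in\BOUNDED(E;F)$ with $u^*$ $2$-summing, factor $u^*$ through a Hilbert space by Corollary~\ref{cor:pietsch}: $u^*=b\circ a$ with $a\in\BOUNDED(F^*;\ELL_2(\mu))$ and $b\in\BOUNDED(\ELL_2(\mu);E^*)$. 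Passing to adjoints, $u^{**}=a^*\circ b^*$ with $b^*\in\BOUNDED(E^{**};\ELL_2(\mu))$; restricting along the canonical embedding $\kappa_E\colon E\to E^{**}$, the operator $b^*\circ\kappa_E\colon E\to\ELL_2(\mu)$ is $2$-summing by \ref{item:char-PHS-c}. Since $\kappa_F\circ u=a^*\circ b^*\circ\kappa_E$ and $\kappa_F$ is isometric, the ideal property gives $u\in\Pi_2(E;F)$. The dual argument yields \ref{item:char-PHS-e}$\Rightarrow$\ref{item:char-PHS-b}: factor a $2$-summing $u\in\BOUNDED(F;E)$ as $u=b\circ a$ with $b\in\BOUNDED(\ELL_2(\mu);E)$, whence $u^*=a^*\circ b^*$ with $b^*\in\BOUNDED(E^*;\ELL_2(\mu))$ $2$-summing by \ref{item:char-PHS-e}, so $u^*\in\Pi_2$ by the ideal property.

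The remaining, and genuinely harder, implications are the converses \ref{item:char-PHS-d}$\Rightarrow$\ref{item:char-PHS-c} and \ref{item:char-PHS-b}$\Rightarrow$\ref{item:char-PHS-e}, which close the chain of equivalences. The obstacle is that \ref{item:char-PHS-d} and \ref{item:char-PHS-b} only control operators carrying the \emph{a priori} constraint that some adjoint be $2$-summing, whereas \ref{item:char-PHS-c} and \ref{item:char-PHS-e} must handle \emph{every} operator into a Hilbert space; removing that constraint is precisely where the content sits, and the tempting trick of composing an arbitrary $b\in\BOUNDED(E;H)$ with a fixed Hilbert--Schmidt multiplier $\theta$ fails, since $\theta\circ b$ being $2$-summing does not recover $b$. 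This is the step that calls for the duality theorem for $2$-summing operators, namely Theorem~4.20 of \cite{DJT}. I would invoke it together with the bidual stability $u\in\Pi_2\Leftrightarrow u^{**}\in\Pi_2$, which via the transfer ``\ref{item:char-PHS-b} for $X^*$ $\Rightarrow$ \ref{item:char-PHS-d} for $X$'' and its dual allows one of the two reverse implications to be deduced from the other, so that a single application of the deep duality result suffices. I expect this duality input to be the main obstacle, the elementary reductions above being routine consequences of Pietsch's factorisation (Corollary~\ref{cor:pietsch}), the ideal property, and the self-duality of $\ESPACEHS$.
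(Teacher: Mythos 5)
Your proposal is correct and is essentially the paper's own proof: the equivalence \ref{item:char-PHS-a}$\Leftrightarrow$\ref{item:char-PHS-c} is argued exactly as in the text (factorisation through $E$ plus the ideal property in one direction, $\BOUNDED(E;H)=\PHS(E;H)=\Pi_2(E;H)$ via Proposition~\ref{prop:phs-pi2} in the other), while the genuinely hard implications are in the end delegated to Theorem~4.20 of \cite{DJT} --- the very citation the paper uses, that theorem being precisely the equivalence of \ref{item:char-PHS-b}, \ref{item:char-PHS-c}, \ref{item:char-PHS-d} and \ref{item:char-PHS-e}. Your additional elementary reductions (\ref{item:char-PHS-a}$\Leftrightarrow$\ref{item:char-PHS-e} by self-duality of the class $\ESPACEHS$, and \ref{item:char-PHS-c}$\Rightarrow$\ref{item:char-PHS-d}, \ref{item:char-PHS-e}$\Rightarrow$\ref{item:char-PHS-b} by Pietsch factorisation, biduality and the ideal property) are correct but are subsumed by that single citation, so the vaguer ``transfer'' argument sketched at the end is not actually needed.
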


\begin{proof} L'équivalence de
  \ref{item:char-PHS-b}--\ref{item:char-PHS-e} est donnée par le
  théorème 4.20 de \cite{DJT}, et il est clair que si
  \ref{item:char-PHS-c} est vérifié alors tout opérateur $u:H_1\to
  H_2$ qui factorise à travers $E$ est $2$-sommant, donc de
  Hilbert-Schmidt. Ainsi, $E$ est un espace de
  Hilbert-Schmidt. Réciproquement si $E$ est un espace de
  Hilbert-Schmidt alors pour tout espace de Hilbert $H$,
  $\BOUNDED(E;H) = {\PHS}(E;H)$ et par la
  proposition~\ref{prop:phs-pi2}, $\PHS(E;H) = \Pi_2(E;H)$, donc
  \ref{item:char-PHS-c} est vérifié.
\end{proof}

\begin{theorem} 
  Soit $E$ un espace de Banach, et soit $F$ un espace de Banach de
  type 2. Alors $\PHS(E; F)=\gamma(E; F)$.
\end{theorem}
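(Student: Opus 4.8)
The plan is to prove the two inclusions separately, and to observe that the type~2 hypothesis is only needed for $\PHS(E;F)\subseteq\gamma(E;F)$: the reverse inclusion holds for arbitrary $F$.

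For $\gamma(E;F)\subseteq\PHS(E;F)$, I would first record that the class $\gamma(\cdot\,;\cdot)$ enjoys the ideal property. Indeed, if $u\in\gamma(E;F)$, $v\in\BOUNDED(X;E)$ and $w\in\BOUNDED(F;Y)$, then for any $(\xi_n)\in\ell_2^\FAIBLE(X)$ the sequence $(v\xi_n)$ lies in $\ell_2^\FAIBLE(E)$, so $\sum_n\gamma_n uv\xi_n$ converges in $\ELL_2(\Omega;F)$; applying the continuous map $w$ shows $\sum_n\gamma_n wuv\xi_n$ converges in $\ELL_2(\Omega;Y)$, i.e. $w\circ u\circ v\in\gamma(X;Y)$. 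Now given $u\in\gamma(E;F)$ and arbitrary Hilbert spaces $H_1,H_2$ together with $v\in\BOUNDED(H_1;E)$ and $w\in\BOUNDED(F;H_2)$, the ideal property gives $w\circ u\circ v\in\gamma(H_1;H_2)$, which equals $\HS(H_1;H_2)$ by the proposition identifying $\gamma$-radonifying and Hilbert--Schmidt operators between Hilbert spaces. Hence $w\circ u\circ v$ is Hilbert--Schmidt and $u\in\PHS(E;F)$.

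For the harder inclusion $\PHS(E;F)\subseteq\gamma(E;F)$, I would take $u\in\PHS(E;F)$ and a sequence $(x_n)\in\ell_2^\FAIBLE(E)$, and show that $\sum_n\gamma_n ux_n$ converges in $\ELL_2(\Omega;F)$. As in the proof of Proposition~\ref{prop:phs-pi2}, the assignment $v(e_n)=x_n$ defines $v\in\BOUNDED(\ell_2;E)$ with $\norm{v}=\norm{(x_n)}_{\ell_2^\FAIBLE(E)}$. Set $T=u\circ v\in\BOUNDED(\ell_2;F)$. Since $\PHS$ is a two-sided operator ideal (if $a:H_1\to\ell_2$ and $b:F\to H_2$ are bounded then $b\circ T\circ a=b\circ u\circ(v a)$ is Hilbert--Schmidt, because $u\in\PHS(E;F)$ and $v a\in\BOUNDED(H_1;E)$), we obtain $T\in\PHS(\ell_2;F)$. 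By the corollary to Proposition~\ref{prop:phs-pi2}, this is equivalent to $T^*:F^*\to\ell_2$ being $2$-sommant. This is where type~2 enters: Corollaire~\ref{cor:u-ast-2-sommant-u-gamma}, applied to $T:\ell_2\to F$ with $F$ of type~2 and $T^*$ $2$-summing, yields $T\in\gamma(\ell_2;F)$. In particular $\sum_n\gamma_n Te_n=\sum_n\gamma_n ux_n$ converges in $\ELL_2(\Omega;F)$, and since $(x_n)$ was arbitrary, $u\in\gamma(E;F)$.

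The main obstacle is the second inclusion, and the crux is the chain that converts the factorization-stability defining $\PHS$ into a $2$-summing condition on the adjoint (via $T\in\PHS(\ell_2;F)\Leftrightarrow T^*\in\Pi_2(F^*;\ell_2)$), which makes the type~2 result Corollaire~\ref{cor:u-ast-2-sommant-u-gamma} available. The type~2 hypothesis is genuinely needed: the remark following Corollaire~\ref{cor:u-ast-2-sommant-u-gamma} exhibits, for $F$ not of type~2, an operator $u:\ell_2\to F$ whose adjoint is $2$-summing but which fails to be $\gamma$-radonifying, so the implication $T^*\in\Pi_2\Rightarrow T\in\gamma$ breaks down without it.
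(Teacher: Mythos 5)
Your proof is correct and follows essentially the same route as the paper's: the inclusion $\gamma(E;F)\subset\PHS(E;F)$ via the ideal property together with the identification of $\gamma$-radonifying and Hilbert--Schmidt operators between Hilbert spaces, and the converse by reducing to $T=u\circ v\in\BOUNDED(\ell_2;F)$ built from $(x_n)\in\ell_2^\FAIBLE(E)$, converting the pré-Hilbert-Schmidt hypothesis into $T^*\in\Pi_2(F^*;\ell_2)$ (Proposition~\ref{prop:phs-pi2} and its corollary) and then invoking Corollaire~\ref{cor:u-ast-2-sommant-u-gamma}, which is where type~2 enters. The only cosmetic difference is that you pass through $\PHS(\ell_2;F)$ and dualize afterwards, whereas the paper dualizes $u$ first ($u^*\in\PHS(F^*;E^*)$) and then composes with $v^*$; the key lemmas and the structure of the argument are identical.
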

\begin{proof}
  Par le corollaire~\ref{coro:sur-Hilbert-p-sommant-egal-HS} et en
  remarquant que $\gamma(H) = \ell_2(H)$ on a $\gamma(E; F)\subset
  \PHS(E; F)$. Soit maintenant $u \in \PHS(E; F)$, et soit $v \in
  \BOUNDED(\ell_2;E)$. On a $u^*\in \PHS(F^*;E^*)$, donc $(u\circ
  v)^*=v^*\circ u^* \in \Pi_2(F^*;\ell_2)$ par la
  proposition~\ref{prop:phs-pi2}. Il résulte alors du
  corollaire~\ref{cor:u-ast-2-sommant-u-gamma} que $u\circ v: l^2 \to
  F$ est $\gamma$-radonifiant.

  \noindent Soit $(x_n)_{n\ge 1}\in \ell_2^{\FAIBLE}(E)$. De même que plus
  haut, on voit qu'il existe $v \in \BOUNDED(\ell_2;E)$ tel que
  $we_n=x_n$ pour tout $n\ge 1$, $\,(e_n)_{n\ge 1}$ désignant la base
  hilbertienne naturelle de $\ell_2$. Soit $(\gamma_n)_{n\ge 1}\subset
  \ELL_2(\Omega, {\mathbb P})$ une suite gaussienne. Alors la série $\sum
  _n \gamma_nx_n=\sum _n
  \gamma_n(u\circ v)e_n$ est convergente dans $\ELL_2(\Omega,{\mathbb
    P})$, ce qui montre que $u \in \gamma(E; F)$.
\end{proof}

\noindent Si $U \subset \BOUNDED(E; F)$, on pose $U^* \defequal \{ u^*\}_{u \in U}$.
Comme $\PHS(E; F)^*=\PHS(F^*;E^*)$, on obtient le corollaire suivant.

\begin{corollary} 
  Soit $E$ un espace de Banach tel que $E^*$ soit de type 2. Alors
  pour tout espace de Banach $F$ on a $\PHS(E; F)^*=\gamma(F^*;E^*)$.
\end{corollary}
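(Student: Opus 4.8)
Le plan est d'obtenir ce corollaire comme conséquence purement formelle du théorème précédent et de la relation de dualité rappelée juste avant l'énoncé. Je partirais de l'égalité $\PHS(E; F)^* = \PHS(F^*; E^*)$, elle-même conséquence immédiate de l'équivalence $u \in \PHS(E; F) \Leftrightarrow u^* \in \PHS(F^*; E^*)$ déjà établie. Le problème se réduit ainsi à montrer que $\PHS(F^*; E^*) = \gamma(F^*; E^*)$.

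Pour cela, j'appliquerais le théorème précédent, qui affirme que $\PHS(X; Y) = \gamma(X; Y)$ dès que l'espace d'arrivée $Y$ est de type 2. En prenant $X = F^*$ et $Y = E^*$, et en utilisant l'hypothèse que $E^*$ est de type 2, on obtient directement $\PHS(F^*; E^*) = \gamma(F^*; E^*)$.

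La conclusion résulte alors de la chaîne d'égalités $\PHS(E; F)^* = \PHS(F^*; E^*) = \gamma(F^*; E^*)$. Il n'y a pas ici d'obstacle véritable: l'unique point auquel il faut prendre garde est de bien faire porter l'hypothèse de type 2 sur l'espace d'arrivée $E^*$ lors de l'application du théorème précédent, ce qui justifie d'ailleurs que l'hypothèse soit formulée sur $E^*$ et non sur $E$. Tout le contenu non trivial est en réalité concentré dans le théorème précédent, dont la démonstration repose elle-même sur la proposition~\ref{prop:phs-pi2} et sur le corollaire~\ref{cor:u-ast-2-sommant-u-gamma}.
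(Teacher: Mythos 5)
Votre démonstration est correcte et suit exactement la même démarche que l'article: celui-ci déduit le corollaire de l'identité $\PHS(E;F)^*=\PHS(F^*;E^*)$ (énoncée juste avant) combinée au théorème précédent appliqué avec l'espace d'arrivée $E^*$ de type 2. Vous ne faites qu'expliciter cette chaîne d'égalités, en soulignant à juste titre que l'hypothèse de type 2 doit porter sur l'espace d'arrivée $E^*$.
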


\noindent Comme les espaces $\ell_p$ sont de type $\min(2, p)$, on obtient le
corollaire suivant.

\begin{corollary} Soient $E, F$ des espaces de Banach quelconques. Alors pour
$p \ge 2$ on a $\PHS(E;\ell_p)=\gamma(E;\ell_p)$ et pour $p\in [1,2];$ on a
    $\PHS(\ell_p;F)^*=\gamma(F^*;\ell_{p*})$
\end{corollary}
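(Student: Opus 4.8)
The plan is to read off both identities from the two results just established, using only the fact recalled in Section~4 that $\ell_p$ is of type $\min(2,p)$ for $1\le p<\infty$.

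For the first assertion, fix $p\ge 2$. Then $\min(2,p)=2$, so $\ell_p$ is of type $2$, and applying the preceding Theorem (which gives $\PHS(E;F)=\gamma(E;F)$ for every $F$ of type $2$) with $F=\ell_p$ yields $\PHS(E;\ell_p)=\gamma(E;\ell_p)$ for an arbitrary Banach space $E$. Nothing more is needed here: the hypothesis on $F$ is satisfied, and $E$ stays completely general.

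For the second assertion I would instead invoke the preceding Corollary (stating $\PHS(E;F)^*=\gamma(F^*;E^*)$ whenever $E^*$ is of type $2$), specialised to $E=\ell_p$, so that $E^*=\ell_{p^*}$. The only hypothesis to verify is that $\ell_{p^*}$ is of type $2$: for $p\in(1,2]$ one has $p^*\in[2,\infty)$ and $\min(2,p^*)=2$, so $\ell_{p^*}$ is indeed of type $2$. Substituting $E^*=\ell_{p^*}$ into the conclusion of that Corollary gives $\PHS(\ell_p;F)^*=\gamma(F^*;\ell_{p^*})$ for every Banach space $F$.

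The step requiring real care, and the one I expect to be the genuine obstacle, is the endpoint $p=1$: there $p^*=\infty$ and $\ell_\infty$ has no nontrivial type, so the Corollary does not apply and the second identity should be restricted to $p\in(1,2]$. Indeed, for $p=1$ one has $\PHS(\ell_1;F)^*=\PHS(F^*;\ell_\infty)=\BOUNDED(F^*;\ell_\infty)$, since $\ell_\infty$ is a Hilbert--Schmidt space; taking $F=\ell_2$ and the Linde--Pietsch operator $T\in\BOUNDED(\ell_2;c_0)\subset\BOUNDED(\ell_2;\ell_\infty)$ that is $\gamma$-summing but not $\gamma$-radonifying (Section~3), one sees (using that $c_0$ is closed in $\ell_\infty$, so that a Gaussian series with terms in $c_0$ converging in $L^2(\Omega;\ell_\infty)$ would already converge in $L^2(\Omega;c_0)$) that $\BOUNDED(\ell_2;\ell_\infty)\neq\gamma(\ell_2;\ell_\infty)$. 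Hence the claimed equality genuinely fails at $p=1$, and the correct range for the second assertion is $p\in(1,2]$.
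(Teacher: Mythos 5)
Your treatment of the two main cases is exactly the paper's proof: the corollary is deduced, just as you do it, from the preceding Theorem ($\PHS(E;F)=\gamma(E;F)$ when $F$ has type $2$) and the preceding Corollary ($\PHS(E;F)^*=\gamma(F^*;E^*)$ when $E^*$ has type $2$), the paper's entire argument being the one-line remark that $\ell_p$ has type $\min(2,p)$. Where you diverge is the endpoint $p=1$, and there you are right and the printed statement is not: for $p=1$ one has $\ell_{p^*}=\ell_\infty$, which has only trivial type, so the paper's own argument does not cover this case (a point the paper passes over silently by writing $p\in[1,2]$), and the equality genuinely fails. Your counterexample is sound, with one small point to make explicit: since the paper defines $U^*$ as the set of adjoints $\{u^*\}_{u\in U}$, the identification $\PHS(\ell_1;F)^*=\BOUNDED(F^*;\ell_\infty)$ requires not only that $\ell_1$ (or $\ell_\infty$) be a Hilbert--Schmidt space, so that $\PHS(\ell_1;F)=\BOUNDED(\ell_1;F)$, but also that every bounded operator $F^*\to\ell_\infty$ be an adjoint; for your choice $F=\ell_2$ this holds because $\ell_2$ is reflexive (alternatively, one checks that your witness is itself an adjoint). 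Granting that, the Linde--Pietsch operator followed by $c_0\hookrightarrow\ell_\infty$ is bounded but not $\gamma$-radonifying, and your closedness argument for $L^2(\Omega;c_0)$ inside $L^2(\Omega;\ell_\infty)$ is correct. A still simpler witness: the inclusion $\ell_2\hookrightarrow\ell_\infty$, which is the adjoint of the bounded inclusion $\ell_1\hookrightarrow\ell_2$, is not even $\gamma$-summing, since for the weakly $2$-summable sequence $(e_n)_{n\ge 1}$ one has $\EE\bigl\Vert\sum_{n=1}^N\gamma_n e_n\bigr\Vert_{\ell_\infty}^2=\EE\max_{1\le n\le N}\gamma_n^2$, which grows like $2\log N$. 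So your restriction of the second identity to $p\in(1,2]$ is the correct form of the statement, and your proof of it coincides with the paper's.
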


\noindent Notons que le corollaire permet de décrire concrètement
les éléments de diagonaux $\PHS(\ell_p;\ell_q)$ si $1 \le p \le 2$, ou
si $2\le q <+\infty$. En effet des calculs effectués par Linde et
Pietsch dans \cite{lp,Pietsch}, complétés par des calculs de Maurey
mentionnés dans \cite{lp}, donnent une description des opérateurs
diagonaux $\gamma$-sommants $u_{\sigma}: (x_n)_{n\ge 1}\to
(\sigma_nx_n)_{n\ge 1}\in \BOUNDED(\ell_p;\ell_q)$ associés à une
suite $\sigma=(\sigma_n)_{n\ge 1}$ : on obtient le tableau suivant,
qui caractérise les suites $\sigma$ telles que $u_{\sigma} \in
\gamma(\ell_p;\ell_q)$. et dans certains cas les suites $\sigma$
telles que $u_{\sigma} \in \PHS(\ell_p; \ell_q)$

\begin{center} 
  \begin{tabular}{| l| c| r |}\hline $p$ & $q$
    &$u_{\sigma}\in \gamma(\ell_p;\ell_q)$ \cr \hline $1\le p < 2 $&$
    1\le q < \nicefrac{2p}{2-p}$ & $\sigma \in \ell_r, \nicefrac{1}{r}= \nicefrac{1}{2}-\nicefrac{1}{p} +\nicefrac{1}{ q}$ \cr \hline $1\le p < 2$ &$ q \ge
    \nicefrac{2p}{2-p}$ & $\sigma \in \ell_{\infty}$ \cr \hline $2\le p <
    +\infty$&$ q \ge 1$ & $\sigma \in \ell _q$ \cr
    \hline \end{tabular} 
\end{center}

\medskip

\noindent En appliquant ces résultats à $u_{\sigma}$ et à $u_{\sigma}^*\in
\BOUNDED(\ell_{q^*};\ell _{p^*})$, on peut caractériser les suites
$\sigma$ telles que $u_{\sigma} \in \PHS(\ell_p; \ell_q)$ pour $1\le p
< 2$, $1 \le q <+\infty$ et pour $2 \le p <+\infty, 2 \le q
<+\infty$ (nous renvoyons à \cite{lp,Pietsch} pour le cas o\`u
$p=+\infty$ et/ou $q={+\infty}$).

\medskip

\begin{center} 
  \begin{tabular}{| l| c| r |}\hline $p$ & $q$
    &$u_{\sigma}\in \PHS(\ell_p;\ell_q)$ \cr \hline $1\le p < 2 $&$
    1\le q < \nicefrac{2p}{2-p}$ & $\sigma \in \ell_r, \nicefrac{1}{ r}=\nicefrac{1}{2}-\nicefrac{1}{ p} +\nicefrac{1}{ q}$ \cr \hline $1\le p < 2$ &$ q \ge
    \nicefrac{2p}{ 2-p}$ & $\sigma \in \ell_{\infty}$ \cr \hline $2\le p <
    +\infty$&$ q \ge 2$ & $\sigma \in \ell _q$ \cr
    \hline 
  \end{tabular} 
\end{center}

\medskip

Nous concluons cet cet article par la conjecture suivante, qui est
vérifiée d'après le théorème de factorisation de Pietsch par tout
opérateur $u$ tel que $u$ ou $u^*$ soit $p$-sommant avec $p \le 2.$

\begin{conjecture} 
  Soient $E$ et $F$ deux espaces de Banach. Alors tout opérateur
  pré-Hilbert-Schmidt $u:E\to F$ se factorise à travers un espace
  de Hilbert-Schmidt.
\end{conjecture}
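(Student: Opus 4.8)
The plan is to read the statement as an equality of two operator ideals. Let $\mathcal{F}$ denote the class of operators admitting a factorization through a Hilbert-Schmidt space. The easy inclusion $\mathcal{F}(E;F)\subseteq\PHS(E;F)$ is already implicit in the text: if $u=w\circ v$ with $v\in\BOUNDED(E;Z)$, $w\in\BOUNDED(Z;F)$ and $Z\in\ESPACEHS$, then for all $R\in\BOUNDED(H_1;E)$ and $S\in\BOUNDED(F;H_2)$ the sandwich $S\circ u\circ R=(S\circ w)\circ(v\circ R)$ factors through $Z$ and is therefore Hilbert-Schmidt. Moreover $\mathcal{F}$ is itself an operator ideal: using the characterization $\BOUNDED(Z;H)=\Pi_2(Z;H)$ of Theorem \ref{thm:char-PHS} one checks that $\ESPACEHS$ is stable under finite direct sums, whence $\mathcal{F}$ is stable under addition; and $\mathcal{F}$ is self-dual, since $E\in\ESPACEHS$ iff $E^*\in\ESPACEHS$. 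The conjecture is exactly the reverse inclusion $\PHS\subseteq\mathcal{F}$.

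Second, I would clear the cases already accessible through Pietsch. If $u$ is $p$-summing with $p\le 2$ then $u$ is $2$-summing, and Corollary \ref{cor:pietsch} factors it as $E\xrightarrow{\iota_E}C(K)\xrightarrow{\tilde u\circ j_2}F$; since $C(K)$ is an $\mathcal{L}_\infty$-space, hence lies in $\ESPACEHS$, we get $u\in\mathcal{F}$. If instead $u^*$ is $p$-summing with $p\le 2$, then $u^*\in\mathcal{F}$ by the previous case and $u\in\mathcal{F}$ by self-duality of $\mathcal{F}$. So the whole difficulty is to produce a factorizing Hilbert-Schmidt space for a general $u\in\PHS(E;F)$ for which neither $u$ nor $u^*$ is $p$-summing.

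For the general case I would attempt to build the intermediate space by a Davis--Figiel--Johnson--Pe\l czy\'nski-type interpolation directly from the operator: starting from the convex body $u(B_E)\subset F$ one forms the usual scale of gauge norms and the associated interpolation space $Z=\Delta(u)$, which comes equipped with a canonical factorization $u=w\circ v$, $v\in\BOUNDED(E;Z)$, $w\in\BOUNDED(Z;F)$. The point would then be to feed the pre-Hilbert-Schmidt hypothesis into this construction and to verify, via Theorem \ref{thm:char-PHS}, that $Z\in\ESPACEHS$, i.e. that every $T\in\BOUNDED(Z;H)$ is $2$-summing. One would try to show that $T$ inherits enough summing behaviour from the two-sided Hilbert-space control of $u$ for the criterion \ref{item:char-PHS-c} to apply.

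The hard part will be precisely this last verification, and I do not expect it to go through without a genuinely new idea. The pre-Hilbert-Schmidt hypothesis only constrains compositions $S\circ u\circ R$ in which \emph{both} the source and the target are Hilbert spaces, whereas Theorem \ref{thm:char-PHS} asks us to control operators $T:Z\to H$ whose source is the interpolation space $Z$ itself. There is no Hilbert space standing to the left of $T$ that we could absorb into a sandwich $S\circ u\circ R$, so the hypothesis does not obviously transfer. In the $p$-summing case this gap is closed for free because the factor space is $C(K)$, whose membership in $\ESPACEHS$ comes from Grothendieck's inequality rather than from $u$; for an abstract interpolation space $Z$ no such external input is available, and bridging the passage from two-sided to one-sided Hilbert control is the obstruction that keeps the statement at the level of a conjecture.
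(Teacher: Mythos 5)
You should first be clear about the status of this statement: it is a \emph{conjecture}, and the paper contains no proof of it. The only thing the paper asserts (in the introduction and in the sentence preceding the statement) is that the conjecture is verified, via Pietsch's factorization theorem, for every operator $u$ such that $u$ or $u^*$ is $p$-summing with $p\le 2$. So there is no hidden proof to compare yours against, and your proposal, which also stops at the general case and says so explicitly, ends up in essentially the same position as the paper. Your treatment of the accessible part is correct where it matters: the easy inclusion $\mathcal{F}(E;F)\subseteq\PHS(E;F)$ is exactly the right observation, and the direct Pietsch case is sound ($u$ $p$-summing with $p\le 2$ implies $u$ $2$-summing; Corollary~\ref{cor:pietsch} factors $u$ through $C(K)$, which is an $\mathcal{L}_\infty$-space and hence lies in $\ESPACEHS$). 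Your diagnosis of why the Davis--Figiel--Johnson--Pe\l czy\'nski attack stalls is also the correct one: criterion \ref{item:char-PHS-c} of Theorem~\ref{thm:char-PHS} demands control of \emph{all} operators $Z\to H$, whereas the pre-Hilbert-Schmidt hypothesis only controls compositions $S\circ u\circ R$ with Hilbert spaces on \emph{both} sides, and nothing transfers the two-sided control to the one-sided one. That is precisely the obstruction that keeps the statement at the level of a conjecture.

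There is, however, a genuine gap in the one step you present as routine: the case where $u^*$ is $p$-summing, which you dispatch by ``self-duality of $\mathcal{F}$''. The direction of self-duality you need is not formal. If $u^*=w\circ v$ factors through $Z\in\ESPACEHS$, then dualizing gives $v^*\circ w^*\colon E^{**}\to Z^*\to F^{**}$, and composing with the canonical embedding $\kappa_E\colon E\to E^{**}$ you only obtain that $\kappa_F\circ u$ factors through $Z^*$, where $\kappa_F\colon F\to F^{**}$; the map $v^*$ lands in $F^{**}$, not in $F$. To corestrict you must replace $Z^*$ by the closed subspace $(v^*)^{-1}\bigl(\kappa_F(F)\bigr)$, and the class $\ESPACEHS$ is \emph{not} hereditary for closed subspaces: the closed span of a sequence of independent Gaussians in $L_1[0,1]$ is isometric to $\ell_2$, yet $L_1$ is a Hilbert-Schmidt space while $\ell_2$ is not. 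So ``factors through a Hilbert-Schmidt space'' does not obviously pass from $u^*$ to $u$. A correct treatment of this case uses the specific structure available: $u^*$ $2$-summing gives, by Pietsch, that $u^{**}$ factors through the $\mathcal{L}_1$-space $C(K)^*$, and one then needs the additional (non-trivial) fact that an operator whose second adjoint factors through an $L_1$-space itself factors through an $L_1$-space, along the lines of Kwapie\'n's results on operators factorizable through $L_p$-spaces \cite{Kwap}. Without this (or some substitute), even the special cases your proposal claims to settle are only half settled.
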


\end{document}